\numberwithin{equation}{section}
\let\al=\alpha
\let\b=\beta
\let\g=\gamma
\let\e=\varepsilon
\let\s=\sigma
\let\f=\frac
\let\om=\omega
\let\D=\Delta
\let\ep=\epsilon
\let\Om=\Omega
\let\wt=\widetilde
\let\wh=\widehat
\let\ol=\overline
\let\na=\nabla
\let\th=\theta
\let\pa=\partial
\let\va=\varphi
\let\Th=\Theta
\let\vth=\vartheta
\def\cA{\mathcal{A}}
\def\cF{\mathcal{F}}
\def\cK{\mathcal{K}}
\def\cM{\mathcal{M}}
\def\cN{\mathcal{N}}
\def\eqdef{\buildrel\hbox{\footnotesize def}\over =}
\def\bbT{\mathbb{T}}
\def\bbZ{\mathbb{Z}}
\def\bbR{\mathbb{R}}
\newcommand{\beq}{\begin{equation}}
\newcommand{\eeq}{\end{equation}}
\newcommand{\beqno}{\begin{equation*}}
\newcommand{\eeqno}{\end{equation*}}
\newcommand{\ben}{\begin{eqnarray}}
\newcommand{\een}{\end{eqnarray}}
\newcommand{\beno}{\begin{eqnarray*}}
\newcommand{\eeno}{\end{eqnarray*}}
\newtheorem{theorem}{Theorem}[section]
\newtheorem{lemma}[theorem]{Lemma}
\newtheorem{proposition}[theorem]{Proposition}
\newtheorem{remark}[theorem]{Remark}
\begin{document}
\begin{CJK*}{GBK}{song}

\title[Stability threshold of the Couette flow]{Stability threshold of the Couette flow for Navier-Stokes Boussinesq system with large Richardson number $\g^2>\frac{1}{4}$}
\author{Cuili Zhai}
\address{School of Mathematics and Physics, University of Science and Technology Beijing, 100083, Beijing, P. R. China.}
\email{zhaicuili035@126.com, cuilizhai@ustb.edu.cn}
\author{Weiren Zhao}
\address{Department of Mathematics, New York University Abu Dhabi, Saadiyat Island, P.O. Box 129188, Abu Dhabi, United Arab Emirates.}
\email{zjzjzwr@126.com, wz19@nyu.edu}

\begin{abstract}
In this paper, we study the nonlinear asymptotic stability of the Couette flow in the stably stratified regime, namely the Richardson number $\g^2>\frac{1}{4}$. Precisely, we prove that if the initial  perturbation $(u_{in},\vth_{in})$ of the Couette flow $v_s=(y,0)$ and the linear temperature $\rho_s=-\g^2y+1$ satisfies $\|u_{in}\|_{H^{s+1}}+\|\vth_{in}\|_{H^{s+2}}\leq \ep_0\nu^{\f12}$, then the asymptotic stability holds.
\end{abstract}
\maketitle

\section{Introduction}
The stability of shear flow in a stratified medium is of interest in many fields of research such as fluid dynamics, geophysics, astrophysics, mathematics, etc. Density stratification can strongly affect the dynamic of fluids like air in the atmosphere or water in the ocean and the stability of the question of stratified flows dates back to Taylor \cite{Taylor} and Goldstein \cite{Goldstein}, and since then there has been an active search towards the understanding of the stability of density-stratified flows. 
\subsection{Two dimensional Navier- Stokes Boussinesq equations}
In this paper, we consider the two-dimensional Navier- Stokes Boussinesq system with full dissipation  in $\Om=\bbT\times \bbR$:
 \beq\label{eq: NSB}
 \left\{\begin{array}{l}
\pa_tv+v\cdot\na v-\nu\D v+\na P=-\rho ge_2\\
\pa_t\rho+v\cdot\na\rho-\mu\D \rho=0, \\
\na\cdot v=0,\\
v|_{t=0}=v_{in}(x,y), \ \rho|_{t=0}=\rho_{in}(x,y),
\end{array}\right.
\eeq
where $(x,y)\in\Om$, $v=(v^1,v^2)$ is the velocity field, $P$ is the pressure, $\rho$ is the temperature (density), $g$ is the gravitational constant, $e_2=(0,1)$ is the unit vector in the vertical direction, $\nu$ is the viscosity coefficient and $\mu$ is the thermal diffusivity. In this paper, we focus on the case when $C^{-1}\leq \f{\mu}{\nu}\leq C$ with some constant $C>1$ independent of $\nu$. Thus, the three parameters $\mu,\nu$ and $g$ can be normalized to $\mu=\nu$ and $g=1$.

The system admits a class of  steady states, where the velocity field is the Couette flow and the temperature is a linear function of the vertical component, namely,
\beq\label{steady state}
v_s=(y,0),\quad \rho_s=-\g^2 y+1,\quad p_s=\f12\g^2y^2-y+C_0,
\eeq
where the constant $\g^2$ is the Richardson number \footnote{In general, the Richardson number $\g^2(y)=-\f{g\pa_y\rho_s(y)}{(\pa_yU(y))^2}$ is not always a constant, where $g$ is the gravitational constant, $U(y)$ is the horizontal velocity of the shear flow, and $\rho_s$ is the temperature.}. The Richardson number is one of the control parameters of the stability of stratified shear flows. The Miles-Howard theorem \cite{Howard, Miles} guarantees that any flow in the inviscid non-diffusive limit is linearly stable if the local Richardson number everywhere exceeds the value $\f14$, however, unstable modes can arise when the Richardson number is smaller than $\f14$ \cite{Drazin}.

 In this paper, we study the asymptotic stability of the Couette flow when the Richardson number is greater than $\f14$, namely,
 $$
 \g^2>\f14.
 $$
 
 It is natural to introduce the perturbation. Let $v=u+v_s$, $P=p+p_s$ and $\rho=\g^2\vth+\rho_s$, then $(u,p,\vth)$ satisfies
\begin{equation}\label{eq: u sys}
 \left\{\begin{array}{l}
\pa_tu+y\pa_xu+\Big(
  \begin{array}{ccc}
   u^2\\
    0\\
  \end{array}
\Big)+u\cdot\na u-\nu\D u+\na p=- \Big(\begin{array}{ccc}
   0\\
    \g^2\vth\\
  \end{array}\Big),\\
\pa_t\vth+y\pa_x\vth+u\cdot\na\vth-\nu\D \vth-u^2=0,\ 
\na\cdot u=0,\\
u|_{t=0}=u_{in}(x,y),\ \vth|_{t=0}=\vth_{in}(x,y).
\end{array}\right.
\end{equation}

Let $\om=\na \times u=\pa_xu^2-\pa_yu^1$ be the vorticity, then $(\om,\vth)$ satisfies
\beq\label{eq: vorticity}
 \left\{\begin{array}{l}
\pa_t\om+y\pa_x\om+u\cdot\na \om-\nu\D \om=-\g^2\pa_x\vth,\\
\pa_t\vth+y\pa_x\vth+u\cdot\na\vth-\nu\D\vth-u^2=0,\\
u=\na^{\bot}\psi=(-\pa_y\psi,\pa_x\psi),\quad \D\psi=\om.
\end{array}\right.
\eeq

\subsection{Historical comments}
In the physics literature, there has been a lot of work devoted to the stability of the Couette flow in the linearized stratified inviscid flow. See \cite{Chimonas,Farrell} and the references therein. There are only few mathematically rigorous results.

Lin and Yang \cite{YangLin} studied the linear asymptotic stability of the steady-state \eqref{steady state} for the 2D Euler Boussinesq system. More precisely, they proved that if $\g^2>\f14$, then the solution to the system \eqref{Euler-Boussinesq} with $\nu=0$ satisfies
\begin{align}
(u^1_{\neq}, u^2_{\neq},\om_{\neq},\vth_{\neq})&\lesssim\Big(\langle t\rangle^{-\f12}, \langle t\rangle^{-\f32},\langle t\rangle^{\f12},\langle t\rangle^{-\f12}\Big),
\end{align}
where $P_{\neq}f=f_{\neq}=f-P_0f$ and $P_0f=f_0=\f{1}{2\pi}\int_{\bbT}f(t,x,y)dx$ denote the non-zero mode and the zero mode. See also \cite{Bianchini} for other linear results of general shear flows and \cite{BBZ} for nonlinear results. For the Navier-Stokes Boussinesq system without the heat diffusion ($\nu=1,\ \mu=0$), Masmoudi, Said-Houari and Zhao \cite{MSZ} proved that if the initial perturbations are in Gevrey-$m$ with $1\leq m<3$, then the steady state \eqref{steady state} is asymptotically stable.
For the case $\nu=\mu>0$, in \cite{DWZ,MZZ}, the authors proved the asymptotic stability of the steady-state \eqref{steady state} with $\g^2=0$, if the initial perturbations $(u_{in},\vth_{in})$ satisfy 
\begin{align*}
\|u_{in}\|_{H^2}\leq \e_0\nu^{\f12},\ 
 \|\vth_{in}\|_{H^1}+ \||D_x|^{\f16}\vth_{in}\|_{H^1}\leq \e_1\nu^{\f{11}{12}}, 
\end{align*}
for both the finite channel ($\Om=\bbT\times[-1,1] $) and the infinite channel ($\Om=\bbT\times \bbR$) cases. We also refer to \cite{Zillinger}  for the asymptotic stability result of the Couette flow with a linear temperature.

The mechanism leading to stability is the so-called inviscid damping and enhanced dissipation. Similar phenomena also happen in other fluids systems. One may refer to \cite{BM, IJ, IJ1, MZ} for the inviscid damping results of Euler equations, and to \cite{BVW,CLWZ, LMZ, MZ1,MZ2} for the enhanced dissipation results of Navier-Stokes equations around 2D Couette flow,  and to \cite{BGM1,BGM2,BGM3,WZ,CWZ} for the enhanced dissipation results of Navier-Stokes equations around 3D Couette flow. For other shear flows, one may refer to \cite{LX, IMM, WZZ, LWZ} for Kolmogorov flow and to \cite{ZEW, Z, DL} for Poiseuille flow. 

In this paper, we focus on the small viscosity case and study the stability threshold problem, namely,

{\it{Given norms $\|\cdot\|_{Y_1}$ and $\|\cdot\|_{Y_2}$, find $\al=\al(Y_1,Y_2)$ and $\beta=\beta(Y_1,Y_2)$ such that
\begin{align*}
&\|u_{in}\|_{Y_1}\leq \nu^{\al}\ \text{and} \ \|\vth_{in}\|_{Y_2}\leq \nu^{\beta}\ \Rightarrow \text{stability};\\
&\|u_{in}\|_{Y_1}\gg \nu^{\al}\ \text{or} \ \|\vth_{in}\|_{Y_2}\gg \nu^{\beta}\ \Rightarrow \text{instability}.
\end{align*}}}

\subsection{Main result}
Our main result states as follows:
\begin{theorem}\label{thm}
Let $s\geq 6$ and $\g^2>\f14$. There exist $0<\ep_0=\ep_0(s,\g^2)<1, 0<\nu_0=\nu_0(s,\g^2)<1$, such that for all $0<\nu\leq\nu_0$ and $0<\ep\leq\ep_0$, if  the initial data $(u_{in}, \vth_{in})$ satisfies 
\begin{align*}
\|u_{in}\|_{H^{s+1}}+\|\vth_{in}\|_{H^{s+2}}\leq \ep\nu^{\f12},
\end{align*}
then the solution $(u,\om,\vth)$ of \eqref{eq: vorticity} satisfies
\begin{align}\label{thm: u}
\|P_{\neq}u^1\|_{L^2}+\langle t\rangle\|P_{\neq}u^2\|_{L^2}+\langle t\rangle^{-1}\|P_{\neq}\om\|_{L^2}+\|P_{\neq}\vth\|_{L^2}
&\leq C\ep\nu^{\f12}\langle t\rangle^{-\f12}e^{-c\nu^{\f13}t},\\
\label{zero}
\nu^{-\f14}\|P_0u^1\|_{H^s}+\|P_0\om\|_{H^s}+\nu^{-\f14}\|P_0\vth\|_{H^s}&\leq C\ep\nu^{\f14},\\
\label{w}
\|P_{\neq}\big(|\pa_x|^{\f12}(-\D)^{-\f14}\om\big)(t,x+ty,y)\|_{H^s}&\leq C\ep\nu^{\f12}e^{-c\nu^{\f13}t},\\
\label{th}
\|P_{\neq}\big(|\pa_x|^{\f12}(-\D)^{\f14}\vth\big)(t,x+ty,y)\|_{H^s}&\leq C\ep\nu^{\f12}e^{-c\nu^{\f13}t},
\end{align}
where $C$ and $c$ are the constants independent of $t,\ep$ and $\nu$.
\end{theorem}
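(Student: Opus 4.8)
The plan is to run a bootstrap/continuity argument in a suitably weighted, time-dependent energy functional adapted to the inviscid-damping + enhanced-dissipation mechanism of the stratified Couette flow. First I would pass to the moving frame $X=x-ty$, $Y=y$ (equivalently work with $f(t,x+ty,y)$ as in \eqref{w}--\eqref{th}), which removes the transport term $y\pa_x$ at the cost of introducing the growing coefficient $t$ in the $y$-derivative; in Fourier in $x$ this replaces $\pa_y$ by $\pa_y - t\,ik$. Because the Richardson number exceeds $\f14$, the natural linearized object for the nonzero modes is not $(\om,\vth)$ separately but a combination: introduce the Taylor--Goldstein-type good unknowns by diagonalizing the $2\times 2$ linear system, which heuristically behaves like $\sqrt{-\D}\,$-weighted Klein--Gordon oscillation in the frame variable with ``mass'' $\g^2-\f14>0$. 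The symmetrized variables that appear in \eqref{w} and \eqref{th}, namely $|\pa_x|^{\f12}(-\D)^{-\f14}\om$ and $|\pa_x|^{\f12}(-\D)^{\f14}\vth$, are exactly the ones for which the linear energy is conserved (no growth) modulo dissipation; the powers of $\langle t\rangle$ in \eqref{thm: u} then come from translating these back to $u^1,u^2,\om$ via the elliptic relations $u=\na^\perp\D^{-1}\om$ in the moving frame, where $\D$ carries the $\langle t\rangle^2$ weight.

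Second, I would build the energy functional $E(t)$ as a sum of: (i) an $H^s$-type norm of the symmetrized nonzero-mode variables multiplied by a Fourier multiplier $e^{2c\nu^{1/3}t}$ (to extract the stated enhanced-dissipation rate) together with a Gevrey/commuting-vector-field or CK (Cauchy--Kovalevskaya-type) correction term that produces a good negative contribution capturing inviscid damping — in the Sobolev setting one typically uses a multiplier $m(t,k,\eta)$ of the Bedrossian--Masmoudi type whose time-derivative gives $-\dot m/m \gtrsim$ the ``ghost'' norm near the critical time $\eta/k$; and (ii) a separate, unweighted $H^s$ energy for the zero modes $(u^1_0,\om_0,\vth_0)$, which satisfy a forced heat equation and for which \eqref{zero} is essentially a parabolic smoothing/ODE estimate, the $\nu^{1/4}$ gains coming from the $\nu^{1/2}$ size of the data plus the decay of the nonzero-mode feedback. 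The zero mode of $u^1$ obeys $\pa_t u^1_0 - \nu\pa_y^2 u^1_0 = -(u\cdot\na u)^1_0$, and the quadratic source is quadratic in nonzero modes, hence integrable in time against the decay from (i); similarly for $\vth_0$.

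Third — and this is where the real work lies — I would close the nonlinear estimates by inserting the bootstrap hypotheses into the transport nonlinearities $u\cdot\na\om$, $u\cdot\na\vth$ and the ``lift'' terms $u^2$, $u^2$ (the latter couples $\vth$ back into $\om$ and is the stratification analogue of the Orr mechanism's worst interaction). The main obstacle is the interplay between the $\langle t\rangle^{1/2}$ \emph{growth} of $\om_{\neq}$ (inherent to the $\g^2>\f14$ case, unlike the unstratified Couette problem where vorticity is merely bounded) and the derivative loss in the frame variable: the transport term must be handled by exploiting that the ``bad'' part of $u^2\cdot\pa_y$ is, after symmetrization, reabsorbed by the CK/ghost term, exactly as in Bedrossian--Masmoudi, but now with the extra $\g$-dependent oscillation. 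One must check that the threshold exponent $\al=\b=\f12$ is consistent: the nonlinear terms scale like $E^{3/2}$, the dissipation/CK terms like $\nu^{1/3}E$ plus ghost, and the data $\ep\nu^{1/2}$ makes $E\lesssim \ep^2\nu$, so $E^{3/2}\sim \ep^3\nu^{3/2}\ll \nu^{1/3}\cdot\ep^2\nu$ precisely when $\nu$ is small — giving room to absorb. I would also need the elementary but essential linear-algebra fact that the symmetrization matrix and its inverse are bounded uniformly in $(t,k,\eta)$ (using $\g^2>\f14$), so that passing between $(\om,\vth)$ and the good unknowns costs only fixed powers of $\langle t\rangle$ and $|\pa_x|/(-\D)^{1/2}$; the $\langle t\rangle^{-1/2}$ decay and all the $\langle t\rangle$-powers in \eqref{thm: u} then follow by unwinding these algebraic relations. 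Finally, a standard local existence plus continuity argument upgrades the a priori bounds to the global statement of Theorem~\ref{thm}.
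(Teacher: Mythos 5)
Your high-level strategy matches the paper's: change to the moving frame $z=x-ty$, symmetrize the nonzero-mode $(\om,\vth)$ system into good unknowns (the paper's $X_1=\cN^{-1}f_{\neq}$, $X_2$), hit the resulting system with a combined enhanced-dissipation weight $e^{c\nu^{1/3}t}$ and Cauchy--Kovalevskaya/ghost multipliers, run a bootstrap, and unwind the elliptic relations to recover the decay rates in \eqref{thm: u}. The scaling sanity check and the remark that $\g^2>\f14$ makes the symmetrization matrix uniformly bounded are also both in the spirit of the paper.

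However, two ingredients that are genuinely load-bearing in the paper are absent from your outline, and without them the argument as you describe it would not close. First, you treat the zero mode $\vth_0$ as a single object satisfying ``essentially a parabolic smoothing/ODE estimate.'' The paper instead splits $\th_0=\th_{01}+\th_{02}$, with $\th_{01}$ solving the free heat equation (retaining the full $H^{s+2}$ regularity and $\ep\nu^{1/2}$ size of the data) and $\th_{02}$ carrying the nonlinear feedback (only $H^s$ regularity but the much smaller size $\ep\nu^{3/4}$). This split is forced: the term $\wt{u}^2_{\neq}\pa_y\th_0$ in the $\th_{\neq}$ equation loses a derivative in $y$, and without $\th_{01}$ providing two extra derivatives of room for the large part, your unweighted $H^s$ energy on $\vth_0$ would not control this interaction. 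Second, and more importantly, the forced zero-mode equation for $\th_{02}$ is \emph{not} a benign parabolic estimate: the high-low interaction $(\wt{u}^2_{\neq})_{\text{high}}((\pa_y-t\pa_z)\th_{\neq})_{\text{low}}$ causes growth of $\wh{\th}_{02}(t,\eta)$ precisely in the resonant windows $t\in I_{k,\eta}=[\f{2\eta}{2k+1},\f{2\eta}{2k-1}]$, and the paper constructs a dedicated time-and-frequency--dependent multiplier $m(t,\eta)$ on the zero-mode $\th_{02}$ alone (distinct from the $\cM_1,\cM_2,\cM_3$ multipliers on the nonzero modes) whose CK term $\sqrt{\pa_t m/m}$ absorbs exactly this resonant forcing. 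Your proposal places a BM-type multiplier only on the nonzero modes; you have no mechanism to control this zero-mode resonance. Relatedly, your scaling heuristic ($E^{3/2}\ll\nu^{1/3}E$) treats all energies as the same size and does not in fact pin down $\al=\f12$; the paper's Section 2.5 shows that the actual obstruction is the $X_2\leftrightarrow\th_{02}$ loop (with $\th_{02}$ of size $\nu^{3/4}$ and $X_2$ of size $\nu^{1/2}$), and it is this coupled iteration, not a generic cubic-versus-dissipation comparison, that forces the $\nu^{1/2}$ threshold.

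Finally, a small caveat: the variables in \eqref{w}--\eqref{th} are $|\pa_x|^{1/2}(-\D)^{-1/4}\om$ and $|\pa_x|^{1/2}(-\D)^{1/4}\vth$, but the actual good unknown $X_2$ is a \emph{mixed} combination $\g^{-1}\dot{\cN}f_{\neq}+\g\cN\pa_z\th_{\neq}$, not a pure weight on $\vth$; the estimate \eqref{th} is recovered only after inverting the symmetrization and absorbing the cross term $\dot{\cN}X_1$, which your description skips over.
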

\begin{remark}
We consider the case $\g^2-\f14\gg \nu$ in this paper. It is interesting to study the case when $\g^2-\f14$ is $\nu$-related.
\end{remark}
\begin{remark}
For the Navier-Stokes equation, $\rho=0$, in \cite{MZ2}, the authors proved that the asymptotic stability of the Couette flow holds if the initial perturbation satisfies
$ \|\om_{in}\|_{H^{\s}}\leq \ep_0\nu^{\f13}$. Moreover, the enhanced dissipation rate is 
\begin{align*}
\|P_{\neq}\om\|_{L^2}\leq Ce^{-c\nu^{\f13}t}.
\end{align*}
For the Navier-Stokes Boussinesq system, when $\g^2>\f14$, the asymptotic behavior of $P_{\neq}\om$ changes. The vorticity has an additional transicent growth $\sqrt{t}$, namely,
\begin{align*}
\|P_{\neq}\om\|_{L^2}\leq C\sqrt{\langle t\rangle}e^{-c\nu^{\f13}t}.
\end{align*}
To stabilize this growth, we need an additial $\nu^{\f16}$ smallness, due to the fact that $\sqrt{\langle t\rangle}e^{-c\nu^{\f13}t}\lesssim \nu^{-\f16}$. Thus it is somehow necessary to prove the asymptotic stability under the assumption that the initial perturbation $\om_{in}$ is of size $\nu^{\f13}\nu^{\f16}=\nu^{\f12}$. 

The size of the non-zero mode of the initial perturbations  $(P_{\neq}u_{in}, P_{\neq}\th_{in})$ seems optimal. See Section \ref{optimality} for a formal discussion of the sharpness of the size. We also note that the size of the zero mode of the initial perturbations $(P_0u_{in}, P_0\th_{in})$ can be slightly larger. 
\end{remark}
\subsection{Notations and conventions}
By convention, we always use Greek letters such as $\eta$ and $\xi$ to denote frequencies in the $y$ direction and lowercase Latin characters commonly used as indices such as $k$ and $l$ to denote frequencies in the $x$ or $z$ direction (which are discrete). 

For a Schwartz function $f=f(z,y):\bbT\times\bbR\to\bbR$, we define the Fourier transform as 
\begin{align*}
\wh{f}(k,\eta)=\f{1}{2\pi}\int_{\bbT\times\bbR}f(z,y)e^{-ikz-iy\eta}dzdy,
\end{align*}
and the Fourier inversion of $\wh{f}(k,\eta)$ is 
\begin{align*}
\cF^{-1}\wh{f}(z,y)=\f{1}{2\pi}\sum_{k\in\bbZ}\int_{\bbR}\wh{f}(k,\eta)e^{ikz+iy\eta}d\eta.
\end{align*}
With these definitions, we have $\wh{fg}=\wh{f}\ast\wh{g}$ and
\begin{align*}
\int f(z,y)g(z,v)dzdv=\sum_{k\in\bbZ}\int_{\bbR}\wh{f}(k,\eta)\wh{g}(k,\eta)d\eta.
\end{align*}

We use the notation $f\lesssim g$ when there exists a constant $C>0$ independent of the parameters of interest such that $f\leq Cg$ (we define $g\gtrsim f$ analogously). Similarly, we use the notation $f\approx g$ when there exists $C>0$ such that $C^{-1}g\leq f\leq Cg$.

We will denote the $l^2$ vector norm $|k,\eta|=\big(|k|^2+|\eta|^2\big)^{\f12}$, which by convention is the norm taken in our work. Similarly, given a scalar or vector in $\bbR^n$, we denote
\begin{align*}
\langle v\rangle=(1+|v|^2)^{\f12}.
\end{align*}

For any function $f$ defined on $\bbR$, we denote its Sobolev norm by 
\begin{align*}
\|f\|_{H^s}=\|\langle D_y\rangle^sf\|_{L^2}=\|\langle \eta\rangle^s\wh{f}\|_{L^2}.
\end{align*}

For any function $f$ defined on $\bbT\times\bbR$, we denote its Sobolev norm by 
\begin{align*}
\|f\|_{H^s}=\|\langle \na\rangle^sf\|_{L^2}=\Big(\sum_{k\in\bbZ}\int_{\bbR}\langle k,\eta\rangle^{2s}|\wh{f}(k,\eta)|^2d\eta\Big)^{\f12}.
\end{align*}

We denote the projection to the zero mode by
\begin{align*}
P_0f(y)=f_0(y)=\f{1}{2\pi}\int_{\bbT}f(z,y)dz,
\end{align*}
and denote the projection to the non-zero mode by 
\begin{align*}
P_{\neq}f(z,y)=f_{\neq}(z,y)=f(z,y)-P_0f(y)=f(z,y)-f_0(y).
\end{align*}


For a statement $Q$, $\mathds{1}_{Q}$ will denote the function that equals 1 if $Q$ is true and 0 otherwise.

\section{Main idea and the proof of Theorem \ref{thm}}
In this section, we give the main idea and sketch of the proof of Theorem \ref{thm}.
\subsection{Change of coordinates}
First, we introduce the change of coordinates:
\begin{align}\label{coor change}
&z=x-ty, \quad f(t,z,y)=w(t,x,y),\quad \th(t,z,y)=\vth(t,x,y),\nonumber\\
&\wt{u}(t,z,y)=u(t,x,y), \quad \phi(t,z,y)=\psi(t,x,y).
\end{align}

Under the change of coordinates, we deduce the following nonlinear equations from \eqref{eq: vorticity},
\beq\label{eq: coor nonlinear}
 \left\{\begin{array}{l}
\pa_tf+\wt{u}\cdot\na_Lf-\nu\D_L f=-\g^2\pa_{z}\th,\\
\pa_t\th+\wt{u}\cdot\na_L\th-\nu\D_L\th=\wt{u}^2=-\pa_z(-\D_L)^{-1}f,\\
\wt{u}=\na_L^{\perp}\phi,\quad \D_L\phi=f.
\end{array}\right.
\eeq

We  divide the solution $(f,\th)$ into the zero mode and non-zero modes. Let $f=f_0+f_{\neq}$, whereas $f_0$ satisfies
\beq\label{eq: f0}
 \left\{\begin{array}{l}
\pa_tf_0-\nu\pa_{yy}f_0=-\pa_y(\wt{u}_{\neq}^2f_{\neq})_0,\\
f_0|_{t=0}=P_0f_{in}.
\end{array}\right.
\eeq
Let $\th=\th_{01}+\th_{02}+\th_{\neq}$, whereas $\th_{01}$ satisfies
\beq\label{eq: th01}
 \left\{\begin{array}{l}
\pa_t\th_{01}-\nu\pa_{yy}\th_{01}=0,\\
\th_{01}|_{t=0}=P_0\th_{in},
\end{array}\right.
\eeq
and $\th_{02}$ satisfies
\beq\label{eq: th02}
 \left\{\begin{array}{l}
\pa_t\th_{02}-\nu\pa_{yy}\th_{02}+(\wt{u}_{\neq}\cdot\na_L\th_{\neq})_0=0,\\
\th_{02}|_{t=0}=0.
\end{array}\right.
\eeq
\begin{remark}
The main idea of the decomposition of the zero mode of $\th$, namely $\th_0=\th_{01}+\th_{02}$, is to devide $\th_0$ into two part, the first part $\th_{01}$ has higher regularity with larger size, the second part $\th_{02}$ has lower regularity with smaller size. 
\end{remark}
We also obtain that for $(f_{\neq},\th_{\neq})$, 
\beq\label{eq:f,thnonzero}
 \left\{\begin{array}{l}
\pa_tf_{\neq}+(\wt{u}\cdot\na_Lf)_{\neq}-\nu\D_L f_{\neq}=-\g^2\pa_{z}\th_{\neq},\\
\pa_t\th_{\neq}+(\wt{u}\cdot\na_L\th)_{\neq}-\nu\D_L\th_{\neq}=\wt{u}^2_{\neq},\\
(f_{\neq},\th_{\neq})|_{t=0}=(P_{\neq}f_{in},P_{\neq}\th_{in}).
\end{array}\right.
\eeq

We also need to estimate the zero mode $\wt{u}_0^1$ when estimating the nonlinear terms of the non-zero modes system. From \eqref{eq: u sys},  it is easy to get that $\wt{u}_0^1$ satisfies
\beq\label{eq: u0}
 \left\{\begin{array}{l}
\pa_t\wt{u}_0^1-\nu\pa_{yy}\wt{u}_0^1=-\pa_y(\wt{u}^2_{\neq}\wt{u}_{\neq}^1)_0,\\
\wt{u}_0^1|_{t=0}=P_0\wt{u}^1_{in}.
\end{array}\right.
\eeq

By using the standard energy estimate, it is easy to get that from \eqref{eq: th01},
\begin{lemma}
It holds that 
\begin{align*}
\f{d}{dt}\|\th_{01}\|_{H^{s+2}}^2+2\nu\|\pa_y\th_{01}\|_{H^{s+2}}^2=0.
\end{align*}
\end{lemma}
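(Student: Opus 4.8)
The plan is to run the standard $L^2$-type energy estimate, and since \eqref{eq: th01} is a pure heat equation in the single variable $y$ (recall $\th_{01}=\th_{01}(t,y)$ is the zero mode and has no $x$-dependence), the cleanest route is to pass to the Fourier side in $y$, where the equation decouples frequency by frequency. Taking the Fourier transform in $y$ turns \eqref{eq: th01} into the family of ODEs $\pa_t\wh{\th}_{01}(t,\eta)=-\nu\eta^2\wh{\th}_{01}(t,\eta)$ with datum $\wh{\th}_{01}(0,\eta)=\wh{P_0\th_{in}}(\eta)$; equivalently $\th_{01}(t)=e^{\nu t\pa_{yy}}P_0\th_{in}$.

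First I would multiply this identity by $\langle\eta\rangle^{2(s+2)}\ol{\wh{\th}_{01}(t,\eta)}$, take the real part, and use $\mathrm{Re}\big(\ol{\wh{\th}_{01}}\,\pa_t\wh{\th}_{01}\big)=\f12\pa_t|\wh{\th}_{01}|^2$ to obtain, for each fixed $\eta\in\bbR$,
\[
\f12\pa_t\big(\langle\eta\rangle^{2(s+2)}|\wh{\th}_{01}(t,\eta)|^2\big)=-\nu\eta^2\langle\eta\rangle^{2(s+2)}|\wh{\th}_{01}(t,\eta)|^2.
\]
Integrating in $\eta$ over $\bbR$ and recalling the definitions $\|\th_{01}\|_{H^{s+2}}^2=\int_{\bbR}\langle\eta\rangle^{2(s+2)}|\wh{\th}_{01}|^2\,d\eta$ and $\|\pa_y\th_{01}\|_{H^{s+2}}^2=\int_{\bbR}\eta^2\langle\eta\rangle^{2(s+2)}|\wh{\th}_{01}|^2\,d\eta$, then multiplying by $2$, yields exactly the claimed identity. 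Alternatively, one may stay in physical space: apply $\langle D_y\rangle^{s+2}$ to \eqref{eq: th01}, pair with $\langle D_y\rangle^{s+2}\th_{01}$ in $L^2(\bbR)$, and integrate by parts in $y$ in the dissipative term, the boundary terms vanishing because $\th_{01}(t,\cdot)\in H^{s+2}(\bbR)$ for all $t\ge0$ by the semigroup formula together with $P_0\th_{in}\in H^{s+2}$ under the hypothesis of Theorem \ref{thm}.

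There is essentially no obstacle: the result is an exact identity rather than an inequality precisely because \eqref{eq: th01} has neither a forcing term nor a transport term, so no commutators or error contributions are generated, and $\langle D_y\rangle^{s+2}$ commutes with $\pa_{yy}$ as both are Fourier multipliers in $y$. The only points deserving a word of justification are the interchange of $\pa_t$ with the $\eta$-integral — legitimate by dominated convergence using the pointwise bound $|\wh{\th}_{01}(t,\eta)|=e^{-\nu\eta^2t}|\wh{P_0\th_{in}}(\eta)|$ and its $t$-derivative, all dominated by $\langle\eta\rangle^{s+2}|\wh{P_0\th_{in}}|\in L^1\cap L^2$ up to polynomial factors absorbed by the Gaussian — and the finiteness of both norms appearing in the statement, which is immediate from the same formula.
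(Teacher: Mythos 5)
Your proof is correct and follows the same route as the paper, which simply invokes the standard energy estimate for the heat equation \eqref{eq: th01} (no forcing, no transport, so the identity is exact); your Fourier-side computation and the physical-space pairing with $\langle D_y\rangle^{s+2}\th_{01}$ are just explicit renditions of that. Nothing is missing.
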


\subsection{Linearized system and good unknowns. }
Before beginning the proof of Theorem \ref{thm}, we first discuss the corresponding linearized system:
\begin{equation}\label{Euler-Boussinesq}
 \left\{\begin{array}{l}
\pa_t\om+y\pa_x\om-\nu\D \om=-\g^2\pa_x\vth,\\
\pa_t\vth+y\pa_x\vth-\nu\D\vth=u^2,\\
u=(u^1,u^2)=(-\pa_y\psi,\pa_x\psi),\quad \D\psi=\om.
\end{array}\right.
\end{equation}

Under the new coordinates \eqref{coor change},  we get that $(f,\th)$ satisfies
\beq\label{eq: new coor}
 \left\{\begin{array}{l}
\pa_tf-\nu\D_L f=-\g^2\pa_{z}\th,\\
\pa_t\th-\nu\D_L\th=\wt{u}^2,\\
\wt{u}=\na_L^{\perp}\phi,\quad \D_L\phi=f,
\end{array}\right.
\eeq
where $\na_L=\Big(\begin{array}{ccc}
   \pa_{z}\\
   \pa_y-t\pa_{z}\\
  \end{array}\Big)$ and $\D_L=\pa_{z}^2+(\pa_y-t\pa_{z})^2$.
  
For \eqref{eq: new coor}, by taking Fourier transform in $z$ and $y$, we obtain
\beq\label{eq: fourier1}
 \left\{\begin{array}{l}
\pa_t\wh{f}+\nu(k^2+(\eta-kt)^2)\wh{f}=-\g^2 ik\wh{\th},\\
\pa_t\wh{\th}+\nu(k^2+(\eta-kt)^2)\wh{\th}=-\f{ik}{k^2+(\eta-kt)^2}\wh{f},\\
\wh{\wt{u}}=\Big(\begin{array}{ccc}
   -i(\eta-kt)\wh{\phi}\\
   ik\wh{\phi}\\
  \end{array}\Big),\quad -(k^2+(\eta-kt)^2)\wh{\phi}=\wh{f}.
\end{array}\right.
\eeq

For the sake of presentation, we obtain that by denoting $\wh{\Th}=ik\wh{\th}$,
\beq\label{eq: W Th}
 \left\{\begin{array}{l}
\pa_t\wh{f}+\nu (k^2+(\eta-kt)^2)\wh{f}=-\g^2 \wh{\Th},\\
\pa_t\wh{\Th}+\nu (k^2+(\eta-kt)^2)\wh{\Th}=\f{k^2}{(k^2+(\eta-kt)^2)}\wh{f}.
\end{array}\right.
\eeq

For our purposes, it is more convenient to recall the energy method used in \cite{Bianchini}, originally introduced to deal with the linear stability of the Couette flow in a compressible fluid \cite{ADM, ZZZi}. Indeed, we have that for $k\neq0$ and $t\geq\f{\eta}{k}$,
\begin{align*}
&\f{d}{dt}\left(\big(1+(t-\f{\eta}{k})^2\big)^{-\f12}|\wh{f}|^2+\big(1+(t-\f{\eta}{k})^2\big)^{\f12}\g^2|\wh{\Th}|^2+\wh{f}\ol{\wh{\Th}}\right)\\
&\quad+2\nu \big(k^2+(\eta-kt)^2\big)\left(\big(1+(t-\f{\eta}{k})^2\big)^{-\f12}|\wh{f}|^2+\big(1+(t-\f{\eta}{k})^2\big)^{\f12}\g^2|\Th|^2+\wh{f}\ol{\wh{\Th}}\right)\nonumber\\
&\lesssim \big(1+(t-\f{\eta}{k})^2\big)^{-\f32}\left(\big(1+(t-\f{\eta}{k})^2\big)^{-\f12}|\wh{f}|^2+\big(1+(t-\f{\eta}{k})^2\big)^{\f12}\g^2|\wh{\Th}|^2\right),
\end{align*}
and for $t\leq\f{\eta}{k}$,
\begin{align*}
&\f{d}{dt}\left(\big(1+(t-\f{\eta}{k})^2\big)^{-\f12}|\wh{f}|^2+\big(1+(t-\f{\eta}{k})^2\big)^{\f12}\g^2|\wh{\Th}|^2-\wh{f}\ol{\wh{\Th}}\right)\\
&\quad \quad+2\nu \big(k^2+(\eta-kt)^2\big)\left(\big(1+(t-\f{\eta}{k})^2\big)^{-\f12}|\wh{f}|^2+\big(1+(t-\f{\eta}{k})^2\big)^{\f12}\g^2|\Th|^2-\wh{f}\ol{\wh{\Th}}\right)\nonumber\\
&\quad\lesssim \big(1+(t-\f{\eta}{k})^2\big)^{-\f32}\left(\big(1+(t-\f{\eta}{k})^2\big)^{-\f12}|\wh{f}|^2+\big(1+(t-\f{\eta}{k})^2\big)^{\f12}\g^2|\wh{\Th}|^2\right),
\end{align*}
which implies that 
\begin{align}\label{eq:linear-est}
\begin{split}
|\widehat{f}(t,k,\eta)|&\lesssim \f{|k^2+(kt-\eta)^2|^{\f14}}{|k^2+\eta^2|^{\f14}}|\wh{f}_{in}(k,\eta)|e^{-c\nu k^2t^3},\\
|\widehat{\th}(t,k,\eta)|&\lesssim \f{|k^2+\eta^2|^{\f14}}{|k^2+(kt-\eta)^2|^{\f14}}|\wh{\th}_{in}(k,\eta)|e^{-c\nu k^2t^3}.
\end{split}
\end{align}

To attack the nonlinear problem, the linear estimate \eqref{eq:linear-est} is not enough, thus we introduce the good unknowns $(X_1, X_2)$ which enjoy a better system. The idea of finding the good unknowns is to symmetrize the system \eqref{Euler-Boussinesq} via time-dependent Fourier multipliers.
First, let us introduce two time-dependent Fourier multipliers $\mathcal{N}=\mathcal{N}(\na_{L})$ and $\dot{\mathcal{N}}=\dot{\mathcal{N}}(\na_{L})$, which are defined by
\beno
\mathcal{N}(\na_{L})f_{\neq}\eqdef|\pa_z|^{-\f12}(-\Delta_L)^{\f14}f_{\neq}=\mathcal{F}^{-1}\Big(|k|^{-\f12}(k^2+(\eta-kt)^2)^{\f14}\hat{f}_{\neq}(k,\eta)\Big),
\eeno
and
\begin{align*}
\dot{\mathcal{N}}(\na_{L})f_{\neq}&\eqdef-\f{1}{2}(\pa_y-t\pa_z)\pa_z|\pa_z|^{-\f12}(-\Delta_L)^{-\f34}f_{\neq}\\
&=\mathcal{F}^{-1}\Big(\f12(\eta-kt)k|k|^{-\f12}(k^2+(\eta-kt)^2)^{-\f34}\hat{f}_{\neq}(k,\eta)\Big).
\end{align*}
Next, let us introduce the good unknowns
\begin{align*}
X_1\eqdef\mathcal{N}^{-1}f_{\neq},\quad X_2\eqdef(1-\f{1}{4\g^2})^{-\f12}\left(\g^{-1}\dot{\mathcal{N}}f_{\neq}+\g\mathcal{N}\pa_z\th_{\neq}\right),
\end{align*}
and it is easy to obtain that 
\beq\label{def f0 th0}
f_{\neq}=\mathcal{N} X_1,\quad \th_{\neq}=\g^{-1}(\pa_z)^{-1}\left((1-\f{1}{4\g^2})^{\f12}\mathcal{N}^{-1}X_2-\f12\g^{-1}\dot{\mathcal{N}}X_1\right).
\eeq

Thus, the nonlinear system \eqref{eq:f,thnonzero} can be rewritten as follows:
\beq\label{eq: X1X2}
 \left\{\begin{array}{l}
\pa_tX_1-\nu\D_L X_1=-\f{\s}{2}(\cN^{-1})^2X_2-\cN^{-1}(\wt{u}\cdot\na_L f)_{\neq},\\
\pa_tX_2-\nu\D_L X_2=\f{\s}{2}(\cN^{-1})^2X_1+\f{3}{2}\s^{-1}\pa_z^2(-\D_L)^{-1}\cN^{-2}X_1\\
\quad-2\g^2\s^{-1}\pa_z\cN(\wt{u}\cdot\na_L\th)_{\neq}-\s^{-1}\dot{\cN}(\wt{u}\cdot\na_L f)_{\neq}.
\end{array}\right.
\eeq
Here and as follows, we will denote $\s=\sqrt{4\g^2-1}$ for brevity. 

In the following, we focus on the equations \eqref{eq: f0}, \eqref{eq: th02}, \eqref{eq: u0} and \eqref{eq: X1X2}.
\subsection{Main multipliers}
We use  some key time-dependent Fourier multipliers $\mathcal{N}$, $\dot{\mathcal{N}}$, $\cA$ and $m$, whereas $\mathcal{N}$ and $\dot{\mathcal{N}}$ are used to capture the linear growth/decay of the nonzero modes of $(f,\th)$.  To control the growth from the nonlinear interactions, we introduce $\cA$ and $m$.
\subsubsection{Multipliers $\cA$} We use the Fourier multiplier operator $\cA$ to control the growth of non-zero modes, which is defined as:
\begin{align*}
\cA f(t,z,y)\eqdef\cF^{-1}\Big(\cA_k^s(t,\eta)\wh{f}(t,k,\eta)\Big),
\end{align*}
with
 \begin{align*}
 \cA_k^s(t,\eta)\eqdef e^{c\nu^{\f13}t}\cM(t,k,\eta)\langle k,\eta\rangle^s.
 \end{align*}
Here, the main multiplier $\cM$ is defined by 
\begin{align*}
\cM\eqdef\exp\{K(\cM_1+\cM_2+\cM_3)\},
\end{align*}
where $K$ is a large constant, and $\cM_i, i=1,2,3$ are defined by 
 \begin{align*}
\pa_t\cM_1&=-\nu^{\f13}|k|^{\f23}\va'\big(\nu^{\f13}|k|^{-\f13}\text{sgn}(k)(\eta-kt)\big),\\
\pa_t\cM_2&=-\f{k^2}{k^2+(\eta-kt)^2},\\
\pa_t\cM_3&=-\f{1}{(k^2+(\eta-kt)^2)^{\f34}},
\end{align*}
with the initial data $\cM_i|_{t=0}=0$ for $i=1,2,3,$ and here $\va\in C^{\infty}(\bbR)$ is a real-valued, non-decreasing function such that  $0\leq\va\leq1$, $\va'\geq0$ and $\va'=\f14$ on $[-1,1]$. Thus, we can easily get 
 \begin{align}\label{M bound}
 0<c_0(K)\leq\cM\leq1.
 \end{align}
 
 As a consequence, 
 \ben\label{eq:A bound}
 \cA_k^s(t,\eta)\approx e^{c\nu^{\f13}t}\langle k,\eta\rangle^s.
 \een 
 
 The operator $\cM_1$ is used to obtain the enhanced dissipation. Indeed, we have the following lemma: 
\begin{lemma}\label{low bound}
It holds for all $t, \eta$ and $k\neq0$ that 
\begin{align*}
\f14\nu(k^2+(\eta-kt)^2)+\nu^{\f13}|k|^{\f23}\va'\big(\nu^{\f13}|k|^{-\f13}\mathrm{sgn}(k)(\eta-kt)\big)\geq \f{1}{4}\nu^{\f13}|k|^{\f23}.
\end{align*}
\end{lemma}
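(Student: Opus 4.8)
The inequality is a pointwise estimate in the frequency variables, so I would fix $t$, $\eta$ and $k\neq 0$ and treat it as a one-variable statement in the quantity that controls the size of $\eta-kt$. The plan is to distinguish two regimes according to the magnitude of $\nu^{\f13}|k|^{-\f13}|\eta-kt|$, that is, according to whether the argument of $\va'$ lands in the plateau region $[-1,1]$ where $\va'=\f14$, or outside it.

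In the first regime, $|\nu^{\f13}|k|^{-\f13}(\eta-kt)|\leq 1$, i.e. $|\eta-kt|\leq \nu^{-\f13}|k|^{\f13}$. Here $\va'\big(\nu^{\f13}|k|^{-\f13}\mathrm{sgn}(k)(\eta-kt)\big)=\f14$, so the second term on the left-hand side alone equals $\f14\nu^{\f13}|k|^{\f23}$, and since $\nu(k^2+(\eta-kt)^2)\geq 0$ the bound follows immediately (indeed with room to spare). In the second regime, $|\eta-kt|> \nu^{-\f13}|k|^{\f13}$, I discard the $\va'$ term (it is $\geq 0$ since $\va'\geq 0$) and keep only the genuine dissipation: $\f14\nu(k^2+(\eta-kt)^2)\geq \f14\nu(\eta-kt)^2> \f14\nu\cdot\nu^{-\f23}|k|^{\f23}=\f14\nu^{\f13}|k|^{\f23}$, which is exactly what is needed. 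Thus in either case the left-hand side is at least $\f14\nu^{\f13}|k|^{\f23}$.

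There is essentially no obstacle here: the only point requiring a moment's care is checking that the threshold $|\eta-kt|=\nu^{-\f13}|k|^{\f13}$ is precisely the place where the two contributions cross over — the $\va'$ plateau covers exactly the range of $\eta-kt$ on which $\nu(\eta-kt)^2$ is too small to provide the desired lower bound on its own, so the two mechanisms patch together seamlessly. I would also note that the factor $\mathrm{sgn}(k)$ inside $\va'$ is harmless because $\va'$ is applied to a real argument and the plateau condition $\va'=\f14$ on $[-1,1]$ is symmetric, so it is only the absolute value $|\nu^{\f13}|k|^{-\f13}(\eta-kt)|$ that matters for the case split. This completes the argument.
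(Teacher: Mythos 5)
Your proof is correct and follows essentially the same two-case split as the paper: when $|\eta-kt|\leq \nu^{-\f13}|k|^{\f13}$ the plateau $\va'=\f14$ supplies the bound, and when $|\eta-kt|>\nu^{-\f13}|k|^{\f13}$ the dissipation term $\f14\nu(\eta-kt)^2$ supplies it. Nothing further to add.
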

\begin{proof}
For the case $|\eta-kt|\leq \nu^{-\f13}|k|^{\f13}$, by the definition of $\varphi$, we have 
$$\nu^{\f13}|k|^{\f23}\va'\big(\nu^{\f13}|k|^{-\f13}\mathrm{sgn}(k)(\eta-kt)\big)\geq \f14\nu^{\f13}|k|^{\f23}.$$
 For the case $|\eta-kt|\geq \nu^{-\f13}|k|^{\f13}$, it holds that $\f14\nu(k^2+(\eta-kt)^2)\geq \f{1}{4}\nu^{\f13}|k|^{\f23}$. Thus we proved the lemma. 
\end{proof}
The operator $\cM_2$ is to obtain the inviscid damping. For $|k|$ small, the operator $\cM_3$ is slightly stronger than $\cM_2$ in terms of the time decay. We shall also use the following commutator estimates.
\begin{lemma}\label{M lem}
There holds that for $k\neq0$,
\begin{align*}
&\cM(t,k,\eta)\langle k,\eta\rangle^s-\cM(t,k,\xi)\langle k, \xi\rangle^s\\
&\quad\lesssim|\eta-\xi|\big(\nu^{\f13}|k|^{-\f13}+|k|^{-1}\big)\langle k,\xi\rangle^s+|\eta-\xi|\langle k,\eta-\xi\rangle^{s-1}.
\end{align*}
\end{lemma}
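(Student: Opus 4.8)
Here is the plan.

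\medskip

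\textbf{Strategy.} I would reduce the estimate to three elementary, \emph{$t$-uniform} Lipschitz bounds for the exponents $\cM_1,\cM_2,\cM_3$, and then deal with the polynomial weight $\langle k,\eta\rangle^s$ by a routine Peetre-type splitting. First, since $\pa_t\cM_i\le 0$ and $\cM_i|_{t=0}=0$, each $\cM_i\le 0$, so $\cM=\exp\{K(\cM_1+\cM_2+\cM_3)\}$ lives in $(0,1]$ and the mean value theorem for $x\mapsto e^x$ on $(-\infty,0]$ gives $|\cM(t,k,\eta)-\cM(t,k,\xi)|\le K\sum_{i=1}^3|\cM_i(t,k,\eta)-\cM_i(t,k,\xi)|$ (and trivially $\le 2$). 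Then I split
\[
\cM(t,k,\eta)\langle k,\eta\rangle^s-\cM(t,k,\xi)\langle k,\xi\rangle^s=\big(\cM(t,k,\eta)-\cM(t,k,\xi)\big)\langle k,\eta\rangle^s+\cM(t,k,\xi)\big(\langle k,\eta\rangle^s-\langle k,\xi\rangle^s\big),
\]
and estimate the two pieces separately (I will in fact bound the absolute value of the left side, which is stronger than the stated inequality).

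\textbf{The key step: integrating the exponents exactly.} Each $\pa_t\cM_i$ can be integrated in $\tau$ in closed form, which is what produces a bound that does \emph{not} grow with $t$. A change of variable shows $\pa_t\cM_1$ is the perfect derivative of $\va\big(\nu^{\f13}|k|^{-\f13}\mathrm{sgn}(k)(\eta-k\tau)\big)$, so
\[
\cM_1(t,k,\eta)=\va\big(\nu^{\f13}|k|^{-\f13}\mathrm{sgn}(k)(\eta-kt)\big)-\va\big(\nu^{\f13}|k|^{-\f13}\mathrm{sgn}(k)\eta\big);
\]
similarly $\cM_2(t,k,\eta)=-\mathrm{sgn}(k)\big(\arctan\tfrac{kt-\eta}{|k|}+\arctan\tfrac{\eta}{|k|}\big)$ and $\cM_3(t,k,\eta)=\tfrac1k\big(F(\eta-kt)-F(\eta)\big)$ with $F(a)=\int_0^a(k^2+u^2)^{-\f34}\,du$. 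Using $\|\va'\|_{L^\infty}\lesssim1$, $|\arctan'|\le1$, and $|F'(u)|=(k^2+u^2)^{-\f34}\le|k|^{-\f32}$, each of these is Lipschitz in the $\eta$-variable uniformly in $t$, with
\[
|\cM_1(t,k,\eta)-\cM_1(t,k,\xi)|\lesssim\nu^{\f13}|k|^{-\f13}|\eta-\xi|,\qquad |\cM_j(t,k,\eta)-\cM_j(t,k,\xi)|\lesssim|k|^{-1}|\eta-\xi|\quad(j=2,3),
\]
hence $|\cM(t,k,\eta)-\cM(t,k,\xi)|\lesssim\big(\nu^{\f13}|k|^{-\f13}+|k|^{-1}\big)|\eta-\xi|$.

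\textbf{Handling the weights.} For the second piece, the mean value theorem for $a\mapsto a^s$ ($s\ge1$), together with $\big|\langle k,\eta\rangle-\langle k,\xi\rangle\big|\le|\eta-\xi|$, $\langle k,\eta\rangle\lesssim\langle k,\xi\rangle+\langle k,\eta-\xi\rangle$, and $\langle k,\xi\rangle^{s-1}\le|k|^{-1}\langle k,\xi\rangle^{s}$ (valid because $|k|\le\langle k,\xi\rangle$), yields $\cM(t,k,\xi)\big|\langle k,\eta\rangle^s-\langle k,\xi\rangle^s\big|\lesssim|\eta-\xi||k|^{-1}\langle k,\xi\rangle^s+|\eta-\xi|\langle k,\eta-\xi\rangle^{s-1}$, i.e.\ both target terms. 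For the first piece I use $\langle k,\eta\rangle^s\lesssim\langle k,\xi\rangle^s+\langle k,\eta-\xi\rangle^s$ and split on the size of $|\eta-\xi|$ versus $|k|$: if $|\eta-\xi|\le|k|$ then $\langle k,\eta-\xi\rangle\lesssim|k|\le\langle k,\xi\rangle$, so the Lipschitz bound above gives $|\cM(t,k,\eta)-\cM(t,k,\xi)|\langle k,\eta\rangle^s\lesssim(\nu^{\f13}|k|^{-\f13}+|k|^{-1})|\eta-\xi|\langle k,\xi\rangle^s$; if $|\eta-\xi|>|k|\ (\ge1)$ then $\langle k,\eta-\xi\rangle\lesssim|\eta-\xi|$, so $\langle k,\eta-\xi\rangle^s\lesssim|\eta-\xi|\langle k,\eta-\xi\rangle^{s-1}$ and using $|\cM(t,k,\eta)-\cM(t,k,\xi)|\le 2$ for the $\langle k,\eta-\xi\rangle^s$ contribution and the Lipschitz bound for the $\langle k,\xi\rangle^s$ contribution gives the claim. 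Adding the pieces finishes the proof.

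\textbf{Main obstacle.} The only genuinely delicate point is the exact time-integration in the key step: a naive bound on $\int_0^t\pa_\eta\pa_t\cM_i\,d\tau$ would retain a factor growing in $t$ (through $\va''$, $\arctan''$, $F''$) and wreck the estimate, so one must recognize the telescoping/primitive structure of each $\pa_t\cM_i$. (For $\cM_3$ it is the integrability of $(1+v^2)^{-3/4}$ — the exponent $\tfrac34>\tfrac12$ — that makes $F$, hence $\cM_3$, bounded, consistent with \eqref{M bound}.) After that the weight bookkeeping is purely mechanical.
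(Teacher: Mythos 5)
Your proposal is correct and follows essentially the same route as the paper: $t$-uniform Lipschitz bounds in the frequency variable for $\cM_1,\cM_2,\cM_3$ (with constants $\nu^{\f13}|k|^{-\f13}$ and $|k|^{-1}$) obtained from their exact time-integrated closed forms, combined with the uniform bounds \eqref{M bound} on the exponential and a standard splitting of the weighted difference. The only cosmetic difference is that you pair the $\cM$-difference with $\langle k,\eta\rangle^s$ (which forces a harmless case split on $|\eta-\xi|$ versus $|k|$ and the trivial bound $|\cM(t,k,\eta)-\cM(t,k,\xi)|\leq 2$), whereas the paper pairs it directly with $\langle k,\xi\rangle^s$; both arrangements give the stated estimate.
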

\begin{proof}
By the definition of $\cM$, we have
\begin{align}\label{a}
&\cM(t,k,\eta)-\cM(t,k,\xi)\nonumber\\
&=\Big[(e^{K\cM_1})(t,k,\eta)-(e^{K\cM_1})(t,k,\xi)\Big](e^{K\cM_2+K\cM_3})(t,k,\eta)\nonumber\\
&\quad+(e^{K\cM_1})(t,k,\xi)(e^{K\cM_3})(t,k,\eta)\Big[(e^{K\cM_2})(t,k,\eta)-(e^{K\cM_2})(t,k,\xi)\Big]\nonumber\\
&\quad+(e^{K\cM_1})(t,k,\xi)(e^{K\cM_2})(t,k,\xi)\Big[(e^{K\cM_3})(t,k,\eta)-(e^{K\cM_3})(t,k,\xi)\Big].
\end{align}
According to the definition of $\cM_1$, we have
\begin{align*}
\pa_{\xi}\cM_1(t,k,\xi)=\nu^{\f13}|k|^{-\f13}\text{sgn}(k)\va'\big(\nu^{\f13}|k|^{-\f13}\text{sgn}(k)(\xi-kt)\big),
\end{align*}
and then by \eqref{M bound}, we obtain
\begin{align}\label{a1}
&|(e^{K\cM_1})(t,k,\eta)-(e^{K\cM_1})(t,k,\xi)|\nonumber\\
&=\big|(\eta-\xi)\int_0^1(e^{K\cM_1})(t,k,\xi)K\pa_{\xi}\cM_1(t,k,\xi)d\xi\Big|\nonumber\\
&=\Big|K(\eta-\xi)\int_0^1(e^{K\cM_1})(t,k,\xi)\nu^{\f13}|k|^{-\f13}\text{sgn}(k)\va'\big(\nu^{\f13}|k|^{-\f13}\text{sgn}(k)(\xi-kt)\big)d\xi\Big|\nonumber\\
&\lesssim |\eta-\xi|\nu^{\f13}|k|^{-\f13}.
\end{align}
By the definition of $\cM_2$, we have 
\begin{align*}
\cM_2(t,k,\eta)-\cM_2(t,k,\xi)&=-\int_0^t\f{k^2}{k^2+(\eta-k\tau)^2}d\tau+\int_0^t\f{k^2}{k^2+(\xi-k\tau)^2}d\tau\\
&=\int_{\f{\eta}{k}}^{\f{\eta}{k}-t}\f{1}{1+z^2}dz-\int_{\f{\xi}{k}}^{\f{\xi}{k}-t}\f{1}{1+z^2}dz\\
&=\int_{\f{\eta}{k}}^{\f{\xi}{k}}\f{1}{1+z^2}dz-\int_{\f{\eta}{k}-t}^{\f{\xi}{k}-t}\f{1}{1+z^2}dz,
\end{align*}
and then we get 
\begin{align*}
|\cM_2(t,k,\eta)-\cM_2(t,k,\xi)|\lesssim \f{|\eta-\xi|}{|k|}.
\end{align*}
Thus, we obtain that by \eqref{M bound},
\begin{align}\label{a2}
|(e^{K\cM_2})(t,k,\eta)-(e^{K\cM_2})(t,k,\xi)|
&\lesssim |\cM_2(t,k,\eta)-\cM_2(t,k,\xi)|\lesssim \f{|\eta-\xi|}{|k|}.
\end{align}

By the similar argument, we get that 
\begin{align*}
|\cM_3(t,k,\eta)-\cM_3(t,k,\xi)|\lesssim \f{|\eta-\xi|}{|k|^{\f52}},
\end{align*}
and 
\begin{align}\label{a3}
&|(e^{K\cM_3})(t,k,\eta)-(e^{K\cM_3})(t,k,\xi)|\lesssim \f{|\eta-\xi|}{|k|^{\f52}}.
\end{align}

Then,  from \eqref{a}, we obtain that by combining \eqref{M bound}, \eqref{a1}, \eqref{a2} and \eqref{a3},
\begin{align*}
|\cM(t,k,\eta)-\cM(t,k,\xi)|\lesssim |\eta-\xi|\Big(\nu^{\f13}|k|^{-\f13}+|k|^{-1}\Big).
\end{align*}

Thus, for $k\neq0$, we obtain
\begin{align*}
&\cM(t,k,\eta)\langle k,\eta\rangle^s-\cM(t,k,\xi)\langle k, \xi\rangle^s\\
&=\big[\cM(t,k,\eta)-\cM(t,k,\xi)\big]\langle k,\xi\rangle^s+\cM(t,k,\eta)\big[\langle k,\eta\rangle^s-\langle k,\xi\rangle^s\big]\nonumber\\
&\lesssim|\eta-\xi|\Big(\nu^{\f13}|k|^{-\f13}+|k|^{-1}\Big)\langle k,\xi\rangle^s+|\eta-\xi|\big[\langle k,\eta\rangle^{s-1}+\langle k,\xi\rangle^{s-1}\big]\nonumber\\
&\lesssim|\eta-\xi|\Big(\nu^{\f13}|k|^{-\f13}+|k|^{-1}\Big)\langle k,\xi\rangle^s+|\eta-\xi|\langle k,\eta-\xi\rangle^{s-1}.
\end{align*}

We complete the proof of the lemma.
\end{proof}
\subsubsection{Multipliers $m$}\label{multiplier m} We use the multiplier operator $m$ to control the nonlinear growth of zero-mode $\th_{02}$, see Section 5.3 for more details. 

For $|\eta|\geq3$, let 
\beno
t(\eta)\eqdef\f{2|\eta|}{2E(\sqrt{|\eta|})+1},
\eeno
where $E(\sqrt{|\eta|})$ is  the integer part of $\sqrt{|\eta|}$.
For $|k|=1,2,3,\cdots,E(\sqrt{|\eta|})$ and $k\eta>0$, we denote the resonance interval by
$$I_{k,\eta}=\Big[\f{2\eta}{2k+1},\f{2\eta}{2k-1}\Big].$$
Then $t(\eta)\approx\sqrt{|\eta|}$ and 
$$[t(\eta),2|\eta|]=\bigcup_{k=1}^{E(\sqrt{|\eta|})}I_{k,\eta}.$$

For $|\eta|<3$, we define $m(t,\eta)\equiv1$.

For $|\eta|\geq3$, if $t\geq 2|\eta|$, we define $m(t,\eta)\equiv1$; \\
if $t\in[\f23\eta,2\eta]=I_{1,\eta}$, we define $m(t,\eta)$ by solving
\beno
 \left\{\begin{array}{l}
\pa_tm=-\f{m}{(1+(t-\eta)^2)^{\f34}},\\
m(2\eta,\eta)=1;\\
\end{array}\right.
\eeno
if $t\in I_{k,\eta}$ with $k=2,3,\cdots, E(\sqrt{|\eta|})$, we define $m(t,\eta)$ by solving 
\beno
 \left\{\begin{array}{l}
\pa_tm=-\f{m}{k^2(1+(t-\f{\eta}{k})^2)^{\f34}},\\
m|_{t=\f{2\eta}{2k-1}}=m(\f{2\eta}{2k-1},\eta);\\
\end{array}\right.
\eeno
if $t\leq t(\eta)$, let $m(t,\eta)=m\Big(\f{2\eta}{2E(\sqrt{|\eta|})+1},\eta\Big)$.

\begin{lemma}
It holds that 
\begin{align*}
m(t,\eta)\approx 1.
\end{align*}
\end{lemma}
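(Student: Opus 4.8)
The goal is to show $m(t,\eta)\approx 1$, i.e.\ that $c_1\le m(t,\eta)\le 1$ uniformly in $t$ and $\eta$ for some absolute constant $c_1>0$. The upper bound $m\le 1$ is immediate: for $|\eta|<3$ and for $t\ge 2|\eta|$ we have $m\equiv 1$, and on each interval $I_{k,\eta}$ the ODE $\pa_t m=-m/(k^2(1+(t-\eta/k)^2)^{3/4})$ (with the obvious modification for $k=1$) has a non-positive right-hand side, so $m$ is non-increasing as $t$ decreases from $2|\eta|$; since the value fed in at the right endpoint of $I_{k,\eta}$ is at most $1$ by induction on $k$, and finally $m(t,\eta)=m(2|\eta|/(2E(\sqrt{|\eta|})+1),\eta)$ for $t\le t(\eta)$, we get $0<m\le 1$ everywhere. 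So the content is the \emph{lower} bound.

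For the lower bound I would integrate the ODE explicitly. On $I_{k,\eta}$ (for $2\le k\le E(\sqrt{|\eta|})$) one has, writing the right endpoint $t_k^+=\tfrac{2\eta}{2k-1}$ and left endpoint $t_k^-=\tfrac{2\eta}{2k+1}$,
\begin{align*}
\log m(t_k^-,\eta)-\log m(t_k^+,\eta)=-\int_{t_k^-}^{t_k^+}\frac{d\tau}{k^2\big(1+(\tau-\eta/k)^2\big)^{3/4}},
\end{align*}
so the total decay from $t=2|\eta|$ down to $t=t(\eta)$ is
\begin{align*}
-\log m(t(\eta),\eta)=\int_{\tfrac23\eta}^{2\eta}\frac{d\tau}{(1+(\tau-\eta)^2)^{3/4}}+\sum_{k=2}^{E(\sqrt{|\eta|})}\int_{t_k^-}^{t_k^+}\frac{d\tau}{k^2\big(1+(\tau-\eta/k)^2\big)^{3/4}}.
\end{align*}
It then suffices to bound this sum by an absolute constant. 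Each integral is at most $\displaystyle\frac1{k^2}\int_{\bbR}\frac{d\tau}{(1+\tau^2)^{3/4}}$, but that integral diverges, so one must use the finite length $|I_{k,\eta}|=\tfrac{4\eta}{4k^2-1}\approx \tfrac{|\eta|}{k^2}$: on $I_{k,\eta}$ the integrand is $\le 1$, giving a bound $\le |I_{k,\eta}|/k^2\approx |\eta|/k^4$, and the sum $\sum_k |\eta|/k^4$ still need not converge to something $\eta$-independent. The right move is to split each $I_{k,\eta}$ according to whether $|\tau-\eta/k|\le 1$ or not: on the near part the integrand is $\le 1$ over a set of length $\le 2$, contributing $\le 2/k^2$; on the far part $(1+(\tau-\eta/k)^2)^{3/4}\gtrsim |\tau-\eta/k|^{3/2}$, and integrating $|\tau-\eta/k|^{-3/2}$ from $1$ to $\infty$ converges, contributing $\lesssim 1/k^2$. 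Summing $\sum_{k\ge 2} C/k^2<\infty$ gives an absolute bound, and the $k=1$ term is handled the same way (integrand $\le 1$ on the near part of length $\le 2$, and $\lesssim 1$ on the far part by the convergent tail). Hence $-\log m(t(\eta),\eta)\le C_0$, so $m(t(\eta),\eta)\ge e^{-C_0}$, and since $m$ is constant for $t\le t(\eta)$ and equals $1$ for $|\eta|<3$, we conclude $m(t,\eta)\ge e^{-C_0}=:c_1>0$ for all $t,\eta$.

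The one point requiring a little care — and the main (mild) obstacle — is making the near/far splitting uniform in $k$ and $\eta$: one needs that on $I_{k,\eta}$ the variable $\tau-\eta/k$ ranges over an interval containing $0$ (true since $\eta/k\in I_{k,\eta}$ because $\tfrac{2\eta}{2k+1}\le \tfrac{\eta}{k}\le \tfrac{2\eta}{2k-1}$ when $\eta,k>0$), and that the $k^{-2}$ prefactor (absent, i.e.\ replaced by $1$, for $k=1$) is exactly what makes $\sum_k$ converge; the integral $\int_1^\infty s^{-3/2}\,ds=2$ is what makes the far part of each piece finite independently of how long $I_{k,\eta}$ is. Once these are in place the estimate is routine. (Strictly, one should also note the case $k\eta<0$ reduces to $k\eta>0$ by the evident symmetry $m(t,-\eta)$-type considerations, or simply observe the construction only ever uses $k\eta>0$ resonances.)
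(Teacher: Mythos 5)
Your proof follows essentially the same route as the paper's: write $\log m(t(\eta),\eta)$ as a sum over $k$ of integrals over $I_{k,\eta}$, bound each piece by $C/k^2$, and sum the convergent series. That is exactly the paper's argument, and your final estimate is correct.

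There is, however, a genuine mathematical error in an intermediate step, even though it does not propagate to your conclusion. You write that ``$\int_{\bbR}\frac{d\tau}{(1+\tau^2)^{3/4}}$ diverges.'' It does not: for $|\tau|\ge 1$ the integrand is $\lesssim |\tau|^{-3/2}$, and $\int_1^\infty \tau^{-3/2}\,d\tau=2<\infty$, while near $\tau=0$ the integrand is bounded by $1$. (More generally $\int_{\bbR}(1+\tau^2)^{-p}\,d\tau<\infty$ iff $p>1/2$, and here $p=3/4$.) The ``near/far'' splitting you then perform is not an alternative to this bound — it \emph{is} the standard proof that this integral is finite, and the ``mild obstacle'' you describe at the end is therefore not an obstacle at all. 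The paper's proof is shorter precisely because it uses this finiteness directly: it bounds each exponent $\int_{I_{k,\eta}}\frac{1}{k^2}(1+(t-\eta/k)^2)^{-3/4}\,dt$ by $\frac{1}{k^2}\int_{\bbR}(1+s^2)^{-3/4}\,ds\lesssim \frac{1}{k^2}$ in one line, then sums. You should correct the claim of divergence; with that fixed, your write-up becomes a slightly more verbose version of the paper's proof rather than a different one.
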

\begin{proof}
For any $t,\eta$, it is easy to see that $m(t,\eta)\leq1$ and
\begin{align*}
\f{1}{m\Big(\f{2\eta}{2E(\sqrt{|\eta|})+1},\eta\Big)}&=\prod_{k=1}^{E(\sqrt{|\eta|})}\exp\left\{\int_{\f{2\eta}{2k+1}}^{\f{2\eta}{2k-1}}\f{1}{k^2}\Big(1+(t-\f{\eta}{k})^2\Big)^{-\f34}dt\right\}\\
&\lesssim \prod_{k=1}^{E(\sqrt{|\eta|})}\exp\Big\{\f{1}{k^2}\Big\}\lesssim 1.
\end{align*}
Thus we have proved the lemma.
\end{proof}
For any function $f(t,y)$ defined on $\mathbb{R}^+\times \mathbb{R}$, we define
\beno
m^{-1}f=m(t,\pa_y)^{-1}f\eqdef \mathcal{F}^{-1}\left(\f{\widehat{f}(t,\eta)}{m(t,\eta)}\right),
\eeno
and
\beno
\sqrt{\f{\pa_tm}{m}}f=\sqrt{\f{\pa_tm(t,\pa_y)}{m(t,\pa_y)}}f\eqdef\mathcal{F}^{-1}\left(\sqrt{\f{\pa_tm(t,\eta)}{m(t,\eta)}}f(t,\eta)\right).
\eeno

\subsection{Main energy estimates}


We define
\begin{align}
E_{\neq}\eqdef\f12\Big(\|\cA X_1\|_{L^2}^2+\|\cA X_2\|_{L^2}^2\Big),
\end{align}
\begin{align}
F_0\eqdef\|f_0\|_{H^s}^2,\quad H_{02}\eqdef\left\|m^{-1}\th_{02}\right\|_{H^s}^2,\quad 
V_0\eqdef\|\wt{u}_0^1\|_{H^s}^2.
\end{align}
From the time evolution of $E_{\neq}$, we get
\begin{align}\label{eq: E neq1}
&\f12\f{d}{dt}\Big(\|\cA X_1\|_{L^2}^2+\|\cA X_2\|_{L^2}^2\Big)+\nu\Big(\|\na_L\cA X_1\|_{L^2}^2+\|\na_L\cA X_2\|_{L^2}^2\Big)+\mathfrak{CK}(t)\nonumber\\
&=c\nu^{\f13}\Big(\|\cA X_1\|_{L^2}^2+\|\cA X_2\|_{L^2}^2\Big)
+\f{3}{2\s}\int\cA\pa_z^2(-\D_L)^{-1}\cN^{-2}X_1\cA X_2dzdy\\
&\quad-\int\cA\cN^{-1}(\wt{u}\cdot\na_L f)_{\neq}\cA X_1dzdy\nonumber\\
&\quad+\int\cA\Big(-2\g^2\s^{-1}\pa_z\cN(\wt{u}\cdot\na_L\th)_{\neq}-\s^{-1}\dot{\cN}(\wt{u}\cdot\na_L f)_{\neq}\Big)\cA X_2dzdy\nonumber\\
&\eqdef c\nu^{\f13}\|\cA X_1\|_{L^2}^2+c\nu^{\f13}\|\cA X_2\|_{L^2}^2+L+NL_1+NL_2,
\end{align}
whereas we used the fact that 
\begin{align*}
-\f{\s}{2}\int\cA(\cN^{-1})^2X_2\cA X_1dzdy+\f{\s}{2}\int\cA(\cN^{-1})^2X_1\cA X_2dzdy=0.
\end{align*}
Here in \eqref{eq: E neq1}, $L$ is the linear term, $NL_1, NL_2$ are the nonlinear terms and $\mathfrak{CK}$ stands for "Cauchy-Kovalevskaya",
\begin{align*}
\mathfrak{CK}(t)&\eqdef K\sum_{k\neq0}\int-\pa_t\cM_1(t,k,\eta)\Big(|\cA_k^s(t,\eta)\wh{X}_1(t,k,\eta)|^2+|\cA_k^s(t,\eta)\wh{X}_2(t,k,\eta)|^2\Big)d\eta\\
&\quad+K\sum_{k\neq0}\int-\pa_t\cM_2(t,k,\eta)\Big(|\cA_k^s(t,\eta)\wh{X}_1(t,k,\eta)|^2+|\cA_k^s(t,\eta)\wh{X}_2(t,k,\eta)|^2\Big)d\eta\\
&\quad+K\sum_{k\neq0}\int-\pa_t\cM_3(t,k,\eta)\Big(|\cA_k^s(t,\eta)\wh{X}_1(t,k,\eta)|^2+|\cA_k^s(t,\eta)\wh{X}_2(t,k,\eta)|^2\Big)d\eta\\
&\eqdef \sum_{j=1}^3\mathfrak{CK}_j(t).
\end{align*}

By Lemma \ref{low bound}, we get
\begin{align*}
\f14\nu\Big(\|\na_L\cA X_1\|_{L^2}^2+\|\na_L\cA X_2\|_{L^2}^2\Big)+\mathfrak{CK}_1&\geq \f14\nu^{\f13}\Big(\||D_z|^{\f13}\cA X_1\|_{L^2}^2+\||D_z|^{\f13}\cA X_2\|_{L^2}^2\Big)\\
&\eqdef 2\mathfrak{ED},
\end{align*}
where $\mathfrak{ED}$ stands for the enhanced dissipation.

On the other hand, by the definition of the multipliers $\cM_2$ and $\cM_3$, we have
\begin{align*}
\mathfrak{CK}_2&=K\Big(\|\pa_z(-\D_L)^{-\f12}\cA X_1\|_{L^2}^2+ \|\pa_z(-\D_L)^{-\f12}\cA X_2\|_{L^2}^2\Big),\\
\mathfrak{CK}_3&=K\Big(\|(-\D_L)^{-\f38}\cA X_1\|_{L^2}^2+\|(-\D_L)^{-\f38}\cA X_2\|_{L^2}^2\Big).
\end{align*}

For the linear term $L$,  by taking $K\geq3\s^{-1}$, we get that 
\begin{align*}
&\f{3}{2\s}\int\cA\big(\pa_z^2(-\D_L)^{-1}\cN^{-2}X_1\big)\cA X_2dzdy\\
&\leq \f{3}{2\s}\|\pa_z(-\D_L)^{-\f12}\cA X_1\|_{L^2}\|\pa_z(-\D_L)^{-\f12}\cA X_2\|_{L^2}\\
&\leq\f14K\|\pa_z(-\D_L)^{-\f12}\cA X_1\|_{L^2}^2+\f14K\|\pa_z(-\D_L)^{-\f12}\cA X_2\|_{L^2}^2.
\end{align*}
Thus, by choosing $0<c\leq\f18$ and denoting $\mathfrak{D}$ be the diffusion:
\begin{align*}
\mathfrak{D}\eqdef\f34\nu\Big(\|\na_L\cA X_1\|_{L^2}^2+\|\na_L\cA X_2\|_{L^2}^2\Big),
\end{align*}
we obtain that from \eqref{eq: E neq1},
\begin{align}\label{eq: E neq2}
&\f{d}{dt}E_{\neq}+\mathfrak{D}+\mathfrak{ED}+\mathfrak{CK}_2+\mathfrak{CK}_3\nonumber\\
&\leq\Big|\int\cA\cN^{-1}(\wt{u}\cdot\na_L f)_{\neq}\cA X_1dzdy\Big|\nonumber\\
&\quad+\Big|\int\cA\big(-2\g^2\s^{-1}\pa_z\cN(\wt{u}\cdot\na_L\th)_{\neq}-\s^{-1}\dot{\cN}(\wt{u}\cdot\na_L f)_{\neq}\big)\cA X_2dzdy\Big|\nonumber\\
&=|NL_1|+|NL_2|.
\end{align}

We define the following control as the bootstrap hypotheses for $t\geq0$.
\begin{align}\label{boot1}
E_{\neq}(t)+\f12\int_0^t\Big(\mathfrak{D}+\mathfrak{ED}+\mathfrak{CK}_2+\mathfrak{CK}_3\Big)(t')dt'\leq (4\ep\nu^{\f12})^2,
\end{align}
\begin{align}
\|f_0\|_{H^s}^2+\nu\int_0^t\|\pa_yf_0(t')\|_{H^s}^2dt'\leq (4\ep\nu^{\f14})^2,
\end{align}
\begin{align}\label{boot u0}
\|\wt{u}_0^1\|_{H^s}^2+\nu\int_0^t\|\pa_y\wt{u}_0^1(t')\|_{H^s}^2dt'\leq (4\ep\nu^{\f12})^2,
\end{align}
\begin{align}\label{boot th}
\left\|m^{-1}\th_{02}\right\|_{H^s}^2+\nu\int_0^t\left\|\pa_y\Big(m^{-1}\th_{02}\Big)(t')\right\|_{H^s}^2dt'+\int_0^t\left\|\sqrt{\f{\pa_tm}{m}}m^{-1}\th_{02}(t')\right\|_{H^s}^2dt'\leq (4\ep\nu^{\f34})^2.
\end{align}

Let $I^{\ast}$ be the connected set of time $t\geq0$ such that the bootstrap hypotheses \eqref{boot1}-\eqref{boot th} are all satisfied. We will work on the regularized solutions for which we know $E_{\neq}, F_0, H_{02}, V_0$ take values continuously in time, and hence $I^{\ast}$ is a closed interval $[0,T^{\ast}]$ with $T^{\ast}\geq0$. The bootstrap is complete if we show that $I^{\ast}$ is also open, which is the purpose of the following proposition, the proof of which constitutes the majority of this work.
\begin{proposition}\label{main propo}
Let $s\geq 6$ and $\g^2>\f14$. There exist $0<\ep_0=\ep_0(s,\g^2-\f14),\nu_0=\nu_0(s,\g^2-\f14)<1$, such that for all $0<\nu\leq\nu_0$ and $0<\ep\leq\ep_0$, such that if the bootstrap hypotheses \eqref{boot1}-\eqref{boot th} hold, then for any $t\in[0,T^{\ast}]$, we have the following properties $t\in[0,T^{\ast}]$: 
\begin{align}\label{res: E}
E_{\neq}(t)+\f12\int_0^t\Big(\mathfrak{D}+\mathfrak{ED}+\mathfrak{CK}_2+\mathfrak{CK}_3\Big)(t')dt'\leq (2\ep\nu^{\f12})^2,
\end{align}
\begin{align}\label{res: f0}
\|f_0\|_{H^s}^2+\nu\int_0^t\|\pa_yf_0(t')\|_{H^s}^2dt'\leq (2\ep\nu^{\f14})^2,
\end{align}
\begin{align}\label{res: u0}
\|\wt{u}_0^1\|_{H^s}^2+\nu\int_0^t\|\pa_y\wt{u}_0^1(t')\|_{H^s}^2dt'\leq (2\ep\nu^{\f12})^2,
\end{align}
\begin{align}\label{res: th02}
\left\|m^{-1}\th_{02}\right\|_{H^s}^2+\nu\int_0^t\left\|\pa_y\Big(m^{-1}\th_{02}\Big)(t')\right\|_{H^s}^2dt'+\int_0^t\Big\|\sqrt{\f{\pa_tm}{m}}m^{-1}\th_{02}(t')\Big\|_{H^s}^2dt'\leq (2\ep\nu^{\f34})^2,
\end{align}
from which it follows that $T^{\ast}=+\infty$.
\end{proposition}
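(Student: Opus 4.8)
The plan is to run the standard continuity/bootstrap argument. Since on the regularized solutions the quantities $E_{\neq}, F_0, V_0, H_{02}$ depend continuously on $t$ and start strictly below the thresholds in \eqref{boot1}--\eqref{boot th}, it suffices to show that the bootstrap hypotheses \eqref{boot1}--\eqref{boot th} imply the \emph{improved} bounds \eqref{res: E}--\eqref{res: th02}, with the constant $4$ replaced by $2$; this forces $I^{\ast}$ to be open as well as closed, so $I^{\ast}=[0,\infty)$ and $T^{\ast}=+\infty$. Thus the whole proof reduces to four energy estimates, which I would carry out from the easiest (zero modes with room to spare) to the hardest (the coupled $X_1,X_2$ system). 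For \eqref{eq: f0} and \eqref{eq: u0}, I run an $H^s$ energy estimate: the dissipative term $\nu\|\pa_y\cdot\|_{H^s}^2$ is favorable, and the only forcing is $\pa_y(\wt{u}_{\neq}^2 f_{\neq})_0$, resp. $\pa_y(\wt{u}_{\neq}^2\wt{u}_{\neq}^1)_0$; I move one $\pa_y$ onto the test function to pair it with the diffusion, unpack $f_{\neq},\wt{u}_{\neq}$ from $X_1,X_2$ via \eqref{def f0 th0} and the definitions of $\mathcal{N},\dot{\mathcal{N}}$, and estimate the resulting quadratic product in $L^2_tH^s$ using the inviscid-damping and enhanced-dissipation gains encoded in $\mathfrak{D},\mathfrak{ED},\mathfrak{CK}_2,\mathfrak{CK}_3$ together with \eqref{boot1}. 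The bookkeeping works because $F_0$ and $V_0$ are of size $\nu^{1/2}$ while the forcing is quadratic of size $\sim(\ep\nu^{1/2})^2$, leaving a full power of $\ep$ (and no loss in $\nu$) of slack; for $\wt{u}_0^1$ one additionally uses that $\wt{u}_{\neq}^2$ decays like $\langle t\rangle^{-1}$.

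For the $\th_{02}$ bound I conjugate \eqref{eq: th02} by $m^{-1}$ and run the $H^s$ estimate on $m^{-1}\th_{02}$. Since $\pa_t m<0$, the identity $\pa_t(m^{-1})=-(\pa_t m/m)\,m^{-1}$ produces exactly the good damping term $\|\sqrt{\pa_t m/m}\,m^{-1}\th_{02}\|_{H^s}^2$ appearing in \eqref{res: th02}. The forcing $(\wt{u}_{\neq}\cdot\na_L\th_{\neq})_0$ is split in frequency along the resonance intervals $I_{k,\eta}$: off-resonance the time factors inside $\na_L$ are harmless and one uses the enhanced dissipation of the non-zero modes, while on $I_{k,\eta}$ the weight $m$ was tailored precisely so that the loss is absorbed by $\|\sqrt{\pa_t m/m}\,m^{-1}\th_{02}\|_{H^s}$ paired against the inviscid-damping budget $\mathfrak{CK}_2,\mathfrak{CK}_3$ of the non-zero modes controlled by \eqref{boot1}. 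That $H_{02}$ is of size $\nu^{3/4}$ is consistent with this distribution of the quadratic forcing.

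The heart of the proof is the coupled non-zero-mode estimate, starting from \eqref{eq: E neq2}: bound $|NL_1|,|NL_2|$. For each transport term $(\wt{u}\cdot\na_L f)_{\neq}$ and $(\wt{u}\cdot\na_L\th)_{\neq}$ I perform a paraproduct decomposition separating reaction terms (low-frequency factor is the zero mode $\wt{u}_0^1$, controlled by \eqref{boot u0}, with $\na_L$ on the non-zero mode, gaining inviscid-damping decay paid for by $\mathfrak{CK}_2+\mathfrak{CK}_3$), transport terms (both factors non-zero, handled by $\mathfrak{ED}$ and $\mathfrak{D}$), and a remainder; on each piece I commute the multiplier $\mathcal{A}_k^s$ onto one factor using Lemma \ref{M lem} and the elementary inequality $\langle k,\eta\rangle^s\lesssim\langle k,\xi\rangle^s+\langle k,\eta-\xi\rangle^s$, carefully tracking the weights $\mathcal{N},\dot{\mathcal{N}}$ that carry the $\langle t\rangle^{1/2}$ transient growth of the vorticity against the $\langle t\rangle^{-1/2}$ of $\th$, while the skew-symmetric $\tfrac{\s}{2}(\mathcal{N}^{-1})^2$ coupling in \eqref{eq: X1X2} cancels and keeps the linear part neutral. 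This leads to
\begin{align*}
\f{d}{dt}E_{\neq}+\mathfrak{D}+\mathfrak{ED}+\mathfrak{CK}_2+\mathfrak{CK}_3\lesssim\big(\ep+\ep\nu^{1/6}\big)\big(\mathfrak{D}+\mathfrak{ED}+\mathfrak{CK}_2+\mathfrak{CK}_3\big)+(\text{lower order}),
\end{align*}
so for $\ep_0,\nu_0$ small the good terms on the left absorb the right side; integrating in time and inserting \eqref{boot1}--\eqref{boot th} yields $E_{\neq}(t)+\tfrac12\int_0^t(\cdots)\le(\ep\nu^{1/2})^2+C\ep^3\nu\le(2\ep\nu^{1/2})^2$, and combining with the three zero-mode estimates closes the bootstrap. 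The main obstacle is this last step, and within it the reaction terms in which $\wt{u}_0^1$ — whose $H^s$ norm is only $O(\nu^{1/2})$, the same scale as the entire non-zero energy — multiplies a non-zero mode hit by $\na_L$: one must extract precisely the inviscid-damping factor $|k|(k^2+(\eta-kt)^2)^{-1/2}$ so the term is dominated by $\mathfrak{CK}_2$ (or $\mathfrak{CK}_3$ for small $|k|$), with no spare power of $\nu$, while keeping the $\mathcal{N}$-weights from overcounting the $\sqrt{t}$ growth; the dovetailing of the resonant construction of $m$ with the non-zero inviscid-damping budget in the $\th_{02}$ estimate is the secondary difficulty, the remaining pieces being routine within the framework of Section 2.
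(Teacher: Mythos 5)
Your proposal follows essentially the same route as the paper: the continuity argument reduces Proposition \ref{main propo} to improving \eqref{boot1}--\eqref{boot th}; the improved bounds \eqref{res: f0}--\eqref{res: th02} come from $H^s$ energy estimates on $f_0$, $\wt{u}_0^1$ and the $m$-weighted estimate on $\th_{02}$ (Lemmas \ref{lem f0}, \ref{lem u0}, \ref{lem th02}, i.e.\ Proposition \ref{prop: zero}); and \eqref{res: E} comes from \eqref{eq: E neq2} after decomposing $NL_1,NL_2$ into zero/non-zero and non-zero/non-zero interactions, commuting $\cA$ via Lemma \ref{M lem}, and absorbing everything into $\mathfrak{D},\mathfrak{ED},\mathfrak{CK}_2,\mathfrak{CK}_3$ for $\ep$ small, exactly as in Lemmas \ref{lem: I}, \ref{lem: J} and the assembly of Section 2.5. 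At the level of detail given, the outline and the final absorption/integration step are consistent with the paper's accounting (the ``lower order'' terms indeed contribute $O(\ep^3\nu)$ after time integration using the bootstrap bound on $\nu\int\|\pa_yf_0\|_{H^s}^2$).

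Two bookkeeping caveats. First, your justification of the CK term for $\th_{02}$ is backwards as written: with the paper's construction, $m$ is obtained by integrating $\pa_tm<0$ \emph{backwards} from $m=1$ at $t=2|\eta|$, so $m\geq1$ and $m$ is decreasing in $t$; conjugating \eqref{eq: th02} by $m^{-1}$ then produces the term $-\f{\pa_tm}{m}\|m^{-1}\th_{02}\|_{H^s}^2\geq0$ on the growth side, \emph{not} a damping term. The coercive term $\big\|\sqrt{|\pa_tm|/m}\,m^{-1}\th_{02}\big\|_{H^s}^2$, which is what must absorb the resonant piece $\cK_{22}$ against $\mathfrak{CK}_3$, appears with a good sign only if one divides by a non-decreasing weight (equivalently, reverses the sign in the ODE defining $m$, or multiplies by the decreasing $m$ instead of dividing). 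Since $m\approx1$ this is purely a matter of convention and the paper's own display \eqref{Gamma12} carries the same slip, so it is not an obstruction to the method, but your ``since $\pa_tm<0$'' reasoning would not survive being written out. Second, the bootstrap sizes are $F_0\leq(4\ep\nu^{\f14})^2$ while $V_0\leq(4\ep\nu^{\f12})^2$, so ``$F_0$ and $V_0$ are of size $\nu^{1/2}$'' is off for one of the two; this does not affect the slack count, since Proposition \ref{prop: zero} gives forcing contributions $C\ep^3\nu^{\f12}$ and $C\ep^3\nu^{\f76}$ respectively, leaving a full power of $\ep$ to spare in each case.
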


The remainder of the paper is devoted to the proof of Proposition \ref{main propo}. 
One of the key estimates is to control the nonlinear terms $NL_1$ and $NL_2$. By using the fact that
\begin{align*}
\wt{u}\cdot\na_Lf=\wt{u}_0^1\pa_zf_{\neq}+\wt{u}_{\neq}^2\pa_yf_0+\wt{u}_{\neq}^1\pa_zf_{\neq}+\wt{u}_{\neq}^2(\pa_y-t\pa_z)f_{\neq},
\end{align*}
and 
\begin{align*}
\wt{u}\cdot\na_L\th=\wt{u}_0^1\pa_z\th_{\neq}+\wt{u}_{\neq}^2\pa_y(\th_{01}+\th_{02})+\wt{u}_{\neq}^1\pa_z\th_{\neq}+\wt{u}_{\neq}^2(\pa_y-t\pa_z)\th_{\neq},
\end{align*}
we have
\begin{align}\label{eq: NL1}
NL_1
&=-\int\cA\cN^{-1}(\wt{u}_0^1\pa_zf_{\neq})_{\neq}\cA X_1dzdy-\int\cA\cN^{-1}(\wt{u}_{\neq}^2\pa_yf_0)_{\neq}\cA X_1dzdy\nonumber\\
&\quad -\int\cA\cN^{-1}(\wt{u}_{\neq}^1\pa_zf_{\neq})_{\neq}\cA X_1dzdy-\int\cA\cN^{-1}\big(\wt{u}_{\neq}^2(\pa_y-t\pa_z)f_{\neq}\big)_{\neq}\cA X_1dzdy\nonumber\\
&\eqdef I_1+I_2+I_3+I_4,
\end{align}
and
\begin{align}\label{eq: NL2}
NL_2
&=\int\cA\Big(-2\g^2\s^{-1}\pa_z\cN(\wt{u}_0^1\pa_z\th_{\neq})_{\neq}-\s^{-1}\dot{\cN}(\wt{u}_0^1\pa_zf_{\neq}) _{\neq}\Big)\cA X_2dzdy\nonumber\\
&\quad-\f{2\g^2}{\s}\int\cA\pa_z\cN\big(\wt{u}_{\neq}^2\pa_y(\th_{01}+\th_{02})\big)_{\neq}\cA X_2dzdy-\s^{-1}\int\cA\dot{\cN}(\wt{u}_{\neq}^2\pa_yf_0)_{\neq}\cA X_2dzdy\nonumber\\
&\quad-\f{2\g^2}{\s}\int\cA\pa_z\cN(\wt{u}_{\neq}^1\pa_z\th_{\neq})_{\neq}\cA X_2dzdy-\f{2\g^2}{\s}\int\cA\pa_z\cN\big(\wt{u}_{\neq}^2(\pa_y-t\pa_z)\th_{\neq}\big)_{\neq}\cA X_2dzdy\nonumber\\
&\quad-\s^{-1}\int\cA\dot{\cN}(\wt{u}_{\neq}^1\pa_zf_{\neq})_{\neq}\cA X_2dzdy-\s^{-1}\int\cA\dot{\cN}\big(\wt{u}_{\neq}^2(\pa_y-t\pa_z)f_{\neq}\big)\cA X_2dzdy\nonumber\\
&\eqdef \sum_{i=1}^7J_i.
\end{align}

In Section \ref{sec zero-nonzero mode}, we will give the estimates of the interactions between the zero mode and the nonzero mode: $I_1,I_2,J_1,J_2,J_3$. In Section \ref{sec nonzero modes}, we will give the estimates of the interactions between non-zero modes: $I_3,I_4,J_4,J_5,J_6,J_7$. In precise, we mainly prove the following lemmas.
\begin{lemma}\label{lem: I}
Under the bootstrap hypotheses, for $t\in [0,T^{\ast}]$, it holds that 
\begin{align}
\label{eq:I_1}|I_1|&\leq C\ep\nu^{\f{1}{12}}(\mathfrak{ED})^{\f12}(\mathfrak{CK}_2)^{\f12}+C\ep\nu^{\f16}\mathfrak{ED},\\
\label{eq:I_2}|I_2|&\leq C\ep\nu^{\f14}\mathfrak{CK}_2+C\ep^3\nu^{\f54}\langle t\rangle^{-2}+C\ep\nu^{\f38}(\mathfrak{CK}_2)^{\f34}\|\pa_y\langle D_y\rangle^sf_0\|_{L^2}^{\f12}\\
\nonumber&\quad+C\ep^{\f32}\nu^{\f98}\langle t\rangle^{-\f32}
\|\pa_y\langle D_y\rangle^sf_0\|_{L^2}^{\f12},\\
\label{eq:I_3I_4}|I_3|+|I_4|&\leq C\ep\mathfrak{D}+C\ep\mathfrak{CK}_2.
\end{align}
\end{lemma}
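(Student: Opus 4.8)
The plan is to estimate the three terms $I_1$, $I_2$, $I_3+I_4$ by the standard paraproduct decomposition, splitting each bilinear form according to which factor carries the bulk of the frequency, and then exploiting the structure of the multipliers $\cA$, $\cN$, and the zero-mode/nonzero-mode smallness from the bootstrap hypotheses \eqref{boot1}--\eqref{boot th}. In each case the target is to bound the integral by a product of (square roots of) the dissipation functionals $\mathfrak{D}$, $\mathfrak{ED}$, $\mathfrak{CK}_2$, $\mathfrak{CK}_3$ together with the explicitly small prefactors $\ep \nu^{a}$; the powers of $\nu$ appearing on the right are dictated by the relative sizes $E_{\neq}\sim \ep^2\nu$, $\|\wt u_0^1\|_{H^s}^2\sim \ep^2\nu$, $\|f_0\|_{H^s}^2\sim \ep^2\nu^{1/2}$ prescribed by the bootstrap.

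For $I_1 = -\int \cA\cN^{-1}(\wt u_0^1\pa_z f_{\neq})_{\neq}\,\cA X_1\,dzdy$, the key observation is that $\wt u_0^1$ depends only on $y$, so in Fourier space the convolution only shifts the $\eta$ variable while leaving $k\neq 0$ fixed; hence $\cN^{-1}$ and $\pa_z$ act cleanly and one should write $\cN^{-1}(\wt u_0^1\pa_z f_{\neq}) = \wt u_0^1 \cN^{-1}\pa_z f_{\neq} + [\text{commutator}]$. The transport-type piece $\int \cA(\wt u_0^1\pa_z X_1)\cA X_1$ is handled by integrating the $\pa_z$ by parts (it hits either $\wt u_0^1$, which is $0$, or is symmetric) so only the commutator survives; for the commutator between $\cA$ (equivalently $\cM\langle k,\eta\rangle^s$) and multiplication by $\wt u_0^1$ one invokes Lemma \ref{M lem}, which gives a gain of $\nu^{1/3}|k|^{-1/3}+|k|^{-1}$ on high-frequency transfers plus a term with one fewer derivative on $\wt u_0^1$. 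One then pairs the $\nu^{1/3}|k|^{2/3}$-type factor against $\mathfrak{ED}$ on one side and $\|\pa_z(-\D_L)^{-1/2}\cA X_1\|_{L^2}$, i.e.\ $\mathfrak{CK}_2^{1/2}$, on the other, together with $\|\wt u_0^1\|_{H^s}\lesssim \ep\nu^{1/2}$; counting powers, $\nu^{1/2}\cdot\nu^{1/3}\cdot(\text{weights})$ produces the claimed $\ep\nu^{1/12}(\mathfrak{ED})^{1/2}(\mathfrak{CK}_2)^{1/2}$ and, from the lower-order commutator piece where both derivatives are absorbed by $\mathfrak{ED}$, the term $\ep\nu^{1/6}\mathfrak{ED}$.

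For $I_2 = -\int \cA\cN^{-1}(\wt u_{\neq}^2\pa_y f_0)_{\neq}\cA X_1$, now $f_0=f_0(y)$ is the low-regularity zero mode appearing to the power $\nu^{1/4}$, and $\wt u_{\neq}^2 = \pa_z\phi = \pa_z(-\D_L)^{-1}f_{\neq}$, so $\wt u_{\neq}^2$ gains two derivatives and a factor $\langle t\rangle^{-1}$-type inviscid-damping weight relative to $f_{\neq}=\cN X_1$. One splits into the regime where the output frequency is controlled by $\wt u_{\neq}^2$ (then $\pa_y f_0$ is at low frequency, contributing $\|f_0\|_{H^s}$, and the $\wt u_{\neq}^2$ factor is paired with $\mathfrak{CK}_2^{1/2}$ via $\pa_z(-\D_L)^{-1/2}$, giving the term $\ep\nu^{1/4}\mathfrak{CK}_2$ after using $\|f_0\|_{H^s}\lesssim \ep\nu^{1/4}$ and $\|\cA X_1\|\lesssim \ep\nu^{1/2}$, but one power of $\cA X_1$ must come from the a priori bound rather than $\mathfrak D$), and the regime where $\pa_y f_0$ carries the output frequency (then one genuinely sees $\|\pa_y\langle D_y\rangle^s f_0\|_{L^2}$, which is only controlled in the $\nu\int_0^t$-sense, so it appears to the power $\tfrac12$ after Cauchy--Schwarz in time, matched against $\mathfrak{CK}_2^{3/4}$ from $\wt u_{\neq}^2$ and a leftover $\langle t\rangle^{-3/2}$ decay, producing the third and fourth terms). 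The purely ``slack'' term $\ep^3\nu^{5/4}\langle t\rangle^{-2}$ comes from the worst balance where one uses all three a priori bounds, with $\langle t\rangle^{-2}$ from two copies of the inviscid-damping/decay weight carried by $\wt u^2_{\neq}$.

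For $I_3+I_4$, the interactions between nonzero modes, the strategy is the classical one from the Couette-flow literature: write $\wt u_{\neq}^1\pa_z f_{\neq}$ and $\wt u_{\neq}^2(\pa_y-t\pa_z)f_{\neq}$ via the stream function so that $\wt u_{\neq}=\na_L^\perp\phi$, note the divergence-free structure kills the ``transport of itself'' piece after integrating by parts against $\cA X_1$, and then everything reduces to commutators where $\cA$, $\cN^{-1}$, and $\phi$ are moved across; the two derivatives gained on $\wt u_{\neq}$ through $\D_L^{-1}$ are enough that after a Littlewood--Paley/paraproduct split the high-frequency output is always dominated by $\|\na_L\cA(\cdot)\|_{L^2}$ (i.e.\ $\mathfrak D^{1/2}$) or by $\mathfrak{CK}_2^{1/2}$, and the remaining factor is bounded by $E_{\neq}^{1/2}\lesssim \ep\nu^{1/2}$, while the frequency-exchange commutator constants from Lemma \ref{M lem} are $O(1)$; collecting, one gets $C\ep(\mathfrak D+\mathfrak{CK}_2)$ with no extra power of $\nu$ needed. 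I expect the main obstacle to be the bookkeeping in $I_2$: correctly matching the three distinct functionals $\mathfrak{CK}_2^{1/2}$, $\mathfrak{CK}_2^{3/4}$, and $\|\pa_y\langle D_y\rangle^s f_0\|_{L^2}^{1/2}$ to the right paraproduct pieces so that the time integral of the left side closes against the bootstrap quantities with exactly the stated powers $\nu^{1/4}$, $\nu^{5/4}$, $\nu^{3/8}$, $\nu^{9/8}$ — in particular verifying that the $\langle t\rangle$-weights produced by the inviscid damping of $\wt u^2_{\neq}$ are integrable against $\mathfrak{CK}_2$ or summable against the a priori bounds, which is where the precise exponents $-2$ and $-3/2$ in \eqref{eq:I_2} come from.
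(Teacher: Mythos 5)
Your overall architecture for $I_1$ and $I_2$ matches the paper -- commutator decomposition plus integration by parts to kill the transport-of-itself piece for $I_1$, and a high/low split on the $f_0$ frequency for $I_2$ -- but there are two concrete power-counting slips and one place where you take a genuinely different route.

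For $I_1$, the factor that ends up multiplying the bootstrap norm is not $\|\wt u_0^1\|_{H^s}\sim\ep\nu^{1/2}$ but $\|f_0\|_{H^s}\sim\ep\nu^{1/4}$: in the commutator integrand $|\eta-\xi|\,|\wh{\wt u}_0^1(\eta-\xi)|=|\wh{f}_0(\eta-\xi)|$, since $\wt u_0^1=-\pa_y^{-1}f_0$ on the zero mode. The exponent $\nu^{1/12}$ arises as $\nu^{1/4}\cdot\nu^{-1/6}$ from $\|f_0\|_{H^s}\lesssim\ep\nu^{1/4}$ and $\|\cA X_1\|_{L^2}\lesssim\nu^{-1/6}(\mathfrak{ED})^{1/2}$, paired with $\mathfrak{CK}_2^{1/2}$; your $\nu^{1/2}\cdot\nu^{1/3}$ accounting is off and would give a different exponent. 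For $I_2$ your claim that "one power of $\cA X_1$ must come from the a priori bound rather than $\mathfrak D$" is incorrect and, if carried out, would leave a term $\ep^2\nu^{3/4}\mathfrak{CK}_2^{1/2}$ whose time integral does not close. The paper instead produces \emph{two} copies of $\|\pa_z(-\D_L)^{-1/2}\cA X_1\|_{L^2}\sim\mathfrak{CK}_2^{1/2}$: one from the $\cN^{-1}$ weight on the outgoing $X_1$ and one transferred from $\wt u_{\neq}^2$ via the inequality $\f{k^2+(\eta-kt)^2}{k^2+(\eta-\xi-kt)^2}\lesssim 1+\xi^2$ (putting the extra $\langle\xi\rangle$ on $f_0$). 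That manipulation is what makes $\ep\nu^{1/4}\mathfrak{CK}_2$ appear, and it is missing from your outline.

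For $I_3+I_4$ you depart from the paper. The paper does \emph{not} integrate by parts to exploit $\na_L\cdot\wt u_{\neq}=0$; it estimates these terms directly via the pointwise Fourier bounds $|\wh{\wt u}^1_{\neq}|\lesssim|k|^{-1/2}(k^2+(\eta-kt)^2)^{-1/4}|\wh X_1|$, $|\wh{\wt u}^2_{\neq}|\lesssim|k|^{1/2}(k^2+(\eta-kt)^2)^{-3/4}|\wh X_1|$, Young's inequality, and interpolation, landing in $\mathfrak{D}$ and $\mathfrak{CK}_2$ with $\|\cA X_1\|_{L^2}\lesssim\ep\nu^{1/2}$ as the small factor. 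No cancellation is needed because $\wt u_{\neq}$ gains two derivatives through $(-\D_L)^{-1}$, so, unlike the $I_1$ case where $\wt u_0^1$ gains nothing in $z$, there is no derivative loss to offset. Your proposed IBP route would require commuting $\cN^{-1}$ past multiplication by a nonzero mode $\wt u_{\neq}$; this changes both $k$ and $\eta$ simultaneously and the difference $|k|^{1/2}(k^2+(\eta-kt)^2)^{-1/4}-|k-l|^{1/2}((k-l)^2+(\eta-\xi-(k-l)t)^2)^{-1/4}$ does not reduce to a simple mean-value/one-variable commutator as it did in $I_1$. You do not address this, so as written the $I_3+I_4$ argument has a gap, and in any case it is an unnecessary detour compared with the paper's direct paraproduct bound.
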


\begin{lemma}\label{lem: J}
Under the bootstrap hypotheses, for $t\in [0,T^{\ast}]$, it holds that 
\begin{align}
\label{eq:J_1}|J_1|&\leq C\ep\nu^{\f{1}{12}}(\mathfrak{ED})^{\f12}(\mathfrak{CK}_2)^{\f12}+C\ep\nu^{\f16}(\mathfrak{ED})^{\f12}(\mathfrak{CK}_2)^{\f12}+C\ep\nu^{\f16}\mathfrak{ED},\\
\label{eq:J_2}|J_2|&\leq C\ep\nu^{\f13}(\mathfrak{ED})^{\f12}(\mathfrak{CK}_2)^{\f12}+C\ep\nu^{\f12}\mathfrak{D}^{\f14}(\mathfrak{CK}_2)^{\f14}\|\pa_y\th_{02}\|_{H^s},\\
\label{eq:J_3}|J_3|&\leq C\ep\nu^{\f14}\mathfrak{CK}_2+C\ep^3\nu^{\f54}\langle t\rangle^{-2}+C\ep\nu^{\f38}(\mathfrak{CK}_2)^{\f34}\|\pa_y\langle D_y\rangle^sf_0\|_{L^2}^{\f12}\\
\nonumber&\quad+C\ep^{\f32}\nu^{\f98}\langle t\rangle^{-\f32}
\|\pa_y\langle D_y\rangle^sf_0\|_{L^2}^{\f12},\\
\label{eq:J_4567}|J_4|+|J_5|+|J_6|+|J_7|&\leq C\ep\mathfrak{D}+C\ep\mathfrak{CK}_2.
\end{align}
\end{lemma}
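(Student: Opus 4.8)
The plan is to prove Lemma \ref{lem: J} by estimating each of the seven terms $J_1,\dots,J_7$ appearing in the decomposition \eqref{eq: NL2}, splitting them according to which structural feature of the nonlinearity is dominant. For the mixed zero/nonzero interactions $J_1, J_2, J_3$, the key principle is that the zero-mode factor ($\wt u_0^1$, $\th_{01}$, $\th_{02}$, $f_0$) carries extra $\nu$-smallness from the bootstrap hypotheses \eqref{boot u0}, the heat estimate for $\th_{01}$, \eqref{boot th}, and \eqref{boot1} respectively, while the nonzero-mode factors are paired against the dissipation/inviscid-damping functionals $\mathfrak{ED},\mathfrak{CK}_2,\mathfrak D$. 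For $J_1$ (the $\wt u_0^1$ interactions) I would move $\cA$ onto each factor via Lemma \ref{M lem}, write $\wt u_0^1 = \pa_y(-\D)^{-1}$ applied to a zero-mode quantity so that a $\pa_z$ in the transport term can be integrated by parts against the vorticity stretching, and bound $\|\wt u_0^1\|_{H^s}\lesssim \ep\nu^{1/2}$ from \eqref{boot u0}; the two nonzero factors then produce $(\mathfrak{ED})^{1/2}(\mathfrak{CK}_2)^{1/2}$, and the $\nu^{1/12}$ versus $\nu^{1/6}$ split comes from whether the transport derivative lands on the low- or high-frequency factor (the $\cN^{-1},\dot{\cN},\cN$ weights contribute the $|k|^{\pm 1/2}(k^2+(\eta-kt)^2)^{\pm}$ powers that convert between $\mathfrak{ED}$ and $\mathfrak{CK}_2$).

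For $J_2$ (the $\wt u_{\neq}^2\pa_y(\th_{01}+\th_{02})$ interaction), I would treat the two pieces separately: against $\th_{01}$ use its smoothing from the heat semigroup (one can afford $\|\pa_y\th_{01}\|_{H^{s+2}}$, which is much smaller than the $\nu^{1/2}$ threshold after the time-weighted inviscid damping of $\wt u_{\neq}^2$), yielding the $\nu^{1/3}(\mathfrak{ED})^{1/2}(\mathfrak{CK}_2)^{1/2}$ term; against $\th_{02}$ use \eqref{boot th} — crucially the multiplier $m$ was designed (Section \ref{multiplier m}) so that $m^{-1}\th_{02}$ is controlled, and passing $m$ through costs at most a constant by $m\approx 1$, producing the $\nu^{1/2}\mathfrak D^{1/4}(\mathfrak{CK}_2)^{1/4}\|\pa_y\th_{02}\|_{H^s}$ term where the fractional powers reflect an interpolation between the diffusion and the inviscid damping applied to $\wt u_{\neq}^2 = \pa_z(-\D_L)^{-1}f_{\neq}$. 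The term $J_3$ has exactly the same structure as $I_2$ (it is $\dot{\cN}(\wt u_{\neq}^2\pa_y f_0)$ against $\cA X_2$ instead of $\cN^{-1}(\wt u_{\neq}^2\pa_y f_0)$ against $\cA X_1$), so \eqref{eq:J_3} is proved by repeating the proof of \eqref{eq:I_2} verbatim, noting $\dot{\cN}$ is a bounded-order correction of $\cN^{-1}$ in the relevant frequency regime; the $\ep^3\nu^{5/4}\langle t\rangle^{-2}$ and $\ep^{3/2}\nu^{9/8}\langle t\rangle^{-3/2}$ terms come from pairing against $\|f_0\|_{H^s}\lesssim\ep\nu^{1/4}$ together with the $\langle t\rangle^{-1}$ decay of $\wt u_{\neq}^2$.

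For the purely nonzero-mode interactions $J_4,\dots,J_7$, the estimate \eqref{eq:J_4567} is the most robust: here all factors are nonzero modes controlled by $E_{\neq}\lesssim\ep^2\nu$, so one factor is put in $L^\infty$ (losing at most $\langle\na\rangle^{s}$ with $s\ge 6$ large enough that $H^{s-2}\hookrightarrow L^\infty$ on $\bbT\times\bbR$ with room to spare) at cost $\ep\nu^{1/2}$, the derivative $\na_L$ is absorbed into $\mathfrak D$, the extra $\pa_z$ and the $\cN,\dot{\cN}$ weights are matched against $\mathfrak{CK}_2$, and the overall $\ep\nu^{1/2}\cdot\nu^{-1/2}=\ep$ prefactor is the $C\ep$ in \eqref{eq:J_4567} — this is exactly parallel to \eqref{eq:I_3I_4} in Lemma \ref{lem: I}. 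Throughout, the commutator Lemma \ref{M lem} is the workhorse for redistributing $\cA$ among the three factors in each trilinear integral, and the frequency decomposition into resonant/non-resonant regions (using $t(\eta)$ and the intervals $I_{k,\eta}$) governs when $\cM_2$ vs $\cM_3$ provides the needed decay. I expect the main obstacle to be the $J_2$ estimate against $\th_{02}$: one must carefully exploit that $m$ dissipates precisely on the resonant intervals where $\wt u_{\neq}^2$ fails to decay, so that the product of the (weak) decay of $\wt u_{\neq}^2$ off-resonance and the $\sqrt{\pa_t m/m}$ gain on-resonance closes with the stated fractional exponents — getting the powers of $\nu$ and the interpolation weights to match \eqref{eq:J_2} exactly is the delicate bookkeeping, and it is where the design of $m$ in Section \ref{multiplier m} must be used to its full strength.
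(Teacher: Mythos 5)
Your proposal follows essentially the same route as the paper: the same decomposition into $J_1,\dots,J_7$, commutator estimates together with the integration-by-parts cancellation of the transport term for $J_1$, the $\th_{01}$/$\th_{02}$ splitting with the heat-semigroup bound and the bootstrap hypotheses for $J_2$, $J_3$ handled exactly as $I_2$, and Fourier-side product/interpolation bounds against $\mathfrak{D}$ and $\mathfrak{CK}_2$ using the $E_{\neq}\lesssim \ep^2\nu$ smallness for $J_4$--$J_7$. The only misplaced point is your closing worry about $J_2$: the resonance multiplier $m$ and the $\sqrt{\pa_tm/m}$ gain are not used in the proof of \eqref{eq:J_2} at all (they enter only in the separate energy estimate for $\th_{02}$ in Section \ref{section zero mode}); the paper's $J_{22}$ bound is a direct Young-plus-interpolation estimate that leaves $\|\pa_y\th_{02}\|_{H^s}$ as an explicit factor, exactly as your earlier sentence with $m\approx 1$ describes.
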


In Section \ref{section zero mode}, we will prove the following proposition. 
\begin{proposition}\label{prop: zero}
Under the bootstrap hypotheses, for $t\in [0,T^{\ast}]$, there holds that  
\begin{align}\label{est: f0}
\|f_0\|_{H^s}^2+\nu\int_0^t\|\pa_yf_0(t')\|_{H^s}^2dt'\leq \|P_0f_{in}\|_{H^s}^2+C\ep^3\nu^{\f12},
\end{align}
\begin{align}\label{est: u0}
\|\wt{u}_0^1\|_{H^s}^2+\nu\int_0^t\|\pa_y\wt{u}_0^1(t')\|_{H^s}^2dt'\leq \|P_0\wt{u}^1_{in}\|_{H^s}^2+C\ep^3\nu^{\f76},
\end{align}
\begin{align}\label{est: th02}
\|m^{-1}\th_{02}\|_{H^s}^2+\nu\int_0^t\|\pa_y\Big(m^{-1}\th_{02}\Big)(t')\|_{H^s}^2dt'+\int_0^t\Big\|\sqrt{\f{\pa_tm}{m}}m^{-1}\th_{02}(t')\Big\|_{H^s}^2dt'\leq C\ep^3\nu^{\f32}.
\end{align}
\end{proposition}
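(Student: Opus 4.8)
The goal is to prove Proposition~\ref{prop: zero}, i.e.\ the three zero-mode energy estimates \eqref{est: f0}, \eqref{est: u0}, \eqref{est: th02}, working under the bootstrap hypotheses \eqref{boot1}--\eqref{boot th}. All three equations \eqref{eq: f0}, \eqref{eq: u0}, \eqref{eq: th02} are one-dimensional heat equations in $y$ with divergence-form or advective forcing built from the non-zero modes. The common strategy is a weighted $H^s$ energy estimate: apply $\langle D_y\rangle^s$ (or $m^{-1}\langle D_y\rangle^s$ for $\th_{02}$), pair with the solution, use the dissipation term $\nu\|\pa_y\langle D_y\rangle^s(\cdot)\|_{L^2}^2$ to absorb the forcing after integrating by parts to move one $\pa_y$ onto the test function, and then bound the resulting bilinear/trilinear expressions in the non-zero modes using the pointwise linear decay \eqref{eq:linear-est}, the multiplier equivalences \eqref{eq:A bound}, $\cM\approx1$, $m\approx1$, and the bootstrap bounds on $E_{\neq}$ and its dissipation integral. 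Throughout one uses that $\wt u_{\neq}$ is recovered from $f_{\neq}$ via $\D_L$, so $\|\wt u^2_{\neq}\|$ gains the inviscid-damping factor $\langle t\rangle^{-1}$ (more precisely via $\pa_z(-\D_L)^{-1}$, which in Fourier is $\tfrac{ik}{k^2+(\eta-kt)^2}$) and $\|\wt u^1_{\neq}\|$ also decays; combined with the $e^{-c\nu^{1/3}t}$ from $\cM_1$ these give time-integrable bounds.

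\textbf{Step 1: the $f_0$ estimate \eqref{est: f0}.} Starting from \eqref{eq: f0}, test with $\langle D_y\rangle^{2s}f_0$ to get
\begin{align*}
\f12\f{d}{dt}\|f_0\|_{H^s}^2+\nu\|\pa_yf_0\|_{H^s}^2=-\int\langle D_y\rangle^sf_0\,\langle D_y\rangle^s\pa_y(\wt u^2_{\neq}f_{\neq})_0\,dy=\int\pa_y\langle D_y\rangle^sf_0\,\langle D_y\rangle^s(\wt u^2_{\neq}f_{\neq})_0\,dy.
\end{align*}
Bound the right side by $\nu^{-1}\|\langle D_y\rangle^s(\wt u^2_{\neq}f_{\neq})_0\|_{L^2}^2$ plus $\tfrac{\nu}{2}\|\pa_yf_0\|_{H^s}^2$ (absorbed). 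Then estimate $\|\langle D_y\rangle^s(\wt u^2_{\neq}f_{\neq})_0\|_{L^2}$ by a product rule / paraproduct argument: $\|\wt u^2_{\neq}f_{\neq}\|_{H^s}\lesssim\|\wt u^2_{\neq}\|_{H^s}\|f_{\neq}\|_{H^s}$ (using $s\geq6$ and Sobolev on $\bbT\times\bbR$), then convert to the good unknowns: $f_{\neq}=\cN X_1$, and $\wt u^2_{\neq}=\pa_z(-\D_L)^{-1}f_{\neq}$ contributes the decaying factor. Using \eqref{eq:A bound}, $E_{\neq}\lesssim(\ep\nu^{1/2})^2$, and the linear-in-$t$ structure of $\cN$, one gets a pointwise-in-time bound of the form $\|(\wt u^2_{\neq}f_{\neq})_0\|_{H^s}\lesssim \ep^2\nu\,\langle t\rangle^{-1}e^{-c\nu^{1/3}t}\cdot(\text{something controlled by }\mathfrak{ED}^{1/2}\text{ or }\mathfrak{CK}_2^{1/2})$; squaring, dividing by $\nu$, and integrating in $t$ against the bootstrap dissipation integrals $\int_0^t(\mathfrak{ED}+\mathfrak{CK}_2)\lesssim(\ep\nu^{1/2})^2$ yields a total contribution $\lesssim\ep^3\nu^{1/2}$. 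Adding $\|P_0f_{in}\|_{H^s}^2$ from the initial data gives \eqref{est: f0}.

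\textbf{Step 2: the $\wt u_0^1$ estimate \eqref{est: u0}.} Identical scheme applied to \eqref{eq: u0}: test with $\langle D_y\rangle^{2s}\wt u_0^1$, integrate by parts to land on $\nu^{-1}\|\langle D_y\rangle^s(\wt u^2_{\neq}\wt u^1_{\neq})_0\|_{L^2}^2$. Now \emph{both} factors are velocities, so we pick up stronger decay: $\wt u^2_{\neq}$ via $\pa_z(-\D_L)^{-1}$ and $\wt u^1_{\neq}$ via $(\pa_y-t\pa_z)(-\D_L)^{-1}$; crucially $\wt u^1_{\neq}$ in terms of $X_1$ involves $\dot\cN$-type or $(\pa_y-t\pa_z)(-\D_L)^{-3/4}$ symbols that are bounded uniformly in $t$, while the extra factor $k^2/(k^2+(\eta-kt)^2)$ from $\wt u^2_{\neq}\wt u^1_{\neq}$ gives an $L^2_t$-integrable decay. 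This produces the better power $\nu^{7/6}$: roughly, the bound on the forcing squared over $\nu$ integrates to $\lesssim\ep^3\nu^{1/2}\cdot\nu^{2/3}=\ep^3\nu^{7/6}$, where the extra $\nu^{2/3}$ comes from one of the $\mathfrak{ED}$ factors carrying $\nu^{1/3}$-weighted dissipation. Add $\|P_0\wt u^1_{in}\|_{H^s}^2$.

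\textbf{Step 3: the $\th_{02}$ estimate \eqref{est: th02} --- the main obstacle.} This is the hard one because $\th_{02}$ solves \eqref{eq: th02} with forcing $(\wt u_{\neq}\cdot\na_L\th_{\neq})_0$, which has \emph{no} good sign and only borderline time decay; the multiplier $m$ (Section~\ref{multiplier m}) is designed precisely to supply the missing decay via the gain term $\int_0^t\|\sqrt{\pa_tm/m}\,m^{-1}\th_{02}\|_{H^s}^2$. Test \eqref{eq: th02} with $m^{-2}\langle D_y\rangle^{2s}\th_{02}$; since $\pa_tm<0$, the commutator $[\pa_t,m^{-1}]$ produces $+\int\tfrac{\pa_tm}{m}|m^{-1}\langle D_y\rangle^s\th_{02}|^2$ moved to the left as $+\int_0^t\|\sqrt{\pa_tm/m}\,m^{-1}\th_{02}\|_{H^s}^2$, and the diffusion gives $\nu\|\pa_y m^{-1}\th_{02}\|_{H^s}^2$. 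The RHS is $-\int m^{-1}\langle D_y\rangle^s\th_{02}\cdot m^{-1}\langle D_y\rangle^s(\wt u_{\neq}\cdot\na_L\th_{\neq})_0\,dy$. We split $\wt u_{\neq}\cdot\na_L\th_{\neq}=\wt u^1_{\neq}\pa_z\th_{\neq}+\wt u^2_{\neq}(\pa_y-t\pa_z)\th_{\neq}$; write $\th_{\neq}$ in terms of $X_1,X_2$ via \eqref{def f0 th0}, so $\pa_z\th_{\neq}$ and $(\pa_y-t\pa_z)\th_{\neq}$ become combinations of $\cN^{-1}X_2$ and $\dot\cN X_1$ with bounded symbols (up to powers of $\langle k,\eta\rangle$). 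The decisive point: on the resonance intervals $I_{k,\eta}$ where $\pa_tm/m\approx -k^{-2}(1+(t-\eta/k)^2)^{-3/4}$, the frequency localization forces $|k|\lesssim\sqrt{|\eta|}$ and the $m$-weighted forcing is controlled by $\|\sqrt{\pa_tm/m}\,(\cdot)\|$ applied to the non-zero modes, giving, after Cauchy--Schwarz, a bound $\lesssim\ep\cdot\big(\text{gain term for }\th_{02}\big)^{1/2}\cdot\big(\mathfrak{CK}_3\text{- or }\mathfrak{CK}_2\text{-type quantity for }X_i\big)^{1/2}$, plus off-resonance pieces handled by plain inviscid-damping decay and $\mathfrak{D}^{1/2}$. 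Absorbing the $\th_{02}$-gain factor into the LHS and integrating, the remaining factor is square-integrable with integral $\lesssim(\ep\nu^{1/2})^2$, and the extra smallness (the forcing itself being $O(\ep^2\nu)$-ish after using $E_{\neq}\lesssim\ep^2\nu$) yields the target $\ep^3\nu^{3/2}$. The bookkeeping of which CK/ED/D factors pair with which good-unknown norm, and verifying the powers of $\nu$ and $\langle t\rangle$ close to exactly $\nu^{3/2}$, is the delicate part; this is also where Lemma~\ref{M lem} (commutator of $\cM\langle\cdot\rangle^s$) and the analogous commutator for $m$ are used to handle the loss of one derivative in the paraproduct's low-high term. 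Once \eqref{est: f0}, \eqref{est: u0}, \eqref{est: th02} are established, one notes $\ep^3\nu^{1/2}\leq\tfrac12(\ep\nu^{1/4})^2\cdot\ep<(\ep\nu^{1/4})^2$ etc., but that comparison belongs to the proof of Proposition~\ref{main propo}, not here; Proposition~\ref{prop: zero} is exactly the three displayed inequalities.
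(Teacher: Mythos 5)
Your outline does follow the paper's overall scheme (separate $H^s$ energy estimates for \eqref{eq: f0}, \eqref{eq: u0}, \eqref{eq: th02}, the weight $m$ producing the gain term $\|\sqrt{\pa_tm/m}\,m^{-1}\th_{02}\|_{H^s}^2$, conversion of the nonzero modes to $X_1,X_2$, and closure through the bootstrap integrals), but two of the three stated bounds are not actually established by the argument you give. The first problem is your repeated use of pointwise-in-time inviscid damping, e.g.\ $\|\wt{u}^2_{\neq}\|_{H^s}\lesssim \langle t\rangle^{-1}$ ``via $\pa_z(-\D_L)^{-1}$'' and the ``pointwise-in-time bound $\|(\wt{u}^2_{\neq}f_{\neq})_0\|_{H^s}\lesssim \ep^2\nu\langle t\rangle^{-1}e^{-c\nu^{1/3}t}\cdot(\dots)$''. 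At fixed regularity this is false: the symbol $|k|^{\f12}(k^2+(\eta-kt)^2)^{-\f34}$ is of size one near the critical times $t\approx \eta/k$, and converting it into $\langle t\rangle$--decay costs a factor $\langle \eta\rangle^{\f32}$, i.e.\ derivatives beyond the $H^s$ control of $\cA X_1$. The paper never uses such pointwise decay for these terms; it keeps the $(-\D_L)^{-\al}$ weights on the Fourier side and pairs them with the time-integrated quantities $\mathfrak{CK}_2$, $\mathfrak{CK}_3$, $\mathfrak{ED}$, $\mathfrak{D}$ from \eqref{boot1} (Lemmas \ref{lem f0}--\ref{lem th02}); the lack of pointwise decay at resonant times is precisely why the multiplier $m$ and the resonance intervals $I_{k,\eta}$ are needed at all.

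The second problem is the $\nu$--bookkeeping, which is asserted rather than derived exactly where it matters. For \eqref{est: u0}, your scheme (move the full $\pa_y$ onto $\wt{u}_0^1$, absorb into $\nu\|\pa_y\wt{u}_0^1\|_{H^s}^2$, bound $\nu^{-1}\int\|(\wt{u}^2_{\neq}\wt{u}^1_{\neq})_0\|_{H^s}^2$) gives, with the available information ($\|(-\D_L)^{-\f12}\cA X_1\|_{L^2}^2\lesssim \mathfrak{CK}_2$, $\|\cA X_1\|_{L^2}\lesssim\ep\nu^{\f12}$, $\int\mathfrak{CK}_2\lesssim\ep^2\nu$), at best $C\ep^4\nu$, which does not imply $C\ep^3\nu^{\f76}$ (they are incomparable since $\ep$ is independent of $\nu$); your ``extra $\nu^{2/3}$ from an $\mathfrak{ED}$ factor'' is numerology. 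The paper reaches $\nu^{\f76}$ by a different balance: only $|\eta|^{\f12}$ is moved onto the zero mode (using $|\eta|^{\f12}\lesssim|\xi-kt|^{\f12}+|\eta-\xi+kt|^{\f12}$), giving the differential inequality of Lemma \ref{lem u0} with the factor $\|\pa_y\wt{u}_0^1\|_{H^s}^{\f12}$, and then H\"older in time against the $\wt{u}_0^1$ bootstrap; the worst term $\ep\nu^{\f{5}{12}}(\mathfrak{CK}_2)^{\f12}(\mathfrak{ED})^{\f14}\|\pa_y\wt{u}_0^1\|_{H^s}^{\f12}$ integrates to exactly $\ep^3\nu^{\f76}$. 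For \eqref{est: th02} you explicitly defer ``the bookkeeping'', but that bookkeeping is the proof: the splitting into $|k|>|\eta|$ versus $|k|\le|\eta|$, low--high versus high--low, and within the high--low part the regimes $t\ge 2|\eta|$, $t\in I_{j,\eta}$ with $j=k$ or $j\neq k$, and $t<t(\eta)$, each closed by a different mechanism (in the resonant case, $\sqrt{\pa_tm/m}$ on the $\th_{02}$ side paired with the $\mathfrak{CK}_3$ quantity $\|(-\D_L)^{-\f38}\cA X_2\|_{L^2}$ and $t^{\f12}e^{-c\nu^{1/3}t}\lesssim\nu^{-\f16}$) is what produces $\ep^3\nu^{\f32}$, and none of it is carried out in your sketch. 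A minor point: no commutator lemma for $m$ is needed (it depends only on $\eta$ and weights the whole zero-mode equation), and Lemma \ref{M lem} plays no role in these zero-mode estimates.
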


\subsection{Proof of Theorem \ref{thm}} In this subsection, we first admit Lemma \ref{lem: I}, Lemma \ref{lem: J} and Proposition \ref{prop: zero} and prove Proposition \ref{main propo}. 

\begin{proof}
From Proposition \ref{prop: zero}, the estimates \eqref{res: f0}, \eqref{res: u0} and \eqref{res: th02} can be obtained directly by choosing the initial data satisfying
\begin{align*}
\|P_0f_{in}\|_{H^s}\leq \ep\nu^{\f14}, \quad \|P_0\wt{u}^1_{in}\|_{H^s}\leq \ep\nu^{\f12},
\end{align*}
and taking $\ep$ small enough.

By combining Lemma \ref{lem: I}, Lemma \ref{lem: J}, \eqref{eq: NL1}, \eqref{eq: NL2} with \eqref{eq: E neq2}, we have 
\begin{align*}
\f{d}{dt}E_{\neq}+\mathfrak{D}+\mathfrak{ED}+\mathfrak{CK}_2+\mathfrak{CK}_3
\leq& C\ep\mathfrak{ED}+C\ep\mathfrak{D}+C\ep\mathfrak{CK}_2\\
&+C\ep^3\nu^{\f54}\langle t\rangle^{-2}+C\ep\nu^{\f38}(\mathfrak{CK}_2)^{\f34}\|\pa_y\langle D_y\rangle^sf_0\|_{L^2}^{\f12}\\
&+C\ep^{\f32}\nu^{\f98}\langle t\rangle^{-\f32}
\|\pa_y\langle D_y\rangle^sf_0\|_{L^2}^{\f12}.
\end{align*}
Then by taking $\ep$ small enough,  we can directly obtain \eqref{res: E}.

Thus, we deduced the proof of Proposition \ref{main propo}.
\end{proof}

Finally, we conclude the proof of Theorem \ref{thm}.   
\begin{proof}
The estimate \eqref{zero} can be directly obtained by Proposition \ref{main propo}. By using \eqref{res: E} and the definition \eqref{def f0 th0},  we easily have \eqref{w} and \eqref{th}.
 
By using the fact that 
\begin{align*}
\wt{u}_{\neq}^1=(\pa_y-t\pa_z)(-\D_L)^{-1}f_{\neq},\quad \wt{u}_{\neq}^2=\pa_z(-\D_L)^{-1}f_{\neq}, \ \text{and}\quad f_{\neq}=|\pa_z|^{-\f12}(-\D_L)^{\f14}X_1,
\end{align*}
and 
\begin{align*}
\th_{\neq}=\g^{-1}(\pa_z)^{-1}\left((1-\f{1}{4\g^2})^{\f12}\mathcal{N}^{-1}X_2-\f12\g^{-1}\dot{\mathcal{N}}X_1\right),
\end{align*}
we get that from \eqref{res: E},
\begin{align*}
\|P_{\neq}u^1\|_{L^2}&\lesssim \||\pa_z|^{-\f12}(-\D_L)^{-\f14}X_1\|_{L^2}\leq C\ep\nu^{\f12}\langle t\rangle^{-\f12}e^{-c\nu^{\f13}t},\\
\|P_{\neq}u^2\|_{L^2}&=\|\pa_z|\pa_z|^{-\f12}(-\D_L)^{-\f34}X_1\|_{L^2}\leq C\ep\nu^{\f32}\langle t\rangle^{-\f12}e^{-c\nu^{\f13}t},\\
\|P_{\neq}f\|_{L^2}&\lesssim \||\pa_z|^{-\f12}(-\D_L)^{\f14}X_1\|_{L^2}\leq C\ep\nu^{\f12}\langle t\rangle^{\f12}e^{-c\nu^{\f13}t},\\
\|P_{\neq}\vth\|_{L^2}&\lesssim \|(\pa_z)^{-1}\mathcal{N}^{-1}X_2\|_{L^2}+\|(\pa_z)^{-1}\dot{\mathcal{N}}X_1\|_{L^2} \leq C\ep\nu^{\f12}\langle t\rangle^{-\f12}e^{-c\nu^{\f13}t}.
\end{align*}
Thus, we proved \eqref{thm: u}.

Thus, we complete the proof of Theorem \ref{thm}.
\end{proof}

\subsection{Discussion of the optimality of the size}\label{optimality} 
 In this section, we will show some evidence of the sharpness of the size $\nu^{\f12}$. For clarity, we assume that the size of $X_i, \, i=1,2$ is $\nu^{\al}$ and the size of $\th_{02}$ is $\nu^{\beta}$. 
Let us rewrite the nonlinear system as follows:
\beno
\left\{\begin{aligned}
&\pa_tX_2-\nu\Delta_LX_2=-2\g^2\s^{-1}\pa_z\cN(\wt{u}_{\neq}^2\pa_y\th_{02})+\text{good terms},\\
&\pa_t\th_{02}-\nu\pa_{yy}\th_{02}=-(\wt{u}^2_{\neq}(\pa_y-t\pa_z)\th_{\neq})_0+\text{good terms}. 
\end{aligned}
\right.
\eeno
For the term  $\pa_z\cN(\wt{u}_{\neq}^2\pa_y\th_{02})$, let us focus on the low-high interaction, namely,
\begin{align*}
\mathcal{F}\big(\pa_z\cN(\wt{u}_{\neq}^2\pa_y\th_{02})\big)&=\cF\Big(\pa_z\cN\big((\wt{u}_{\neq}^2)_{\text{low}}(\pa_y\th_{02})_{\text{high}}\big)\Big)+\text{good terms}\\
&=\sum_{k\neq 0}\int_{|\eta|>2|kt|, \, |\xi-\eta|\leq \f19|\xi|} -k|k|^{-\f12}(k^2+(\eta-kt)^2)^{\f14}\widehat{\wt{u}}^2_k(t,\eta-\xi)\xi\widehat{\th}_{02}(t,\xi)d\xi\\
&\quad +\text{good terms}. 
\end{align*}
Since $\wt{u}^2$ is the lower frequency, we regard it as $\ep\nu^{\al}\langle t\rangle^{-\f32}$. Thus formally we have
\begin{align*}
&\left|\sum_{k\neq 0}\int_{|\eta|>2|kt|, \, |\xi-\eta|\leq \f19|\xi|} -k|k|^{-\f12}(k^2+(\eta-kt)^2)^{\f14}\widehat{\wt{u}}^2(t,k,\eta-\xi)\xi\widehat{\th}_{02}(t,\xi)d\xi\right|\\
&\lesssim \sum_{k\neq 0}\int_{|\eta|>2|kt|, \, |\xi-\eta|\leq \f19|\xi|} \nu^{\al}\langle t\rangle^{-\f32}|\xi|^{\f32}|\widehat{\th}_{02}(t,\xi)|d\xi
\end{align*}
and we write the toy model for $X_2$ as follows:
\ben\label{eq:toyX_2}
\pa_tX_2-\nu\pa_{yy}X_2=C\ep \nu^{\al}\langle t\rangle^{-\f32}|\pa_y|^{\f32}\th_{02}, 
\een
where we also formally regard $\Delta_L$ as $\pa_{yy}$. 

For the term $(\wt{u}^2_{\neq}(\pa_y-t\pa_z)\th_{\neq})_0$, we also focus on the low-high interaction, namely, 
\begin{align*}
\mathcal{F}(\wt{u}^2_{\neq}(\pa_y-t\pa_z)\th_{\neq})_0
&=\cF\Big(\big((\wt{u}_{\neq}^2)_{\text{low}}((\pa_y-t\pa_z)\th_{\neq})_{\text{high}}\big)_0\Big)+\text{good terms}\\
&=\sum_{k\neq 0}\int_{|\eta|>2|kt|, \, |\xi-\eta|\leq \f19|\xi|} \widehat{\wt{u}}^2(t,-k,\eta-\xi)i(\xi-kt)\widehat{\th}(t,k,\xi)d\xi\\
&\quad+\text{good terms}.
\end{align*}
By \eqref{def f0 th0}, we formally regrad $X_1\sim X_2$, then 
\begin{align*}
|\mathcal{F}(\wt{u}^2_{\neq}(\pa_y-t\pa_z)\th_{\neq})_0|
\lesssim \sum_{k\neq 0}\int_{|\eta|>2|kt|, \, |\xi-\eta|\leq \f19|\xi|} \nu^{\al}\langle t\rangle^{-\f32}|\xi|^{\f12}|\widehat{X}_2(t,k,\xi)|d\xi,
\end{align*}
and we have the toy model for $\th_{0,2}$:
\ben\label{eq:toyth02}
\pa_t\th_{02}-\nu\pa_{yy}\th_{02}=C\ep\nu^{\al}\langle t\rangle^{-\f32}|\pa_y|^{\f12}X_2. 
\een

To close the energy estimate of the toy model \eqref{eq:toyX_2} and \eqref{eq:toyth02} in Sobolev space, we have to use the diffusion term and pay the smallness to control the derivative loss. Due to the diffusion effect, formally we regard $\pa_y\sim \nu^{-\f12}$. Then we have the following model:
\beno
\left\{\begin{aligned}
\pa_tX_2-\nu\pa_{yy}X_2=\ep \nu^{\al-\f34}\langle t\rangle^{-\f32}\th_{02},\\
\pa_t\th_{02}-\nu\pa_{yy}\th_{02}=\ep\nu^{\al-\f14}\langle t\rangle^{-\f32}X_2.
\end{aligned}\right.
\eeno
Thus, we have
\begin{align*}
\|X_2(t)\|_{H^s}^2+\int_0^t\|\pa_yX_2(\tau)\|_{H^s}^2d\tau
&\leq \|X_2(0)\|_{H^s}^2+\int_0^{t}\ep\nu^{\al-\f34}\langle \tau\rangle^{-\f32}d\tau\sup_{\tau\in [0,t]}\|X_2(\tau)\|_{H^s}\|\th_{02}(\tau)\|_{H^s}\\
&\leq \|X_2(0)\|_{H^s}^2+\ep\nu^{\al-\f34}\sup_{\tau\in [0,t]}\|X_2(\tau)\|_{H^s}\|\th_{02}(\tau)\|_{H^s},
\end{align*}
and
\begin{align*}
\|\th_{02}(t)\|_{H^s}^2+\int_0^t\|\pa_y\th_{02}(\tau)\|_{H^s}^2d\tau
&\leq \int_0^{t}\ep\nu^{\al-\f14}\langle \tau\rangle^{-\f32}d\tau\sup_{\tau\in [0,t]}\|X_2(\tau)\|_{H^s}\|\th_{02}(\tau)\|_{H^s}\\
&\leq \ep\nu^{\al-\f14}\sup_{\tau\in [0,t]}\|X_2(\tau)\|_{H^s}\|\th_{02}(\tau)\|_{H^s}.
\end{align*}
Thus it requires 
\beno
\ep\nu^{\al-\f34}\nu^{\al}\nu^{\b}\leq \nu^{2\al},\quad \ep\nu^{\al-\f14}\nu^{\al}\nu^{\b}\leq \nu^{2\b},
\eeno
which is $\al\geq \f12$ and $\b\geq \f34$. See the estimate of $J_{22}$ in Section 3.4 and the estimate of $K_{121}$ in Section 5.3 for the rigorous proof. 

\section{the interactions between the zero mode and the non-zero mode}\label{sec zero-nonzero mode}
In this section, we study the nonlinear interaction between the zero mode and the non-zero mode. We will prove \eqref{eq:I_1} and \eqref{eq:I_2} in Lemma \ref{lem: I} and \eqref{eq:J_1}, \eqref{eq:J_2} and \eqref{eq:J_3} in Lemma \ref{lem: J}. 
\subsection{Treatment of $I_1$} For $I_1$, by Young's inequality and the bootstrap hypotheses, we have
\begin{align*}
I_1
&=-\int\big[\cA\cN^{-1}(\wt{u}_0^1\pa_zf_{\neq})-\cA(\wt{u}_0^1\pa_zX_1)\big]\cA X_1dzdy\\
&\quad-\int\big[\cA(\wt{u}_0^1\pa_zX_1)-\wt{u}_0^1\pa_z\cA X_1\big]\cA X_1dzdy\\
&\eqdef I_1^{\text{com1}}+I_1^{\text{com2}},
\end{align*}
and here, we used that by integration parts,
\begin{align*}
\int(\wt{u}_0^1\pa_z\cA X_1)\cA X_1dzdy=0.
\end{align*}

For $I_1^{\text{com1}}$, by using $f_{\neq}=\cN X_1$ and  the fact that 
\begin{align*}
&\f{1}{(k^2+(\eta-kt)^2)^{\f14}}-\f{1}{(k^2+(\xi-kt)^2)^{\f14}}\\
&\lesssim(k^2+(\eta-kt)^2)^{-\f14}(k^2+(\xi-kt)^2)^{-\f14} \f{|\eta-\xi|}{(k^2+(\xi-kt)^2)^{\f14}+(k^2+(\eta-kt)^2)^{\f14}},
\end{align*}
we obtain 
\begin{align*}
| I_1^{\text{com1}}|&\lesssim\sum_{k\neq0}\int_{\xi,\eta}\cA_k^s(t,\eta)\Big|\f{1}{(k^2+(\eta-kt)^2)^{\f14}}-\f{1}{(k^2+(\xi-kt)^2)^{\f14}}\Big||\wh{\wt{u}}_0^1(t,\eta-\xi)|\\
 &\quad\times|k|(k^2+(\xi-kt)^2)^{\f14}|\wh{X}_1(t,k,\xi)|\cA_k^s(t,\eta)|X_1(t,k,\eta)|d\xi d\eta\\
 &\lesssim\sum_{k\neq0}\int_{\xi,\eta}\cA_k^s(t,\eta)(k^2+(\eta-kt)^2)^{-\f14}\f{|\eta-\xi||\wh{\wt{u}}_0^1(t,\eta-\xi)|}{(k^2+(\xi-kt)^2)^{\f14}+(k^2+(\eta-kt)^2)^{\f14}}\\
 &\quad\times|k||\wh{X}_1(t,k,\xi)|\cA_k^s(t,\eta)|X_1(t,k,\eta)|d\xi d\eta.
 \end{align*}
By \eqref{eq:A bound}, Young's inequality and the bootstrap hypotheses, we have 
\begin{align*}
  | I_1^{\text{com1}}|&\lesssim\sum_{k\neq0}\int_{\xi,\eta}\cA_k^s(t,\xi)|\eta-\xi||\wh{\wt{u}}_0^1(t,\eta-\xi)||\wh{X}_1(t,k,\xi)||k|(k^2+(\eta-kt)^2)^{-\f12}\\
 &\quad\times\cA_k^s(t,\eta)|X_1(t,k,\eta)|d\xi d\eta\\
 &\lesssim\|f_0\|_{H^s}\|\cA X_1\|_{L^2}\|\pa_z(-\D_L)^{-\f12}\cA X_1\|_{L^2}
 \leq C\ep\nu^{\f{1}{12}}(\mathfrak{ED})^{\f12}(\mathfrak{CK}_2)^{\f12}.
 \end{align*}

For $I_1^{\text{com2}}$, 
we get that by Lemma \ref{M lem},
\begin{align*}
|I_1^{\text{com2}}|&\lesssim\sum_{k\neq0}\int_{\xi,\eta}\big|\cA_k^s(t,\eta)-\cA_k^s(t,\xi)\big||\wh{\wt{u}}_0^1(t,\eta-\xi)||k||\wh{X}_1(t,k,\xi)|\cA_k^s(t,\eta)|\wh{X}_1(t,k,\eta)|d\xi d\eta\\
&\lesssim\sum_{k\neq0}\int_{\xi,\eta}\nu^{\f13}|\eta-\xi||k|^{\f23}|\wh{\wt{u}}_0^1(t,\eta-\xi)|\cA_k^s(t,\xi)|\wh{X}_1(t,k,\xi)|\cA_k^s(t,\eta)|\wh{X}_1(t,k,\eta)|d\xi d\eta\\
&\quad+\sum_{k\neq0}\int_{\xi,\eta}|\eta-\xi||\wh{\wt{u}}_0^1(t,\eta-\xi)|\cA_k^s(t,\xi)|\wh{X}_1(t,k,\xi)|\cA_k^s(t,\eta)|\wh{X}_1(t,k,\eta)|d\xi d\eta\\
&\quad +\sum_{k\neq0,|k|\leq|\eta-\xi|}\int_{\xi,\eta}\langle\eta-\xi\rangle^s|\wh{\wt{u}}_0^1(t,\eta-\xi)|e^{c\nu^{\f13}t}|k||\wh{X}_1(t,k,\xi)|\cA_k^s(t,\eta)|\wh{X}_1(t,k,\eta)|d\xi d\eta\\
&\quad+\sum_{k\neq0,|k|>|\eta-\xi|}\int_{\xi,\eta}|\eta-\xi||\wh{\wt{u}}_0^1(t,\eta-\xi)|\langle k,\xi\rangle^s|\wh{X}_1(t,k,\xi)|\cA_k^s(t,\eta)|\wh{X}_1(t,k,\eta)|d\xi d\eta\\
&\lesssim\nu^{\f13}\|\wt{u}_0^1\|_{H^3}\||D_z|^{\f13}\cA X_1\|_{L^2}^2+\|\wt{u}_0^1\|_{H^3}\|\cA X_1\|_{L^2}^2+\|\wt{u}_0^1\|_{H^s}\|e^{c\nu^{\f13}t}X_1\|_{H^3}\|\cA X_1\|_{L^2}\\
&\leq C\ep\nu^{\f12}\mathfrak{ED}+C\ep\nu^{\f16}\mathfrak{ED}\leq C\ep\nu^{\f16}\mathfrak{ED}.
\end{align*}

\subsection{Treatment of $I_2$}
By using the definition \eqref{def f0 th0}, we have
\begin{align}\label{u2}
|\wh{\wt{u}}_{\neq}^2(t,k,\eta)|\lesssim |k|^{\f12}(k^2+(\eta-kt)^2)^{-\f34}|\wh{X}_1(t,k,\eta)|,
\end{align}
and then, we get that for $I_2$, 
\begin{align*}
|I_2|
&\lesssim\sum_{k\neq0}\int_{\xi,\eta}\cA_k^s(t,\eta)|k|(k^2+(\eta-kt)^2)^{-\f14}(k^2+(\eta-\xi-kt)^2)^{-\f34}|\wh{X}_1(t,k,\eta-\xi)|\\
&\quad\times|(i\xi)\wh{f}_0(t,\xi)|\cA_k^s(t,\eta)|X_1(t,k,\eta)|d\xi d\eta\\
&\lesssim\sum_{k\neq0}\int_{\xi,\eta, |\xi|\leq|\eta-\xi|}\cA_k^s(t,\eta)|k|(k^2+(\eta-kt)^2)^{-\f14}(k^2+(\eta-\xi-kt)^2)^{-\f34}|\wh{X}_1(t,k,\eta-\xi)|\\
&\quad\times|(i\xi)\wh{f}_0(t,\xi)|\cA_k^s(t,\eta)|X_1(t,k,\eta)|d\xi d\eta\\
&\quad+\sum_{k\neq0}\int_{\xi,\eta, |\xi|>|\eta-\xi|}\cA_k^s(t,\eta)|k|(k^2+(\eta-kt)^2)^{-\f14}(k^2+(\eta-\xi-kt)^2)^{-\f34}|\wh{X}_1(t,k,\eta-\xi)|\\
&\quad\times|(i\xi)\wh{f}_0(t,\xi)|\cA_k^s(t,\eta)|X_1(t,k,\eta)|d\xi d\eta\\
&\eqdef I_{21}+I_{22}.
\end{align*}

For $I_{21}$, by using the fact that $\langle k,\eta\rangle^s\lesssim \langle k,\eta-\xi\rangle^s$ and for $k\neq0$,
\begin{align*}
\f{k^2+(\eta-kt)^2}{k^2+(\eta-\xi-kt)^2}\lesssim 1+\xi^2,
\end{align*}
we obtain that by Young's inequality and the bootstrap hypotheses, 
\begin{align}\label{I21}
|I_{21}|&\lesssim\sum_{k\neq0}\int_{|\xi|\leq|\eta-\xi|}\Big(\f{k^2+(\eta-kt)^2}{k^2+(\eta-\xi-kt)^2}\Big)^{\f14}\cA_k^s(t,\eta-\xi)|k|(k^2+(\eta-\xi-kt)^2)^{-\f12}\nonumber\\
&\quad\times|\wh{X}_1(t,k,\eta-\xi)||\xi\wh{f}_0(t,\xi)|(k^2+(\eta-kt)^2)^{-\f12}\cA_k^s(t,\eta)|X_1(t,k,\eta)|d\xi d\eta\nonumber\\
&\lesssim\sum_{k\neq0}\int_{|\xi|\leq|\eta-\xi|}\cA_k^s(t,\eta-\xi)|k|^{\f12}(k^2+(\eta-\xi-kt)^2)^{-\f12}|\wh{X}_1(t,k,\eta-\xi)|\nonumber\\
&\quad\times\langle\xi\rangle^3|\wh{f}_0(t,\xi)||k|^{\f12}(k^2+(\eta-kt)^2)^{-\f12}\cA_k^s(t,\eta)|X_1(t,k,\eta)|d\xi d\eta\nonumber\\
&\lesssim\|\pa_z(-\D_L)^{-\f12}\cA X_1\|_{L^2}^2\|f_0\|_{H^5}
\leq C\ep\nu^{\f14}\mathfrak{CK}_2.
\end{align}

For $I_{22}$, we have
\begin{align*}
|I_{22}|&\lesssim\sum_{k\neq0,|k|>|\xi|}\int_{|\eta-\xi|<|\xi|}\cA_k^s(t,\eta)|k|(k^2+(\eta-\xi-kt)^2)^{-\f34}|\wh{X}_1(t,k,\eta-\xi)|\\
&\quad\times|(i\xi)\wh{f}_0(t,\xi)|(k^2+(\eta-kt)^2)^{-\f14}\cA_k^s(t,\eta)|\wh{X}_1(t,k,\eta)|d\xi d\eta\\
&\quad+\sum_{k\neq0,|k|\leq|\xi|}\int_{|\eta-\xi|<|\xi|}\cA_k^s(t,\eta)|k|(k^2+(\eta-\xi-kt)^2)^{-\f34}|\wh{X}_1(t,k,\eta-\xi)|\\
&\quad\times|(i\xi)\wh{f}_0(t,\xi)|(k^2+(\eta-kt)^2)^{-\f14}\cA_k^s(t,\eta)|\wh{X}_1(t,k,\eta)|d\xi d\eta\\
&\eqdef I_{221}+I_{222}.
\end{align*}
For $I_{221}$, by $|k|>|\eta-\xi|$, we have $\langle k,\eta\rangle^s\lesssim\langle k,\eta-\xi\rangle^s+\langle k,\xi\rangle^s\lesssim\langle k\rangle^s$, and 
\begin{align*}
\Big(1+\big(t-\f{\eta-\xi}{k}\big)^2\Big)^{-\f34}\lesssim\langle t\rangle^{-\f32}.
\end{align*}
Then, we get that the bootstrap hypotheses, 
\begin{align}\label{I221}
|I_{221}|&\lesssim\sum_{k\neq0,|k|>|\xi|}\int_{|\eta-\xi|<|\xi|}e^{c\nu^{\f13}t}\langle k\rangle^s|k|(k^2+(\eta-\xi-kt)^2)^{-\f34}|\wh{X}_1(t,k,\eta-\xi)|\nonumber\\
&\quad\times|(i\xi)\wh{f}_0(t,\xi)|(k^2+(\eta-kt)^2)^{-\f14}\cA_k^s(t,\eta)|\wh{X}_1(t,k,\eta)|d\xi d\eta\nonumber\\
&\lesssim\sum_{k\neq0,|k|>|\xi|}\int_{|\eta-\xi|<|\xi|}e^{c\nu^{\f13}t}\langle k\rangle^s|k|^{-1}\Big(1+(t-\f{\eta-\xi}{k})^2\Big)^{-\f34}|\wh{X}_1(t,k,\eta-\xi)|\nonumber\\
&\quad\times|(i\xi)\wh{f}_0(t,\xi)|\cA_k^s(t,\eta)|\wh{X}_1(t,k,\eta)|d\xi d\eta\nonumber\\
&\lesssim\langle t\rangle^{-\f32}\|f_0\|_{H^3}\|\cA X_1\|_{L^2}^2\leq C\ep^3\nu^{\f54}\langle t\rangle^{-\f32}.
\end{align}
For $I_{222}$, by using $|\xi|^{\f12}\lesssim|\eta-\xi-kt|^{\f12}+|\eta-kt|^{\f12}$, $\langle k,\eta\rangle^s\lesssim\langle k,\xi\rangle^s\lesssim\langle \xi\rangle^s$, and \begin{align*}
\|(-\D_L)^{-\f34}X_1\|_{H^3}\lesssim\langle t\rangle^{-\f32}\|X_1\|_{H^{\f92}},
\end{align*}
we obtain
\begin{align*}
|I_{222}|&\lesssim\sum_{k\neq0,|k|\leq|\xi|}\int_{|\eta-\xi|<|\xi|}\cA_k^s(t,\eta)|k|(k^2+(\eta-\xi-kt)^2)^{-\f34}|\wh{X}_1(t,k,\eta-\xi)|\\
&\quad\times||\xi||\wh{f}_0(t,\xi)|(k^2+(\eta-kt)^2)^{-\f14}\cA_k^s(t,\eta)|\wh{X}_1(t,k,\eta)|d\xi d\eta\\
&\leq\sum_{k\neq0,|k|\leq|\xi|}\int_{|\eta-\xi|\leq|\xi|}|k|(k^2+(\eta-\xi-kt)^2)^{-\f12}e^{c\nu^{\f13}t}|\wh{X}_1(k,\eta-\xi)|\nonumber\\
&\quad\times\langle \xi\rangle^s|\xi|^{\f12}|\wh{f}_0(t,\xi)|(k^2+(\eta-kt)^2)^{-\f14}\cA_k^s(t,\eta)|\wh{X}_1(k,\eta)|d\xi d\eta\\
&\quad+\sum_{k\neq0,|k|\leq|\xi|}\int_{|\eta-\xi|\leq|\xi|}|k|(k^2+(\eta-\xi-kt)^2)^{-\f34}e^{c\nu^{\f13}t}|\wh{X}_1(t,k,\eta-\xi)|\\
&\quad\times\langle\xi\rangle^s|\xi|^{\f12}|\wh{f}_0(t,\xi)|\cA_k^s(t,\eta)|\wh{X}_1(t,k,\eta)|d\xi d\eta.
\end{align*}
Then, by Young's inequality, the interpolation inequality and the bootstrap hypotheses, we get
\begin{align}\label{I222}
|I_{222}|&\lesssim\|\pa_z(-\D_L)^{-\f12}e^{c\nu^{\f13}t}X_1\|_{H^2}\|\pa_y^{\f12}\langle D_y\rangle^sf_0\|_{L^2}\|(-\D_L)^{-\f14}\cA X_1\|_{L^2}\nonumber\\
&\quad+\|(-\D_L)^{-\f34}e^{c\nu^{\f13}t}X_1\|_{H^3}\|\pa_y^{\f12}\langle D_y\rangle^sf_0\|_{L^2}\|\cA X_1\|_{L^2}\nonumber\\
&\lesssim\|\pa_z(-\D_L)^{-\f12}\cA X_1\|_{L^2}\|\pa_y\langle D_y\rangle^sf_0\|_{L^2}^{\f12}\|\langle D_y\rangle^sf_0\|_{L^2}^{\f12}\|(-\D_L)^{-\f12}\cA X_1\|_{L^2}^{\f12}\|\cA X_1\|_{L^2}^{\f12}\nonumber\\
&\quad+\langle t\rangle^{-\f32}
\|\pa_y\langle D_y\rangle^sf_0\|_{L^2}^{\f12}\|\langle D_y\rangle^sf_0\|_{L^2}^{\f12}\|\cA X_1\|_{L^2}^2\nonumber\\
&\leq C\ep\nu^{\f38}(\mathfrak{CK}_2)^{\f34}\|\pa_y\langle D_y\rangle^sf_0\|_{L^2}^{\f12}+C\ep^{\f32}\nu^{\f98}\langle t\rangle^{-\f32}
\|\pa_y\langle D_y\rangle^sf_0\|_{L^2}^{\f12}.
\end{align}

Thus, combining \eqref{I21}, \eqref{I221} with \eqref{I222}, we obtain that 
\begin{align*}
|I_2|&\leq C\ep\nu^{\f14}\mathfrak{CK}_2+C\ep^3\nu^{\f54}\langle t\rangle^{-2}+C\ep\nu^{\f38}(\mathfrak{CK}_2)^{\f34}\|\pa_y\langle D_y\rangle^sf_0\|_{L^2}^{\f12}+C\ep^{\f32}\nu^{\f98}\langle t\rangle^{-\f32}
\|\pa_y\langle D_y\rangle^sf_0\|_{L^2}^{\f12}.
\end{align*}
\subsection{Treatment of $J_1$}
For $J_1$, we can rewrite it as:
\begin{align*}
J_1&=-2\g^2\s^{-1}\int\Big[\cA\cN(\wt{u}_0^1\pa_z^2\th_{\neq})-\cA(\wt{u}_0^1\cN\pa_z^2\th_{\neq})\Big]\cA X_2dzdy\\
&\quad-\s^{-1}\int\Big[\cA\dot{\cN}(\wt{u}_0^1\pa_zf_{\neq})-\cA(\wt{u}_0^1\dot{\cN}\pa_zf_{\neq})\Big]\cA X_2dzdy\\
&\quad+\int\Big[\cA(\wt{u}_0^1\pa_zX_2-\wt{u}_0^1\pa_z\cA X_2\Big]\cA X_2dzdy
+\int\wt{u}_0^1\pa_z\cA X_2\cA X_2dzdy\\
&=-2\g^2\s^{-1}\int\Big[\cA\cN(\wt{u}_0^1\pa_z^2\th_{\neq})-\cA(\wt{u}_0^1\cN\pa_z^2\th_{\neq})\Big]\cA X_2dzdy\\
&\quad-\s^{-1}\int\Big[\cA\dot{\cN}(\wt{u}_0^1\pa_zf_{\neq})-\cA(\wt{u}_0^1\dot{\cN}\pa_zf_{\neq})\Big]\cA X_2dzdy\\
&\quad+\int\Big[\cA(\wt{u}_0^1\pa_zX_2-\wt{u}_0^1\pa_z\cA X_2\Big]\cA X_2dzdy\\
&\eqdef J_1^{\text{com1}}+J_1^{\text{com2}}+J_1^{\text{com3}},
\end{align*}
whereas in the last second equality,   we get that by integration by parts,
\begin{align*}
\int\wt{u}_0^1\pa_z\cA X_2\cA X_2dzdy=0.
\end{align*}

By using the definition \eqref{def f0 th0}, we have
\begin{align}\label{th neq}
|\wh{\pa_z\th_{\neq}}(t,k,\eta)|&\lesssim |k|^{\f12}(k^2+(\eta-kt)^2)^{-\f14}|\wh{X}_2(t,k,\eta)|\nonumber\\
&\quad+|\eta-kt||k|^{\f12}(k^2+(\eta-kt)^2)^{-\f34}|\wh{X}_2(t,k,\eta)|\nonumber\\
&\lesssim |k|^{\f12}(k^2+(\eta-kt)^2)^{-\f14}\Big(|\wh{X}_2(t,k,\eta)|+|\wh{X}_2(t,k,\eta)|\Big).
\end{align}
Then, for $J_1^{\text{com1}}$, by using the fact that 
\begin{align*}
\big|(k^2+(\eta-kt)^2)^{\f14}-(k^2+(\xi-kt)^2)^{\f14}\big|\lesssim\f{|\eta-\xi|}{(k^2+(\eta-kt)^2)^{\f14}+(k^2+(\xi-kt)^2)^{\f14}},
\end{align*}
and $\langle k,\eta\rangle^s\lesssim\langle k,\eta-\xi\rangle^s+\langle k,\xi\rangle^s$, we get that
\begin{align*}
|J_1^{\text{com1}}|&\lesssim\sum_{k\neq0}\int_{\xi,\eta}\cA_k^s(t,\eta)|k|^{-\f12}\big|(k^2+(\eta-kt)^2)^{\f14}-(k^2+(\xi-kt)^2)^{\f14}\big||\wh{\wt{u}}_0^1(t,\eta-\xi)|\nonumber\\
&\quad\times|k|^{\f32}(k^2+(\xi-kt)^2)^{-\f14}\big(|\wh{X}_1|+|\wh{X}_2|\big)(t,k,\xi)\cA_k^s(t,\eta)|\wh{X}_2(t,k,\eta)|d\xi d\eta\nonumber\\
&\lesssim\sum_{k\neq0}\int_{\xi,\eta}\cA_k^s(t,\eta)|\eta-\xi||\wh{\wt{u}}_0^1(t,\eta-\xi)||k|(k^2+(\xi-kt)^2)^{-\f12}\big(|\wh{X}_1|+|\wh{X}_2|\big)(t,k,\xi)\nonumber\\
&\quad\times\cA_k^s(t,\eta)|\wh{X}_2(t,k,\eta)|d\xi d\eta\nonumber\\
&\lesssim\sum_{k\neq0}\int_{\xi,\eta}\cA_k^s(t,\eta-\xi)|\eta-\xi||\wh{\wt{u}}_0^1(t,\eta-\xi)||k|(k^2+(\xi-kt)^2)^{-\f12}\big(|\wh{X}_1|+|\wh{X}_2|\big)(t,k,\xi)\nonumber\\
&\quad\times\cA_k^s(t,\eta)|\wh{X}_2(t,k,\eta)|d\xi d\eta\nonumber\\
&+\sum_{k\neq0}\int_{\xi,\eta}|\eta-\xi||\wh{\wt{u}}_0^1(t,\eta-\xi)|\cA_k^s(t,\xi)|k|(k^2+(\xi-kt)^2)^{-\f12}\big(|\wh{X}_1|+|\wh{X}_2|\big)(t,k,\xi)\nonumber\\
&\quad\times\cA_k^s(t,\eta)|\wh{X}_2(t,k,\eta)|d\xi d\eta\nonumber\\
&\eqdef J_{11}^{\text{com1}}+J_{12}^{\text{com1}}.
\end{align*}
For $J_{11}^{\text{com1}}$, we get that by Young's inequality and the bootstrap hypotheses, 
\begin{align*}
|J_{11}^{\text{com1}}|&\lesssim\sum_{k\neq0,|k|\leq|\eta-\xi|}\int_{\xi,\eta}\langle \eta-\xi\rangle^s|\eta-\xi||\wh{\wt{u}}_0^1(t,\eta-\xi)|e^{c\nu^{\f13}t}|k|(k^2+(\xi-kt)^2)^{-\f12}\nonumber\\
&\quad\times\big(|\wh{X}_1|+|\wh{X}_2|\big)(t,k,\xi)\cA_k^s(t,\eta)|\wh{X}_2(t,k,\eta)|d\xi d\eta\nonumber\\
&\quad+\sum_{k\neq0,|k|>|\eta-\xi|}\int_{\xi,\eta}||\eta-\xi||\wh{\wt{u}}_0^1(t,\eta-\xi)|\cA_k^s(t,\xi)|k|(k^2+(\xi-kt)^2)^{-\f12}\nonumber\\
&\quad\times\big(|\wh{X}_1|+|\wh{X}_2|\big)(t,k,\xi)\cA_k^s(t,\eta)|\wh{X}_2(t,k,\eta)|d\xi d\eta\nonumber\\
&\lesssim\|f_0\|_{H^s}\|\cA X_2\|_{L^2}\Big(\|e^{c\nu^{\f13}t}\pa_z(-\D_L)^{-\f12}X_1\|_{H^2}+\|e^{c\nu^{\f13}t}\pa_z(-\D_L)^{-\f12}X_2\|_{H^2}\Big)\nonumber\\
&\quad+\|\wt{u}_0^1\|_{H^3}\|\cA X_2\|_{L^2}\Big(\|\pa_z(-\D_L)^{-\f12}\cA X_1\|_{L^2}+\|\pa_z(-\D_L)^{-\f12}\cA X_2\|_{L^2}\Big)\nonumber\\
&\lesssim\Big(\|f_0\|_{H^s}+\|\wt{u}_0^1\|_{H^s}\Big)\|\cA X_2\|_{L^2}\Big(\|\pa_z(-\D_L)^{-\f12}\cA X_1\|_{L^2}+\|\pa_z(-\D_L)^{-\f12}\cA X_2\|_{L^2}\Big)\nonumber\\
&\leq C\ep\nu^{\f{1}{12}}(\mathfrak{ED})^{\f12}(\mathfrak{CK}_2)^{\f12}+C\ep\nu^{\f13}(\mathfrak{ED})^{\f12}(\mathfrak{CK}_2)^{\f12}.
\end{align*}
And for $J_{12}^{\text{com1}}$, by Young's inequality and the bootstrap hypotheses, we have
\begin{align*}
|J_{12}^{\text{com1}}|&\lesssim\sum_{k\neq0}\int_{\xi,\eta}||\eta-\xi||\wh{\wt{u}}_0^1(t,\eta-\xi)|\cA_k^s(t,\xi)|k|(k^2+(\xi-kt)^2)^{-\f12}\nonumber\\
&\quad\times\big(|\wh{X}_1|+|\wh{X}_2|\big)(t,k,\xi)\cA_k^s(t,\eta)|\wh{X}_2(t,k,\eta)|d\xi d\eta\nonumber\\
&\lesssim\|\wt{u}_0^1\|_{H^3}\|\cA X_2\|_{L^2}\Big(\|\pa_z(-\D_L)^{-\f12}\cA X_1\|_{L^2}+\|\pa_z(-\D_L)^{-\f12}\cA X_2\|_{L^2}\Big)\nonumber\\
&\lesssim\|\wt{u}_0^1\|_{H^s}\|\cA X_2\|_{L^2}\Big(\|\pa_z(-\D_L)^{-\f12}\cA X_1\|_{L^2}+\|\pa_z(-\D_L)^{-\f12}\cA X_2\|_{L^2}\Big)\nonumber\\
&\leq C\ep\nu^{\f13}(\mathfrak{ED})^{\f12}(\mathfrak{CK}_2)^{\f12}.
\end{align*}
Thus, we obtain that 
\begin{align}\label{J11}
|J_1^{\text{com2}}|\leq C\ep\nu^{\f{1}{12}}(\mathfrak{ED})^{\f12}(\mathfrak{CK}_2)^{\f12}+C\ep\nu^{\f13}(\mathfrak{ED})^{\f12}(\mathfrak{CK}_2)^{\f12}\leq C\ep\nu^{\f{1}{12}}(\mathfrak{ED})^{\f12}(\mathfrak{CK}_2)^{\f12}.
\end{align}

For $J_1^{\text{com2}}$, by the definition \eqref{def f0 th0} and using the fact that
\begin{align*}
&(\eta-kt)(k^2+(\eta-kt)^2)^{-\f34}-(\xi-kt)(k^2+(\xi-kt)^2)^{-\f34}\\
&\lesssim|\eta-\xi|(k^2+(\xi-kt)^2)^{-\f34}+|\eta-\xi|(k^2+(\eta-kt)^2)^{-\f12}(k^2+(\xi-kt)^2)^{-\f14},
\end{align*}
 we get that by Young's inequality and the bootstrap hypotheses,
\begin{align}\label{J12}
|J_1^{\text{com2}}|&\lesssim\sum_{k\neq0}\int_{\xi,\eta}\cA_k^s(t,\eta)|k|^{\f12}\Big|(\eta-kt)(k^2+(\eta-kt)^2)^{-\f34}-(\xi-kt)(k^2+(\xi-kt)^2)^{-\f34}\Big|\nonumber\\
&\quad\times|\wh{\wt{u}}_0^1(t,\eta-\xi)||k|^{\f12}(k^2+(\xi-kt)^2)^{\f14}|\wh{X}_1(t,k,\xi)|\cA_k^s(t,\eta) |\wh{X}_2(t,k,\eta)|d\xi d\eta\nonumber\\
&\lesssim\sum_{k\neq0}\int_{\xi,\eta}\cA_k^s(t,\eta)|\eta-\xi||\wh{\wt{u}}_0^1(t,\eta-\xi)||k|(k^2+(\xi-kt)^2)^{-\f12}|\wh{X}_1(t,k,\xi)|\nonumber\\
&\quad\times\cA_k^s(t,\eta) |\wh{X}_2(t,k,\eta)|d\xi d\eta\nonumber\\
&\quad+\sum_{k\neq0}\int_{\xi,\eta}\cA_k^s(t,\eta)|\eta-\xi||\wh{\wt{u}}_0^1(t,\eta-\xi)||k||\wh{X}_1(t,k,\xi)|(k^2+(\eta-kt)^2)^{-\f12}\nonumber\\
&\quad\times\cA_k^s(t,\eta) |\wh{X}_2(t,k,\eta)|d\xi d\eta\nonumber\\
&\lesssim\|f_0\|_{H^s}\|\cA X_1\|_{L^2}\|\pa_z(-\D_L)^{-\f12}\cA X_2\|_{L^2}+\|f_0\|_{H^s}\|\cA X_2\|_{L^2}\|\pa_z(-\D_L)^{-\f12}\cA X_1\|_{L^2}\nonumber\\
&\leq  C\ep\nu^{\f{1}{12}}(\mathfrak{ED})^{\f12}(\mathfrak{CK}_2)^{\f12}.
\end{align}

For $J_1^{\text{com3}}$, by changing $X_1$ to be $X_2$ in $I_2^{\text{com2}}$, we get 
\begin{align}\label{J13}
|J_1^{\text{com3}}|\leq C\ep\nu^{\f16}\mathfrak{ED}.
\end{align}

From \eqref{J11}, \eqref{J12} and \eqref{J13}, we get 
\begin{align*}
|J_1|&\leq C\ep\nu^{\f{1}{12}}(\mathfrak{ED})^{\f12}(\mathfrak{CK}_2)^{\f12}+C\ep\nu^{\f16}(\mathfrak{ED})^{\f12}(\mathfrak{CK}_2)^{\f12}+C\ep\nu^{\f16}\mathfrak{ED}.
\end{align*}

\subsection{Treatment of $J_2$}
For $J_2$, we have
\begin{align*}
J_2
&=-2\g^2\s^{-1}\int\cA\pa_z\cN(\wt{u}_{\neq}^2\pa_y\th_{01})\cA X_2dzdy-2\g^2\s^{-1}\int\cA\pa_z\cN(\wt{u}_{\neq}^2\pa_y\th_{02})\cA X_2dzdy\\
&\eqdef J_{21}+J_{22}.
\end{align*}

For $J_{21}$, by \eqref {u2} and using the fact that 
\begin{align*}
(k^2+(\eta-kt)^2)^{\f14}\lesssim(k^2+(\eta-\xi-kt)^2)^{\f14}+|\xi|^{\f12},
\end{align*}
we get that by Young's inequality and the bootstrap hypotheses,
\begin{align*}
|J_{21}|&\lesssim\sum_{k\neq0}\int_{\xi,\eta}\cA_k^s(t,\eta)|k|(k^2+(\eta-kt)^2)^{\f14}(k^2+(\eta-\xi-kt)^2)^{-\f34}|\wh{X}_1(t,k,\eta-\xi)|\nonumber\\
&\quad\times |\xi\wh{\th}_{01}(t,\xi)|\cA_k^s(t,\eta)|\wh{X}_2(t,k,\eta)|d\xi d\eta\nonumber\\
&\lesssim\sum_{k\neq0}\int_{\xi,\eta}\cA_k^s(t,\eta)|k|(k^2+(\eta-\xi-kt)^2)^{-\f12}|\wh{X}_1(t,k,\eta-\xi)||\xi\wh{\th}_{01}(t,\xi)|\nonumber\\
&\quad\times\cA_k^s(t,\eta)|\wh{X}_2(t,k,\eta)|d\xi d\eta+\sum_{k\neq0}\int_{\xi,\eta}\cA_k^s(t,\eta)|k|(k^2+(\eta-\xi-kt)^2)^{-\f34}\nonumber\\
&\quad\times|\wh{X}_1(t,k,\eta-\xi)| |\xi|^{\f32}|\wh{\th}_{01}(t,\xi)|\cA_k^s(t,\eta)|\wh{X}_2(t,k,\eta)|d\xi d\eta\nonumber\\
&\lesssim\|\pa_z(-\D_L)^{-\f12}\cA X_1\|_{L^2}\|\th_{01}\|_{H^{s+1}}\|\cA X_2\|_{L^2}+\|\pa_z(-\D_L)^{-\f12}\cA X_1\|_{L^2}\|\th_{01}\|_{H^{s+2}}\|\cA X_2\|_{L^2}\\
&\leq C\ep\nu^{\f13}(\mathfrak{ED})^{\f12}(\mathfrak{CK}_2)^{\f12}.
\end{align*}

For $J_{22}$, by Young's inequality, the interpolation inequality and the bootstrap hypotheses, we have
\begin{align*}
|J_{22}|&\lesssim\sum_{k\neq0}\int_{\xi,\eta}\cA_k^s(t,\eta)|k|(k^2+(\eta-kt)^2)^{\f14}(k^2+(\eta-\xi-kt)^2)^{-\f34}|\wh{X}_1(t,k,\eta-\xi)|\nonumber\\
&\quad\times |\xi\wh{\th}_{02}(\xi)|\cA_k^s(t,\eta)|\wh{X}_2(t,k,\eta)|d\xi d\eta\nonumber\\
&\lesssim\|(-\D_L)^{\f14}\cA X_2\|_{L^2}\|(-\D_L)^{-\f14}\cA X_1\|_{L^2}\|\pa_y \th_{02}\|_{H^s}\nonumber\\
&\lesssim\|\na_L\cA X_2\|_{L^2}^{\f12}\|\cA X_2\|_{L^2}^{\f12}\|(-\D_L)^{-\f12}\cA X_1\|_{L^2}^{\f12}\|\cA X_1\|_{L^2}^{\f12}\|\pa_y\th_{02}\|_{H^s}\nonumber\\
&\leq C\ep\nu^{\f12}\mathfrak{D}^{\f14}(\mathfrak{CK}_2)^{\f14}\|\pa_y\th_{02}\|_{H^s}.
\end{align*}

Thus, we obtain
\begin{align*}
|J_2|\leq C\ep\nu^{\f13}(\mathfrak{ED})^{\f12}(\mathfrak{CK}_2)^{\f12}+C\ep\nu^{\f12}\mathfrak{D}^{\f14}(\mathfrak{CK}_2)^{\f14}\|\pa_y\th_{02}\|_{H^s}.
\end{align*}
\subsection{Treatment of $J_3$}
For $J_3$, similarly as the estimate of $I_2$, we get 
\begin{align*}
|J_3|&\leq C\ep\nu^{\f14}\mathfrak{CK}_2+C\ep^3\nu^{\f54}\langle t\rangle^{-2}+C\ep\nu^{\f38}(\mathfrak{CK}_2)^{\f32}\|\pa_y\langle D_y\rangle^sf_0\|_{L^2}^{\f12}+C\ep^{\f32}\nu^{\f98}\langle t\rangle^{-\f32}
\|\pa_y\langle D_y\rangle^sf_0\|_{L^2}^{\f12}.
\end{align*}

\section{the interactions between non-zero modes}\label{sec nonzero modes}
In this section, we study the nonlinear interactions between the non-zero modes. We mainly prove \eqref{eq:I_3I_4} in Lemma \ref{lem:  I} and \eqref{eq:J_4567} in Lemma \ref{lem:  J}.
\subsection{Treatment of $I_3$}
By using the fact that
\begin{align}\label{u1}
|\wh{\wt{u}}^1(t,k,\eta)|&\lesssim|\eta-kt||k|^{-\f12}(k^2+(\eta-kt)^2)^{-\f34}|\wh{X}_1(t,k,\eta)|\nonumber\\
&\lesssim|k|^{-\f12}(k^2+(\eta-kt)^2)^{-\f14}|\wh{X}_1(t,k,\eta)|,
\end{align}
and  $|l|^{\f12}\lesssim |k-l|^{\f12}+|k|^{\f12}$, we get that for $I_3$,
\begin{align*}
|I_3|
&\lesssim\sum_{k\neq0,l}\int_{\eta,\xi}\cA_k^s(t,\eta)|k-l|^{-\f12}\big((k-l)^2+(\eta-\xi-(k-l)t)^2\big)^{-\f14}|\wh{X}_1(t,k-l,\eta-\xi)|\nonumber\\
&\quad\times|l|^{\f12}(l^2+(\xi-lt)^2)^{\f14}|\wh{X}_1(t,l,\xi)|\cA_k^s(t,\eta)|k|^{\f12}(k^2+(\eta-kt)^2)^{-\f14}|\wh{X}_1(t,k,\eta)|d\xi d\eta\nonumber\\
&\lesssim\sum_{k\neq0,l}\int_{\eta,\xi}\cA_k^s(t,\eta)\big((k-l)^2+(\eta-\xi-(k-l)t)^2\big)^{-\f14}|\wh{X}_1(t,k-l,\eta-\xi)|\nonumber\\
&\quad\times(l^2+(\xi-lt)^2)^{\f14}|\wh{X}_1(t,l,\xi)|\cA_k^s(t,\eta)|k|^{\f12}(k^2+(\eta-kt)^2)^{-\f14}|\wh{X}_1(t,k,\eta)|d\xi d\eta\nonumber\\
&\quad+\sum_{k\neq0,l}\int_{\eta,\xi}\cA_k^s(t,\eta)|k-l|^{-\f12}\big((k-l)^2+(\eta-\xi-(k-l)t)^2\big)^{-\f14}|\wh{X}_1(t,k-l,\eta-\xi)|\nonumber\\
&\quad\times(l^2+(\xi-lt)^2)^{\f14}|\wh{X}_1(t,l,\xi)|\cA_k^s(t,\eta)|k|(k^2+(\eta-kt)^2)^{-\f14}|\wh{X}_1(t,k,\eta)|d\xi d\eta.
\end{align*}
Then by Young's inequality, the interpolation inequality  and the bootstrap hypotheses, we get that 
\begin{align*}
|I_3|&\lesssim\|(-\D_L)^{-\f14}\cA X_1\|_{L^2}\|(-\D_L)^{\f14}\cA X_1\|_{L^2}\|\pa_z^{\f12}(-\D_L)^{-\f14}\cA X_1\|_{L^2}\nonumber\\
&\quad+\|\pa_z^{-\f12}(-\D_L)^{-\f14}\cA X_1\|_{L^2}\|(-\D_L)^{\f14}\cA X_1\|_{L^2}\|\pa_z(-\D_L)^{-\f14}\cA X_1\|_{L^2}\nonumber\\
&\lesssim\|(-\D_L)^{-\f12}\cA X_1\|_{L^2}^{\f34}\|\na_L\cA X_1\|_{L^2}^{\f34}\|\cA X_1\|_{L^2}\|\pa_z(-\D_L)^{-\f12}\cA X_1\|_{L^2}^{\f12}\\
&\quad+\|\pa_z(-\D_L)^{-\f12}\cA X_1\|_{L^2}^{\f12}\|\cA X_1\|_{L^2}\|\na_L\cA X_1\|_{L^2}^{\f12}\|\pa_z(-\D_L)^{-\f12}\cA X_1\|_{L^2}^{\f12}\|\pa_z\cA X_1\|_{L^2}^{\f12}\\
&\lesssim\|\pa_z(-\D_L)^{-\f12}\cA X_1\|_{L^2}^{\f54}\|\na_L\cA X_1\|_{L^2}^{\f34}\|\cA X_1\|_{L^2}+\|\pa_z(-\D_L)^{-\f12}\cA X_1\|_{L^2}\|\cA X_1\|_{L^2}\|\na_L\cA X_1\|_{L^2}\\
&\leq C\ep\mathfrak{D}+C\ep\mathfrak{CK}_2.
\end{align*}
\subsection{Treatment of $I_4$}
For $I_4$, by the definition \eqref{def f0 th0}, \eqref{u2} and using the fact that
\begin{align*}
(l^2+(\xi-lt)^2)^{\f14}\lesssim(l^2+(\eta-\xi-lt)^2)^{\f14}+(l^2+(\eta-lt)^2)^{\f14},
\end{align*}
we have
\begin{align*}
|I_4|
&\lesssim\sum_{k\neq0}\int_{\eta,\xi}\cA_k^s(t,\eta)|k|^{\f12}(k^2+(\eta-kt)^2)^{-\f14}|k-l|^{\f12}((k-l)^2+(\eta-\xi-(k-l)t)^2)^{-\f34}\nonumber\\
&\quad\times|\wh{X}_1(t,k-l,\eta-\xi)||\xi-kt||l|^{-\f12}(l^2+(\xi-lt)^2)^{\f14}|\wh{X}_1(t,l,\xi)|\cA_k^s(t,\eta)|\wh{X}_1(t,k,\eta)|d\xi d\eta\nonumber\\
&\lesssim\sum_{k\neq0,l}\int_{\eta,\xi}\cA_k^s(t,\eta)|k-l|^{\f12}((k-l)^2+(\eta-\xi-(k-l)t)^2)^{-\f12}|\wh{X}_1(t,k-l,\eta-\xi)|\nonumber\\
&\quad\times|\xi-kt||l|^{-\f12}|\wh{X}_1(t,l,\xi)|\cA_k^s(t,\eta)|k|^{\f12}(k^2+(\eta-kt)^2)^{-\f14}|\wh{X}_1(t,k,\eta)|d\xi d\eta\nonumber\\
&\quad+\sum_{k\neq0,l}\int_{\eta,\xi}\cA_k^s(t,\eta)|k-l|^{\f12}((k-l)^2+(\eta-\xi-(k-l)t)^2)^{-\f34}|\wh{X}_1(t,k-l,\eta-\xi)|\nonumber\\
&\quad\times|\xi-kt||l|^{-\f12}|\wh{X}_1(t,l,\xi)|\cA_k^s(t,\eta)|k|^{\f12}|\wh{X}_1(t,k,\eta)|d\xi d\eta\\
&\eqdef I_{41}+I_{42}.
\end{align*}

For $I_{41}$, we obtain that by Young's inequality and the bootstrap hypotheses, 
\begin{align*}
|I_{41}|&\lesssim\|\pa_z(-\D_L)^{-\f12}\cA X_1\|_{L^2}\|\na_L\cA X_1\|_{L^2}\|\cA X_1\|_{L^2}\\
&\leq C\ep\mathfrak{D}^{\f12}(\mathfrak{CK}_2)^{\f12}.
\end{align*}

For $I_{42}$, by using $|k|^{\f12}\lesssim |k-l|^{\f12}+|l|^{\f12}$, by Young's inequality and the bootstrap hypotheses, we have
\begin{align*}
|I_{42}|&\lesssim\sum_{k\neq0,l}\int_{\eta,\xi}\cA_k^s(t,\eta)|k-l|((k-l)^2+(\eta-\xi-(k-l)t)^2)^{-\f34}|\wh{X}_1(t,k-l,\eta-\xi)|\nonumber\\
&\quad\times|\xi-kt||l|^{-\f12}|\wh{X}_1(t,l,\xi)|\cA_k^s(t,\eta)|\wh{X}_1(t,k,\eta)|d\xi d\eta\nonumber\\
&\quad+\sum_{k\neq0,l}\int_{\eta,\xi}\cA_k^s(t,\eta)|k-l|^{\f12}((k-l)^2+(\eta-\xi-(k-l)t)^2)^{-\f34}|\wh{X}_1(t,k-l,\eta-\xi)|\nonumber\\
&\quad\times|\xi-kt||\wh{X}_1(t,l,\xi)|\cA_k^s(t,\eta)|\wh{X}_1(t,k,\eta)|d\xi d\eta\nonumber\\
&\lesssim\|\pa_z(-\D_L)^{-\f12}\cA X_1\|_{L^2}\|\na_L\cA X_1\|_{L^2}\|\cA X_1\|_{L^2}\leq C\ep\mathfrak{D}^{\f12}(\mathfrak{CK}_2)^{\f12}.
\end{align*}

Thus, we deduce
\begin{align*}
|I_4|\leq C\ep\mathfrak{D}^{\f12}(\mathfrak{CK}_2)^{\f12}\leq C\ep\mathfrak{D}+C\ep\mathfrak{CK}_2.
\end{align*}

\subsection{Treatment of $J_4$}
For $J_4$,  we get that by the definition \eqref{def f0 th0},
\begin{align*}
J_4
&=-2\g^2\s^{-1}\int\cA\big(\pa_z\cN(\wt{u}_{\neq}^1\pa_z\th_{\neq})\big)\cA X_2dzdy\\
&=-\int\cA\big(\pa_z\cN(\wt{u}^1_{\neq}\cN^{-1}X_2)\big)\cA X_2dzdy-\s^{-1}\int\cA\big(\pa_z\cN(\wt{u}^1_{\neq}\dot{\cN}X_1)\big)\cA X_2dzdy\\
&=J_{41}+J_{42}.
\end{align*}

For $J_{41}$, by \eqref{u1} and using the fact that $|k|^{\f12}\lesssim|k-l|^{\f12}+|l|^{\f12}$, we get that by the interpolation  inequality and the bootstrap hypotheses, 
\begin{align}\label{J41}
|J_{41}|
&\lesssim\sum_{k\neq0,l}\int_{\xi,\eta}\cA_k^s(t,\eta)|k-l|^{-\f12}((k-l)^2+(\eta-\xi-(k-l)t)^2)^{-\f14}|\wh{X}_1(t,k-l,\eta-\xi)|\nonumber\\
&\quad\times|l|^{\f12}(l^2+(\xi-lt)^2)^{-\f14}|\wh{X}_2(t,l,\xi)||k|^{\f12}(k^2+(\eta-kt)^2)^{\f14}\cA_k^s(t,\eta)|\wh{X}_2(t,k,\eta)|d\xi d\eta\nonumber\\
&\lesssim\sum_{k\neq0,l}\int_{\xi,\eta}\cA_k^s(t,\eta)((k-l)^2+(\eta-\xi-(k-l)t)^2)^{-\f14}|\wh{X}_1(t,k-l,\eta-\xi)|\nonumber\\
&\quad\times|l|^{\f12}(l^2+(\xi-lt)^2)^{-\f14}|\wh{X}_2(t,l,\xi)|(k^2+(\eta-kt)^2)^{\f14}\cA_k^s(t,\eta)|\wh{X}_2(t,k,\eta)|d\xi d\eta\nonumber\\
&\quad+\sum_{k\neq0,l}\int_{\xi,\eta}\cA_k^s(t,\eta)|k-l|^{-\f12}((k-l)^2+(\eta-\xi-(k-l)t)^2)^{-\f14}|\wh{X}_1(t,k-l,\eta-\xi)|\nonumber\\
&\quad\times|l|(l^2+(\xi-lt)^2)^{-\f14}|\wh{X}_2(t,l,\xi)|(k^2+(\eta-kt)^2)^{\f14}\cA_k^s(t,\eta)|\wh{X}_2(t,k,\eta)|d\xi d\eta\nonumber\\
&\lesssim\|(-\D_L)^{-\f14}\cA X_1\|_{L^2}\|(-\D_L)^{\f14}\cA X_2\|_{L^2}\|\pa_z^{\f12}(-\D_L)^{-\f14}\cA X_2\|_{L^2}\nonumber\\
&\quad+\|\pa_z^{-\f12}(-\D_L)^{-\f14}\cA X_1\|_{L^2}\|(-\D_L)^{\f14}\cA X_2\|_{L^2}\|\pa_z(-\D_L)^{-\f14}\cA X_2\|_{L^2}\nonumber\\
&\lesssim\|(-\D_L)^{-\f12}\cA X_1\|_{L^2}^{\f34}\|\na_L\cA X_1\|_{L^2}^{\f14}\|\na_L\cA X_2\|_{L^2}^{\f12}\|\cA X_2\|_{L^2}\|\pa_z(-\D_L)^{-\f12}\cA X_2\|_{L^2}^{\f12}\nonumber\\
&\quad+\|\pa_z(-\D_L)^{-\f12}\cA X_1\|_{L^2}^{\f12}\|\cA X_1\|_{L^2}^{\f12}\|\cA X_2\|_{L^2}^{\f12}\|\na_L\cA X_2\|_{L^2}\|\pa_z(-\D_L)^{-\f12}\cA X_2\|_{L^2}^{\f12}\nonumber\\
&\leq C\ep\nu^{\f18}\mathfrak{D}^{\f38}(\mathfrak{CK}_2)^{\f58}+C\ep\mathfrak{D}^{\f12}(\mathfrak{CK}_2)^{\f12}.
\end{align}

Similarly as $J_{41}$,  we get that for $J_{42}$,
\begin{align}\label{J42}
|J_{42}|&\lesssim\sum_{k\neq0,l}\int_{\xi,\eta}\cA_k^s(t,\eta)|k-l|^{-\f12}\big((k-l)^2+(\eta-\xi-(k-l)t)^2\big)^{-\f14}|\wh{X}_1(k-l,\eta-\xi)|\nonumber\\
&\quad\times|l|^{\f12}\big(l^2+(\xi-lt)^2\big)^{-\f14}|\wh{X}_1(l,\xi)||k|^{\f12}(k^2+(\eta-kt)^2)^{\f14}\cA_k^s(t,\eta)|\wh{X}_2(k,\eta)|d\xi d\eta\\
&\lesssim\|(-\D_L)^{-\f14}\cA X_1\|_{L^2}\|(-\D_L)^{\f14}\cA X_2\|_{L^2}\|\pa_z^{\f12}(-\D_L)^{-\f14}\cA X_1\|_{L^2}\nonumber\\
&\quad+\|\pa_z^{-\f12}(-\D_L)^{-\f14}\cA X_1\|_{L^2}\|(-\D_L)^{\f14}\cA X_2\|_{L^2}\|\pa_z(-\D_L)^{-\f14}\cA X_1\|_{L^2}\nonumber\\
&\lesssim\|\pa_z(-\D_L)^{-\f12}\cA X_1\|_{L^2}^{\f54}\|\na_L\cA X_1\|_{L^2}^{\f14}\|\na_L\cA X_2\|_{L^2}^{\f12}\|\cA X_2\|_{L^2}^{\f12}\|\cA X_1\|_{L^2}^{\f12}\nonumber\\
&\quad+\|\pa_z(-\D_L)^{-\f12}\cA X_1\|_{L^2}\|\cA X_1\|_{L^2}^{\f12}\|\cA X_2\|_{L^2}^{\f12}\|\na_L\cA X_2\|_{L^2}^{\f12}\|\pa_z\cA X_1\|_{L^2}^{\f12}\nonumber\\
&\leq  C\ep\nu^{\f18}\mathfrak{D}^{\f38}(\mathfrak{CK}_2)^{\f58}+C\ep\mathfrak{D}^{\f12}(\mathfrak{CK}_2)^{\f12}.
\end{align}

Thus, from \eqref{J41}  and \eqref{J42}, we obtain
\begin{align*}
|J_4|\leq  C\ep\nu^{\f18}\mathfrak{D}^{\f38}(\mathfrak{CK}_2)^{\f58}+C\ep\mathfrak{D}^{\f12}(\mathfrak{CK}_2)^{\f12}\leq C\ep\mathfrak{D}+C\ep\mathfrak{CK}_2.
\end{align*}

\subsection{Treatment of $J_5$}
For $J_5$, by \eqref{u2}, \eqref{th neq} and using the fact that $|k|^{\f12}\lesssim |k-l|^{\f12}+|l|^{\f12}$, we get 
\begin{align*}
|J_5|&\lesssim\sum_{k\neq0,l}\int_{\xi,\eta}\cA_k^s(t,\eta)|k-l|^{\f12}\big((k-l)^2+(\eta-\xi-(k-l)t)^2\big)^{-\f34}|\wh{X}_1(t,k-l,\eta-\xi)|\\
&\quad\times|\xi-lt||l|^{-\f12}(l^2+(\xi-lt)^2)^{-\f14}\Big(|\wh{X}_1(t,l,\xi)|+|\wh{X}_2(t,l,\xi)|\Big)\\
&\quad\times|k|^{\f12}(k^2+(\eta-kt)^2)^{\f14}\cA_k^s(t,\eta)|\wh{X}_2(t,k,\eta)|d\xi d\eta\\
&\lesssim\sum_{k\neq0,l}\int_{\xi,\eta}\cA_k^s(t,\eta)|k-l|\big((k-l)^2+(\eta-\xi-(k-l)t)^2\big)^{-\f34}|\wh{X}_1(t,k-l,\eta-\xi)|\\
&\quad\times|\xi-lt||l|^{-\f12}(l^2+(\xi-lt)^2)^{-\f14}\Big(|\wh{X}_1(t,l,\xi)|+|\wh{X}_2(t,l,\xi)|\Big)\\
&\quad\times(k^2+(\eta-kt)^2)^{\f14}\cA_k^s(t,\eta)| \wh{X}_2(t,k,\eta)|d\xi d\eta\\
&\quad+\sum_{k\neq0,l}\int_{\xi,\eta}\cA_k^s(t,\eta)|k-l|^{\f12}\big((k-l)^2+(\eta-\xi-(k-l)t)^2\big)^{-\f34}|\wh{X}_1(t,k-l,\eta-\xi)|\\
&\quad\times|\xi-lt|(l^2+(\xi-lt)^2)^{-\f14}\Big(|\wh{X}_1(t,l,\xi)|+|\wh{X}_2(t,l,\xi)|\Big)\\
&\quad\times(k^2+(\eta-kt)^2)^{\f14}\cA_k^s(t,\eta)| \wh{X}_2(t,k,\eta)|d\xi d\eta.
\end{align*}
And then, by interpolation inequality and the bootstrap hypotheses, we obtain that 
\begin{align*}
|J_5|&\lesssim\|\pa_z(-\D_L)^{-\f12}\cA X_1\|_{L^2}\Big(\|(-\D_L)^{\f14}\cA X_1\|_{L^2}+\|(-\D_L)^{\f14}\cA X_2\|_{L^2}\Big)\|(-\D_L)^{\f14}\cA X_2\|_{L^2}\\
&\lesssim\|\pa_z(-\D_L)^{-\f12}\cA X_1\|_{L^2}\|\na_L\cA X_2\|_{L^2}\|\cA X_2\|_{L^2}\\
&\quad+\|\pa_z(-\D_L)^{-\f12}\cA X_1\|_{L^2}\|\na_L\cA X_1\|_{L^2}^{\f12}\|\cA X_1\|_{L^2}^{\f12}\|\na_L\cA X_2\|_{L^2}^{\f12}\|\cA X_2\|_{L^2}^{\f12}\\
&\leq C\ep\mathfrak{D}^{\f12}(\mathfrak{CK}_2)^{\f12}\leq C\ep\mathfrak{D}+C\ep\mathfrak{CK}_2.
\end{align*}
\subsection{Treatment of $J_6$}
For $J_6$ and $J_7$, we can get the same estimates as $I_3$ and $I_4$:\begin{align*}
|J_6|+|J_7|&\lesssim\|\pa_z(-\D_L)^{-\f12}\cA X_1\|_{L^2}^{\f34}\|\na_L\cA X_1\|_{L^2}^{\f14}\|\cA X_1\|_{L^2}^{\f12}\|\cA X_2\|_{L^2}^{\f12}\|\pa_z(-\D_L)^{-\f12}\cA X_2\|_{L^2}^{\f12}\\
&\quad+\|\pa_z(-\D_L)^{-\f12}\cA X_1\|_{L^2}^{\f12}\|\cA X_1\|_{L^2}\|\na_L\cA X_1\|_{L^2}^{\f12}\|\pa_z(-\D_L)^{-\f12}\cA X_2\|_{L^2}^{\f12}\|\na_L\cA X_2\|_{L^2}^{\f12}\\
&\quad+\|\pa_z(-\D_L)^{-\f12}\cA X_1\|_{L^2}\|\na_L\cA X_1\|_{L^2}\|\cA X_2\|_{L^2}\\
&\leq C\ep\mathfrak{D}+C\ep\mathfrak{CK}_2.
\end{align*}

\section{Energy estimates on zero modes}\label{section zero mode}
In this section, we mainly prove Proposition \ref{prop: zero}, namely, we study the energy estimates of $f_0$, $\wt{u}_0^1$ and $\th_{02}$. 

The most difficult part is the energy estimate on $\th_{02}$. There are two-type nonlinear interactions $\Gamma_1$ and $\Gamma_2$ (see \eqref{Gamma12}). The estimate of $\Gamma_1$ is easy to obtain as \eqref{cL1}. The low-high interaction $(\wt{u}_{\neq}^2)_{\text{low}}((\pa_y-t\pa_z)\th_{\neq})_{\text{high}}$ in $\Gamma_2$ is discussed in Section \ref{optimality}. The estimate of the high-low interaction $(\wt{u}_{\neq}^2)_{\text{high}}((\pa_y-t\pa_z)\th_{\neq})_{\text{low}}$ in $\Gamma_2$ is the most technical part. 
Indeed, for any fixed frequency $\eta$, when $t\in [t(\eta),2\eta]$, there is $k$ such that $k\eta>0$ and $t\in I_{k,\eta}$. At this critical time region $I_{k,\eta}$, the $k$th-mode of $\tilde{u}^2_{\neq}$, namely $\widehat{\wt{u}}^2_{\neq}(t,k,\xi)$ with $\xi\sim \eta$, causes a rapid growth of $\th_{02}$. Such growth of $\th_{02}$ varies in time and frequency. Thus we construct the time-dependent multiplier $m$, which allows us to control the growth of $\th_{02}$. 
\subsection{Energy estimates on  $f_0$}
For $f_0$, we have
\begin{lemma}\label{lem f0}
Under the bootstrap hypotheses, it holds that 
\begin{align*}
\f{d}{dt}\|f_0\|_{H^s}^2+2\nu\|\pa_yf_0\|_{H^s}^2\leq C\ep\nu^{\f14}\mathfrak{D}^{\f14}(\mathfrak{CK}_2)^{\f14}\|\pa_yf_0\|_{H^s}.
\end{align*}
\end{lemma}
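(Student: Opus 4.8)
The plan is to run a weighted $H^s$ energy estimate on the zero–mode equation \eqref{eq: f0}, the only delicate point being to absorb the quadratic forcing $-\pa_y(\wt{u}_{\neq}^2 f_{\neq})_0$ into the dissipative quantities $\mathfrak{D}$ and $\mathfrak{CK}_2$. First I would apply $\langle D_y\rangle^s$ to \eqref{eq: f0}, pair with $\langle D_y\rangle^s f_0$ in $L^2_y$, and integrate by parts in $y$ to move one derivative off the nonlinearity, obtaining
\begin{align*}
\f12\f{d}{dt}\|f_0\|_{H^s}^2+\nu\|\pa_yf_0\|_{H^s}^2=\int\langle D_y\rangle^s(\wt{u}_{\neq}^2f_{\neq})_0\;\langle D_y\rangle^s\pa_yf_0\,dy .
\end{align*}
Thus everything reduces to bounding this trilinear form by $C\ep\nu^{\f14}\mathfrak{D}^{\f14}(\mathfrak{CK}_2)^{\f14}\|\pa_yf_0\|_{H^s}$, after which multiplying the identity by $2$ gives the lemma.

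To estimate the trilinear form I would pass to the Fourier side, writing $(\wt{u}_{\neq}^2f_{\neq})_0(\eta)=\sum_{l\neq0}\int\wh{\wt u}^2(-l,\eta-\xi)\wh{f}_{\neq}(l,\xi)\,d\xi$, and split the weight by $\langle\eta\rangle^s\lesssim\langle l,\eta-\xi\rangle^s+\langle l,\xi\rangle^s$ into a contribution that puts the top $s$ derivatives on $f_{\neq}$ (the main one) and one that puts them on $\wt{u}_{\neq}^2$. In each contribution I would substitute $f_{\neq}=\mathcal{N}X_1$ from \eqref{def f0 th0} together with \eqref{u2} and $\wt u_{\neq}^2=\pa_z(-\D_L)^{-1}\mathcal{N}X_1=\pa_z|\pa_z|^{-\f12}(-\D_L)^{-\f34}X_1$, use $\cA_k^s\approx e^{c\nu^{\f13}t}\langle k,\eta\rangle^s$ from \eqref{eq:A bound} and $\cM\approx1$ from \eqref{M bound}, and exploit the elementary fact that on non-zero modes $(l^2+(\zeta-lt)^2)^{-a}\leq|l|^{-2a}$ for $a\geq0$ to trade negative powers of $-\D_L$ for negative powers of $|\pa_z|$. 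This lets me treat the low-frequency factor in $\ell^1_lL^1_\xi$ (which costs a fixed number of spare derivatives, harmless since $s\geq6$) while the high-frequency factor carries the weight $(-\D_L)^{\f14}\langle\nabla\rangle^s$; applying Young's convolution inequality in $(l,\xi)$ and the interpolation inequalities
\begin{align*}
\|(-\D_L)^{\f14}\cA X_1\|_{L^2}\lesssim\|\na_L\cA X_1\|_{L^2}^{\f12}\|\cA X_1\|_{L^2}^{\f12},\qquad
\|(-\D_L)^{-\f12}\cA X_1\|_{L^2}\lesssim\|\pa_z(-\D_L)^{-\f12}\cA X_1\|_{L^2}
\end{align*}
(the latter again because $|l|\geq1$), I expect to reach
\begin{align*}
\Big|\int\langle D_y\rangle^s(\wt{u}_{\neq}^2f_{\neq})_0\;\langle D_y\rangle^s\pa_yf_0\,dy\Big|\lesssim\|\cA X_1\|_{L^2}\,\|\na_L\cA X_1\|_{L^2}^{\f12}\,\|\pa_z(-\D_L)^{-\f12}\cA X_1\|_{L^2}^{\f12}\,\|\pa_yf_0\|_{H^s}.
\end{align*}
This is the exact analogue of the estimate of $J_{22}$ in Section \ref{sec zero-nonzero mode}, with the role of $\pa_y\th_{02}$ played by $\pa_yf_0$. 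Inserting the bootstrap bounds $\|\cA X_1\|_{L^2}\leq4\ep\nu^{\f12}$ from \eqref{boot1}, $\|\na_L\cA X_1\|_{L^2}^2\leq\tfrac{4}{3\nu}\mathfrak{D}$ and $\|\pa_z(-\D_L)^{-\f12}\cA X_1\|_{L^2}^2\leq K^{-1}\mathfrak{CK}_2$ then yields $\ep\nu^{\f12}\cdot\nu^{-\f14}\mathfrak{D}^{\f14}\cdot(\mathfrak{CK}_2)^{\f14}=\ep\nu^{\f14}\mathfrak{D}^{\f14}(\mathfrak{CK}_2)^{\f14}$, which combined with the energy identity is the claim.

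The step I expect to be the main obstacle is precisely this last trilinear bound, and the reason is that the non-zero mode $f_{\neq}=\mathcal{N}X_1$ has no small high-regularity norm: the multiplier $\mathcal{N}=|\pa_z|^{-\f12}(-\D_L)^{\f14}$ grows (like $\langle t\rangle^{\f12}$ in physical scale), so the $s$ derivatives together with the extra $(-\D_L)^{\f14}$ coming from $\mathcal{N}$ cannot be dumped onto a factor controlled by the only available smallness $\|\cA X_1\|_{L^2}\lesssim\ep\nu^{\f12}$; they must be paid for out of the dissipation, with the two budgets $\mathfrak{D}$ and $\mathfrak{CK}_2$ distributed between the two factors of the quadratic nonlinearity in exactly the $\tfrac14{+}\tfrac14$ proportion above. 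A cruder distribution (spending all the smoothing as $\mathfrak{D}^{\f12}$) would produce a bound $C\ep\mathfrak{D}^{\f12}\|\pa_yf_0\|_{H^s}$ that does not close against the diffusion term $\nu\|\pa_yf_0\|_{H^s}^2$ for small $\nu$, so the sharp weighting is genuinely needed. Everything else — the Fourier paraproduct split, Young's inequality, the interpolation identities — is routine and mirrors Section \ref{sec zero-nonzero mode}.
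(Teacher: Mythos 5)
Your proposal is correct and follows essentially the same route as the paper: an $H^s$ energy identity for \eqref{eq: f0} with one integration by parts, then a Fourier-side bound of the trilinear term using $f_{\neq}=\cN X_1$ and \eqref{u2}, with the smallness distributed (after interpolation on non-zero modes) exactly as $\|\cA X_1\|_{L^2}\|\na_L\cA X_1\|_{L^2}^{\f12}\|\pa_z(-\D_L)^{-\f12}\cA X_1\|_{L^2}^{\f12}$, which is equivalent to the paper's intermediate bound $\|(-\D_L)^{-\f12}\cA X_1\|_{L^2}\|\na_L\cA X_1\|_{L^2}^{\f12}\|\cA X_1\|_{L^2}^{\f12}$ and yields $C\ep\nu^{\f14}\mathfrak{D}^{\f14}(\mathfrak{CK}_2)^{\f14}\|\pa_yf_0\|_{H^s}$ via the bootstrap. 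Your extra paraproduct/Young bookkeeping is just a more explicit version of the step the paper states tersely, so there is no substantive difference.
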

\begin{proof}
Due to the fact that the zero mode $f_0$ satisfies
\begin{align*}
\pa_tf_0-\nu\pa_{yy}f_0=-\pa_y(\wt{u}_{\neq}^2f_{\neq})_0,
\end{align*}
we obtain that by using the energy method,
\begin{align*}
\f{d}{dt}\|\langle D_y\rangle^sf_0\|_{L^2}^2+2\nu\|\pa_y\langle D_y\rangle^sf_0\|_{L^2}^2&=-2\left\langle\langle D_y\rangle^s\pa_y(\wt{u}_{\neq}^2f_{\neq})_0, \langle D_y\rangle^sf_0\right\rangle_{L^2}.
\end{align*}
By \eqref{def f0 th0}, \eqref{u2}, using the interpolation inequality and the bootstrap hypotheses, we get
\begin{align*}
-2\left\langle\langle D_y\rangle^s\pa_y(\wt{u}_{\neq}^2f_{\neq})_0, \langle D_y\rangle^sf_0\right\rangle_{L^2}&\lesssim\|(-\D_L)^{-\f34}\cA  X_1\|_{L^2}\|(-\D_L)^{\f14}\cA  X_1\|_{L^2}\|\pa_yf_0\|_{H^s}\\
&\lesssim\|(-\D_L)^{-\f12}\cA  X_1\|_{L^2}\|\na_L\cA  X_1\|_{L^2}^{\f12}\|\cA  X_1\|_{L^2}^{\f12}\|\pa_yf_0\|_{H^s}\\
&\leq C\ep\nu^{\f14}\mathfrak{D}^{\f14}(\mathfrak{CK}_2)^{\f14}\|\pa_yf_0\|_{H^s}.
\end{align*}

Thus, we complete the proof of the lemma.
\end{proof}

\subsection{Energy estimates on  $\wt{u}_0^1$}
For $\wt{u}_0^1$, we mainly have the following lemma:
\begin{lemma}\label{lem u0}
Under the bootstrap hypotheses, there holds that 
\begin{align*}
\f{d}{dt}\|\wt{u}_0^1\|_{H^s}^2+2\nu\|\pa_y\wt{u}_0^1\|_{H^s}^2\leq C\ep\nu^{\f12}(\mathfrak{CK}_2)^{\f34}\|\pa_y\wt{u}_0^1\|_{H^s}^{\f12}+C\ep\nu^{\f{5}{12}}(\mathfrak{CK}_2)^{\f12}(\mathfrak{ED})^{\f14}\|\pa_y\wt{u}_0^1\|_{H^s}^{\f12}.
\end{align*}
\end{lemma}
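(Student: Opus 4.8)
The plan is to run a standard weighted energy estimate on the equation \eqref{eq: u0} for $\wt{u}_0^1$, exactly parallel to the proof of Lemma \ref{lem f0}, but keeping track of the extra $\pa_y$ that lands on the outside of the nonlinearity. Starting from
$$
\pa_t\wt{u}_0^1-\nu\pa_{yy}\wt{u}_0^1=-\pa_y(\wt{u}^2_{\neq}\wt{u}_{\neq}^1)_0,
$$
apply $\langle D_y\rangle^s$, pair with $\langle D_y\rangle^s\wt{u}_0^1$ in $L^2$, and use the diffusion term to absorb the derivative: since $\wt{u}_0^1$ is a pure zero mode, $\na_L$ acting on it is just $\pa_y$, and one integration by parts moves the external $\pa_y$ onto $\langle D_y\rangle^s\wt{u}_0^1$, producing the factor $\|\pa_y\langle D_y\rangle^s\wt{u}_0^1\|_{L^2}=\|\pa_y\wt{u}_0^1\|_{H^s}$. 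This gives
$$
\f{d}{dt}\|\wt{u}_0^1\|_{H^s}^2+2\nu\|\pa_y\wt{u}_0^1\|_{H^s}^2\lesssim \big\|(\wt{u}^2_{\neq}\wt{u}_{\neq}^1)_0\big\|_{H^s}\,\|\pa_y\wt{u}_0^1\|_{H^s}.
$$

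The heart of the matter is to bound $\|(\wt{u}^2_{\neq}\wt{u}^1_{\neq})_0\|_{H^s}$ in terms of the bootstrap quantities. Writing it on the Fourier side as $\sum_{k\neq0}\int \wh{\wt u}^2_{-k}\wh{\wt u}^1_k\,d\xi$ and using $\langle\eta\rangle^s\lesssim\langle k,\xi\rangle^s+\langle k,\eta-\xi\rangle^s$, I put $\langle D_y\rangle^s$ on the higher-frequency factor; then I substitute the pointwise bounds \eqref{u1} and \eqref{u2}, namely
$$
|\wh{\wt u}^2_{\neq}|\lesssim |k|^{\f12}(k^2+(\xi-kt)^2)^{-\f34}|\wh X_1|,\qquad
|\wh{\wt u}^1_{\neq}|\lesssim |k|^{-\f12}(k^2+(\eta-\xi-kt)^2)^{-\f14}|\wh X_1|,
$$
so that after Young's (convolution) inequality the product is controlled by $\|\pa_z(-\D_L)^{-\f12}\cA X_1\|_{L^2}$ paired with a lower-order factor $\|(-\D_L)^{-\f14}\cA X_1\|_{L^2}$ (or $\|\pa_z^{-1/2}(-\D_L)^{-1/4}\cA X_1\|_{L^2}$ depending on which factor is high), up to the $e^{-c\nu^{1/3}t}$ weight built into $\cA$. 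I then use the interpolation inequalities (the ones recorded implicitly in Lemma \ref{lem f0}'s proof and used throughout Sections \ref{sec zero-nonzero mode}--\ref{sec nonzero modes}), e.g.
$$
\|(-\D_L)^{-\f14}\cA X_1\|_{L^2}\lesssim \|(-\D_L)^{-\f12}\cA X_1\|_{L^2}^{\f12}\|\cA X_1\|_{L^2}^{\f12},\qquad
\|(-\D_L)^{\f14}\cA X_1\|_{L^2}\lesssim\|\na_L\cA X_1\|_{L^2}^{\f12}\|\cA X_1\|_{L^2}^{\f12},
$$
to convert these into powers of $(\mathfrak{CK}_2)^{1/2}$, $(\mathfrak{ED})^{1/2}$, $\mathfrak{D}^{1/2}$ and $\|\cA X_1\|_{L^2}$. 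Finally I invoke the bootstrap hypothesis \eqref{boot1}, which gives $\|\cA X_1\|_{L^2}\lesssim \ep\nu^{1/2}$, and the definition $\s=\sqrt{4\g^2-1}$, collecting the $\nu$-powers so that the two resulting terms read $C\ep\nu^{1/2}(\mathfrak{CK}_2)^{3/4}\|\pa_y\wt u_0^1\|_{H^s}^{1/2}$ and $C\ep\nu^{5/12}(\mathfrak{CK}_2)^{1/2}(\mathfrak{ED})^{1/4}\|\pa_y\wt u_0^1\|_{H^s}^{1/2}$ (the $\sqrt{\|\pa_y\wt u_0^1\|_{H^s}}$ appears because, unlike in Lemma \ref{lem f0}, the source in \eqref{est: u0} is of higher order $\nu^{7/6}$ and one splits $\|(\wt u^2_{\neq}\wt u^1_{\neq})_0\|_{H^s}\|\pa_y\wt u_0^1\|_{H^s}$ using an additional interpolation $\|\wt u^1_{\neq}\|\lesssim\|X_1\|^{1/2}\|\cdot\|^{1/2}$ to distribute half a derivative onto $\wt u_0^1$).

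The main obstacle I anticipate is the careful bookkeeping of which factor carries $\langle D_y\rangle^s$ and the attendant frequency split $|k|\lessgtr|\eta-\xi|$ versus $|k|\lessgtr|\xi|$: one must ensure that in the "bad" regime (high frequency on $\wt u^1_{\neq}$, which carries a weaker negative power of $(-\D_L)$) the extra derivative is still controlled, which forces the appearance of $(\mathfrak{ED})^{1/4}$ rather than a pure $(\mathfrak{CK}_2)$ term and is exactly why the second term of the claimed estimate has the weaker $\nu^{5/12}$ prefactor instead of $\nu^{1/2}$. Everything else is routine application of Young's inequality, interpolation, and the uniform bounds \eqref{M bound}--\eqref{eq:A bound} on the multiplier $\cA$.
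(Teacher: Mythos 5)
Your reduction via a full integration by parts does not prove the stated lemma. After moving the whole $\pa_y$ onto $\langle D_y\rangle^s\wt{u}_0^1$ you are left with $\|(\wt{u}^2_{\neq}\wt{u}^1_{\neq})_0\|_{H^s}\,\|\pa_y\wt{u}_0^1\|_{H^s}$, i.e.\ a \emph{full} power of $\|\pa_y\wt{u}_0^1\|_{H^s}$, whereas the lemma asserts a bound with only $\|\pa_y\wt{u}_0^1\|_{H^s}^{1/2}$. There is no pointwise-in-time control of $\|\pa_y\wt{u}_0^1\|_{H^s}$ in the bootstrap (only $\nu\int_0^t\|\pa_y\wt{u}_0^1\|_{H^s}^2$), so you cannot trade the surplus half power away, and your parenthetical fix --- interpolating $\wt{u}^1_{\neq}$ as $\|X_1\|^{1/2}\|\cdot\|^{1/2}$ ``to distribute half a derivative onto $\wt u_0^1$'' --- is not a legitimate operation: interpolating within the nonlinearity cannot transfer derivatives onto the other factor of the duality pairing. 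The half power is not cosmetic; it is exactly what allows the time integration (H\"older with exponents $4/3$ and $4$ against $\int\mathfrak{CK}_2$ and $\int\|\pa_y\wt u_0^1\|_{H^s}^2$) to produce the $\nu^{7/6}$ gain in \eqref{est: u0}, so the full-power version you derive is genuinely weaker.

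The paper's mechanism, which is the missing idea, is to \emph{not} integrate by parts fully: keep the factor $|\eta|\,|\wh{\wt u}_0^1(\eta)|$ on the Fourier side, split $|\eta|=|\eta|^{1/2}|\eta|^{1/2}$, retain $|\eta|^{1/2}$ on the zero mode (giving $\|\pa_y^{1/2}\langle D_y\rangle^s\wt u_0^1\|_{L^2}\lesssim\|\pa_y\wt u_0^1\|_{H^s}^{1/2}\|\wt u_0^1\|_{H^s}^{1/2}$, with the second factor absorbed by the bootstrap into the $\ep\nu^{1/2}$-type constants), and bound the other $|\eta|^{1/2}$ by the \emph{time-shifted} frequencies, $|\eta|^{1/2}\lesssim|\xi-kt|^{1/2}+|\eta-\xi+kt|^{1/2}$, which upgrades the weights coming from \eqref{u1}--\eqref{u2}: one term becomes $\|(-\D_L)^{-\f12}\cA X_1\|_{L^2}\|(-\D_L)^{-\f14}\cA X_1\|_{L^2}$ (yielding $C\ep\nu^{\f12}(\mathfrak{CK}_2)^{\f34}\|\pa_y\wt u_0^1\|_{H^s}^{\f12}$ after interpolation), and the other becomes $\|(-\D_L)^{-\f34}\cA X_1\|_{L^2}\|\cA X_1\|_{L^2}$ (yielding $C\ep\nu^{\f{5}{12}}(\mathfrak{CK}_2)^{\f12}(\mathfrak{ED})^{\f14}\|\pa_y\wt u_0^1\|_{H^s}^{\f12}$, the $\nu^{-\f1{12}}$ loss coming from converting $\|\cA X_1\|_{L^2}$ into $(\mathfrak{ED})^{\f12}$ via $\nu^{\f13}\||D_z|^{\f13}\cA X_1\|_{L^2}^2$). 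Your use of \eqref{u1}--\eqref{u2}, Young's inequality, interpolation, and the bootstrap is in the right spirit, but without the half-derivative sharing between the zero mode and the $\D_L$-weights the claimed estimate does not follow.
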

\begin{proof}
From \eqref{eq: u0}, we obtain that by using the energy method,
\begin{align*}
\f{d}{dt}\|\langle D_y\rangle^s\wt{u}_0^1\|_{L^2}^2+2\nu\|\pa_y\langle D_y\rangle^s\wt{u}_0^1\|_{L^2}^2&=-2\left\langle\langle D_y\rangle^s\pa_y(\wt{u}_{\neq}^2\wt{u}^1_{\neq})_0, \langle D_y\rangle^s\wt{u}_0^1\right\rangle_{L^2}\eqdef\Xi.
\end{align*}
By using \eqref{u1}, \eqref{u2} and the fact that $|\eta|^{\f12}\lesssim |\xi-kt|^{\f12}+|\eta-\xi+kt|^{\f12}$, we have 
\begin{align*}
|\Xi|
&\lesssim \sum_{k\neq0}\int_{\xi,\eta}\langle \eta\rangle^s(k^2+(\xi-kt)^2)^{-\f34}|\wh{X}_1(t,k,\xi)|(k^2+(\eta-\xi+kt)^2)^{-\f14}\\
&\quad\times|\wh{X}_1(t,-k,\eta-\xi)|\langle \eta\rangle^s|\eta||\wh{\wt{u}}_0^1(t, \eta)|d\xi d\eta\\
&\lesssim \sum_{k\neq0}\int_{\xi,\eta}\langle \eta\rangle^s(k^2+(\xi-kt)^2)^{-\f12}|\wh{X}_1(t,k,\xi)|(k^2+(\eta-\xi+kt)^2)^{-\f14}\\
&\quad\times|\wh{X}_1(t,-k,\eta-\xi)|\langle \eta\rangle^s|\eta|^{\f12}|\wh{\wt{u}}_0^1(t, \eta)|d\xi d\eta\\
&\quad+\sum_{k\neq0}\int_{\xi,\eta}\langle \eta\rangle^s(k^2+(\xi-kt)^2)^{-\f34}|\wh{X}_1(t,k,\xi)||\wh{X}_1(t,-k,\eta-\xi)|\langle \eta\rangle^s|\eta|^{\f12}|\wh{\wt{u}}_0^1(t, \eta)|d\xi d\eta\\
&\eqdef\Xi_1+\Xi_2.
\end{align*}
Then, by the interpolation inequality and the bootstrap hypotheses, we get
\begin{align*}
|\Xi_1|&\lesssim\|(-\D_L)^{-\f12}\cA  X_1\|_{L^2}\|(-\D_L)^{-\f14}\cA  X_1\|_{L^2}\|\pa_y^{\f12}\langle D_y\rangle^s\wt{u}_0^1\|_{L^2}\\
&\lesssim\|(-\D_L)^{-\f12}\cA  X_1\|_{L^2}\|(-\D_L)^{-\f12}\cA  X_1\|_{L^2}^{\f12}\|\cA  X_1\|_{L^2}^{\f12}\|\pa_y\langle D_y\rangle^s\wt{u}_0^1\|_{L^2}^{\f12}\|\langle D_y\rangle^s\wt{u}_0^1\|_{L^2}^{\f12}\\
&\leq C\ep\nu^{\f12}(\mathfrak{CK}_2)^{\f34}\|\pa_y\langle D_y\rangle^s\wt{u}_0^1\|_{L^2}^{\f12},
\end{align*}
and 
\begin{align*}
|\Xi_2|&\lesssim\|(-\D_L)^{-\f34}\cA  X_1\|_{L^2}\|\cA  X_1\|_{L^2}\|\pa_y^{\f12}\langle D_y\rangle^s\wt{u}_0^1\|_{L^2}\\
&\lesssim \|(-\D_L)^{-\f34}\cA  X_1\|_{L^2}\|\cA  X_1\|_{L^2}\|\pa_y\langle D_y\rangle^s\wt{u}_0^1\|_{L^2}^{\f12}\|\langle D_y\rangle^s\wt{u}_0^1\|_{L^2}^{\f12}\\
&\leq C\ep\nu^{\f{5}{12}}(\mathfrak{CK}_2)^{\f12}(\mathfrak{ED})^{\f14}\|\pa_y\langle D_y\rangle^s\wt{u}_0^1\|_{L^2}^{\f12}.
\end{align*}

Thus, we complete the proof of the lemma.
\end{proof}

\subsection{Energy estimates on  $\th_{02}$}
To obtain the estimate \eqref{est: th02}, we mainly need to prove the following lemma.
\begin{lemma}\label{lem th02}
Under the bootstrap hypotheses, there holds that 
\begin{align*}
&\f{d}{dt}\Big\|\langle D_y\rangle^sm^{-1}\th_{02}\Big\|_{L^2}^2+2\nu\Big\|\pa_y\langle D_y\rangle^sm^{-1}\th_{02}\Big\|_{L^2}^2+2\Big\|\sqrt{\f{\pa_tm}{m}}\langle D_y\rangle^sm^{-1}\th_{02}\Big\|_{L^2}^2\\
&\quad\leq C\ep\nu^{\f{7}{12}}(\mathfrak{ED})^{\f12}(\mathfrak{CK}_2)^{\f12}+C\ep^{\f52}\nu^{\f54}\langle t\rangle^{-\f32}\mathfrak{D}^{\f14}+C\ep^3\nu^{\f74}\langle t\rangle^{-1}e^{-c\nu^{\f13}t}\\
&\quad\quad+C\ep\nu^{\f13}(\mathfrak{CK}_3)^{\f12}\Big\|\sqrt{\f{\pa_tm}{m}}m^{-1}\th_{02}\Big\|_{H^s}.
\end{align*}
\end{lemma}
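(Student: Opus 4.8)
The plan is to run a weighted energy estimate for the zero‑mode equation \eqref{eq: th02} with the Fourier multiplier $\langle D_y\rangle^sm^{-1}$. Applying $\langle D_y\rangle^sm^{-1}$ to $\pa_t\th_{02}-\nu\pa_{yy}\th_{02}=-(\wt{u}_{\neq}\cdot\na_L\th_{\neq})_0$ and pairing with $\langle D_y\rangle^sm^{-1}\th_{02}$ in $L^2$, the viscous term gives $2\nu\|\pa_y\langle D_y\rangle^sm^{-1}\th_{02}\|_{L^2}^2$ on the left (the multiplier $m^{-1}$ acts only in $y$, hence commutes with $\pa_{yy}$), while the time derivative of the weight, through $\pa_t(m^{-1})=-\f{\pa_tm}{m}m^{-1}$, produces the Cauchy--Kovalevskaya term $2\big\|\sqrt{\f{\pa_tm}{m}}\,\langle D_y\rangle^sm^{-1}\th_{02}\big\|_{L^2}^2$ on the left, exactly as in the statement; here one uses $m\approx1$ so that $\langle D_y\rangle^sm^{-1}$ is comparable with $\langle D_y\rangle^s$. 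Everything then reduces to bounding the single nonlinear bracket
\[
\cR:=-2\big\langle \langle D_y\rangle^sm^{-1}(\wt{u}_{\neq}\cdot\na_L\th_{\neq})_0,\ \langle D_y\rangle^sm^{-1}\th_{02}\big\rangle_{L^2}.
\]

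I would split $\wt{u}_{\neq}\cdot\na_L\th_{\neq}=\wt{u}_{\neq}^1\pa_z\th_{\neq}+\wt{u}_{\neq}^2(\pa_y-t\pa_z)\th_{\neq}$ (only products of two non‑zero modes survive the projection to the zero mode), writing $\cR=\Gamma_1+\Gamma_2$ accordingly. For $\Gamma_1$, coming from $\wt{u}_{\neq}^1\pa_z\th_{\neq}$, I would insert the pointwise bounds \eqref{u1} for $\wh{\wt{u}}^1_{\neq}$ and \eqref{th neq} for $\wh{\pa_z\th_{\neq}}$ together with $m\approx1$ and $\langle\eta\rangle^s\lesssim\langle\eta-\xi\rangle^s+\langle\xi\rangle^s$; splitting the frequency so that one non‑zero factor is measured by the enhanced‑dissipation weight $\||D_z|^{\f13}\cA X_i\|_{L^2}$ and the other by $\|\pa_z(-\D_L)^{-\f12}\cA X_i\|_{L^2}$, then using $\|\langle D_y\rangle^sm^{-1}\th_{02}\|_{L^2}\lesssim\ep\nu^{\f34}$, Young's inequality gives the contribution $C\ep\nu^{\f{7}{12}}(\mathfrak{ED})^{\f12}(\mathfrak{CK}_2)^{\f12}$, while the lowest‑frequency residual, where both non‑zero factors are forced near $\xi\approx0$ and extra time decay of the non‑zero modes is available, is crudely bounded by $C\ep^3\nu^{\f74}\langle t\rangle^{-1}e^{-c\nu^{\f13}t}$.

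The substance is $\Gamma_2$, coming from $\wt{u}_{\neq}^2(\pa_y-t\pa_z)\th_{\neq}$, which I would decompose by frequency into a low-high part $(\wt{u}_{\neq}^2)_{\text{low}}\big((\pa_y-t\pa_z)\th_{\neq}\big)_{\text{high}}$, a high-low part $(\wt{u}_{\neq}^2)_{\text{high}}\big((\pa_y-t\pa_z)\th_{\neq}\big)_{\text{low}}$, and a frequency‑balanced remainder handled as in $\Gamma_1$. For the low-high part I would follow the toy‑model computation of Section~\ref{optimality}: $\wt{u}_{\neq}^2$ being localized at $y$‑frequencies $\lesssim|kt|$ contributes a factor $\langle t\rangle^{-\f32}$ via \eqref{u2}, and the resulting $|\pa_y|^{\f12}$‑type derivative loss on the high factor $\th_{\neq}$ is paid by one quarter‑power of the diffusion $\mathfrak{D}$ together with $\|\cA X_i\|_{L^2}\lesssim\ep\nu^{\f12}$ and $\|\langle D_y\rangle^sm^{-1}\th_{02}\|_{L^2}\lesssim\ep\nu^{\f34}$, giving $C\ep^{\f52}\nu^{\f54}\langle t\rangle^{-\f32}\mathfrak{D}^{\f14}$. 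For the high-low part --- the genuinely technical case --- fix $\eta$ with $|\eta|\geq3$: for $t\in[t(\eta),2|\eta|]$ there is a unique $k$ with $k\eta>0$ and $t\in I_{k,\eta}$, and on this window the $k$‑th Fourier mode of $\wt{u}_{\neq}^2$ at $y$‑frequency $\xi\approx\eta$ drives a fast growth of $\th_{02}$. The multiplier $m$ is built so that on $I_{k,\eta}$ the resonant kernel $k^{-1}\big(1+(t-\f{\eta}{k})^2\big)^{-\f34}$ produced by \eqref{u2} factors as $\sqrt{\f{\pa_tm}{m}}$ times a residual $\big(1+(t-\f{\eta}{k})^2\big)^{-\f38}$ that is absorbed by $\mathfrak{CK}_3$; extracting one factor $\sqrt{\f{\pa_tm}{m}}\,\langle D_y\rangle^sm^{-1}\th_{02}$ then yields the last term $C\ep\nu^{\f13}(\mathfrak{CK}_3)^{\f12}\big\|\sqrt{\f{\pa_tm}{m}}\,m^{-1}\th_{02}\big\|_{H^s}$. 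Outside $\bigcup_kI_{k,\eta}$ (in particular $t\geq2|\eta|$ and $t\leq t(\eta)$, where $m$ is constant in $t$) the interaction is non‑resonant and absorbs into $\mathfrak{D}$ and $\mathfrak{CK}_2$.

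I expect the main obstacle to be this high-low resonant estimate: one must localize $\xi\approx\eta$ on $I_{k,\eta}$, control the mismatch between $m^{-1}(t,\eta)$ and $m^{-1}(t,\xi)$ --- small because $\wt{u}_{\neq}^2$ carries the small frequency $\eta-\xi$, via a commutator bound for $m$ analogous to Lemma~\ref{M lem} --- and verify that $\sqrt{\f{\pa_tm}{m}}\gtrsim k^{-1}\big(1+(t-\f{\eta}{k})^2\big)^{-\f38}$ on $I_{k,\eta}$ uniformly in $k$ and $\eta$, so that precisely one factor $\sqrt{\f{\pa_tm}{m}}$ can be pulled out while keeping the sum over $k$ finite (this is where the $k^{-2}$ decay, and the bound $1/m\lesssim\prod_ke^{1/k^2}\lesssim1$ behind $m\approx1$, enter). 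The remaining ingredients are routine: Young's inequality, the interpolation inequalities among $\|\na_L\cA X_i\|_{L^2}$, $\||D_z|^{\f13}\cA X_i\|_{L^2}$ and $\|\pa_z(-\D_L)^{-\f12}\cA X_i\|_{L^2}$, and insertion of the bootstrap hypotheses \eqref{boot1}--\eqref{boot th}.
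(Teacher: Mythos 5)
Your outline follows the paper's own proof in all essentials: the weighted energy identity with $\langle D_y\rangle^sm^{-1}$ producing the $\sqrt{\pa_tm/m}$ term, the split of the nonlinearity into $\Gamma_1$ (from $\wt{u}^1_{\neq}\pa_z\th_{\neq}$) and $\Gamma_2$ (from $\wt{u}^2_{\neq}(\pa_y-t\pa_z)\th_{\neq}$), the $\mathfrak{ED}$--$\mathfrak{CK}_2$ treatment of $\Gamma_1$, the low--high part of $\Gamma_2$ handled exactly as in the toy model of Section \ref{optimality} (giving $C\ep^{\f52}\nu^{\f54}\langle t\rangle^{-\f32}\mathfrak{D}^{\f14}$), and the resonant high--low part handled by factoring $k^{-1}(1+(t-\f{\eta}{k})^2)^{-\f34}=\sqrt{\pa_tm/m}\cdot(1+(t-\f{\eta}{k})^2)^{-\f38}$ on $I_{k,\eta}$, pairing the residual with $\mathfrak{CK}_3$ and paying $t^{\f12}e^{-c\nu^{\f13}t}\lesssim\nu^{-\f16}$ to reach $C\ep\nu^{\f13}(\mathfrak{CK}_3)^{\f12}\|\sqrt{\pa_tm/m}\,m^{-1}\th_{02}\|_{H^s}$. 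Two details are off, though only one matters. First, in the high--low case it is $\wt{u}^2_{\neq}$ that carries the \emph{high} frequency $\xi\approx\eta$ (that is precisely why the resonance at $t\approx\eta/k$ is dangerous); your parenthetical says the opposite, and no commutator bound for $m$ is needed anywhere, since $m$ weights only the output zero-mode frequency $\eta$ and $m\approx1$ is all that is used.

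The genuine issue is your claim that the non-resonant part of the high--low interaction "absorbs into $\mathfrak{D}$ and $\mathfrak{CK}_2$". Those quantities do not appear on the right-hand side of the lemma, and the absorption is also quantitatively too lossy: pairing the $\wt{u}^2$-factor with $(\mathfrak{CK}_2)^{\f12}$ and converting the $(\pa_y-t\pa_z)$ loss on the low factor into $\|\na_L\cA X_2\|_{L^2}\lesssim\nu^{-\f12}\mathfrak{D}^{\f12}$ yields at best $C\ep\nu^{\f14}(\mathfrak{D}+\mathfrak{CK}_2)$, which integrates to $\ep^3\nu^{\f54}$ and destroys the $(\ep\nu^{\f34})^2$ bootstrap for $\th_{02}$ that the lemma is designed to close. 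What actually makes this piece easy is the off-resonance decay: when $t\geq2|\eta|$, $t<t(\eta)$, $|k|>|\eta|$, or $t\in I_{j,\eta}$ with the interacting mode $k\neq j$ (a case your "outside $\bigcup_kI_{k,\eta}$" dichotomy does not cover), one has $(k^2+(\xi-kt)^2)^{-\f34}\lesssim\langle t\rangle^{-\f32}\langle k,\eta-\xi\rangle^{\f32}|k|^{-\f32}$, and together with $(k^2+(\eta-\xi+kt)^2)^{\f14}\lesssim\langle t\rangle^{\f12}\langle k,\eta-\xi\rangle^{\f12}$ and the crude bootstrap bounds $\|\cA X_1\|_{L^2}\|\cA X_2\|_{L^2}\|m^{-1}\th_{02}\|_{H^s}\lesssim\ep^3\nu^{\f74}$ this is exactly the origin of the stated term $C\ep^3\nu^{\f74}\langle t\rangle^{-1}e^{-c\nu^{\f13}t}$ (which in the paper comes from these non-resonant pieces of $\Gamma_2$, not from a low-frequency residual of $\Gamma_1$ --- $\Gamma_1$ is covered entirely by the $\mathfrak{ED}$--$\mathfrak{CK}_2$ term, though that relabeling is harmless). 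With this correction your argument coincides with the paper's.
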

\begin{proof}
From \eqref{eq: th02}, by applying $\langle D_y\rangle^s$ to $\eqref{eq: th02}_1$ and multiplying the equation by $\f{1}{m^2}\langle D_y\rangle^s\th_{02}$,  and then integrating over $\bbR$, we have that,
\begin{align}\label{Gamma12}
&\f{d}{dt}\Big\|\langle D_y\rangle^sm^{-1}\th_{02}\Big\|_{L^2}^2+2\nu\Big\|\pa_y\langle D_y\rangle^sm^{-1}\th_{02}\Big\|_{L^2}^2+2\Big\|\sqrt{\f{\pa_tm}{m}}\langle D_y\rangle^sm^{-1}\th_{02}\Big\|_{L^2}^2\nonumber\\
&\nonumber=-2\Big\langle\langle D_y\rangle^s\f{1}{m}(\wt{u}^1_{\neq}\pa_z\th_{\neq})_0,\langle D_y\rangle^sm^{-1}\th_{02}\Big\rangle_{L^2}\\
&\quad -2\Big\langle\langle D_y\rangle^s\f{1}{m}(\wt{u}^2_{\neq}(\pa_y-t\pa_z)\th_{\neq})_0,\langle D_y\rangle^sm^{-1}\th_{02}\Big\rangle_{L^2}\nonumber\\
&\eqdef\Gamma_1+\Gamma_2.
 \end{align}
 
On one hand, by \eqref{u1} and \eqref{th neq},  we obtain that by the interpolation inequality and the bootstrap hypotheses,
\begin{align}\label{cL1}
\Gamma_1&\lesssim\sum_k\int_{\xi,\eta}\f{\langle \eta\rangle^s}{m(t,\eta)}(k^2+(\xi-kt)^2)^{-\f14}\big|\wh{X}_1(t,k,\xi)\big|(k^2+(\eta-\xi+kt)^2)^{-\f14}\nonumber\\
&\quad\times\Big(\big|\wh{X}_1(t,-k,\eta-\xi)\big|+\big|\wh{X}_2(t,-k,\eta-\xi)\big|\Big)\langle \eta\rangle^s\f{|\wh{\th}_0(t,\eta)|}{m(t,\eta)}d\xi d\eta\nonumber\\
&\lesssim\|(-\D_L)^{-\f14}\cA X_1\|_{L^2}\|\langle D_y\rangle^sm^{-1}\th_{02}\|_{L^2}\Big(\|(-\D_L)^{-\f14}\cA X_2\|_{L^2}+\|(-\D_L)^{-\f14}\cA X_1\|_{L^2}\Big)\nonumber\\
&\lesssim\|(-\D_L)^{-\f12}\cA X_1\|_{L^2}^{\f12}\|\cA X_1\|_{L^2}^{\f12}\|(-\D_L)^{-\f12}\cA X_2\|_{L^2}^{\f12}\|\cA X_2\|_{L^2}^{\f12}\|\langle D_y\rangle^sm^{-1}\th_{02}\|_{L^2}\nonumber\\
&\quad+\|(-\D_L)^{-\f12}\cA X_1\|_{L^2}\|\cA X_1\|_{L^2}\|\langle D_y\rangle^sm^{-1}\th_{02}\|_{L^2}\nonumber\\
&\leq C\ep\nu^{\f{7}{12}}(\mathfrak{ED})^{\f12}(\mathfrak{CK}_2)^{\f12}.
\end{align}

On the other hand, by \eqref{u2} and \eqref{th neq}, we have
\begin{align*}
\Gamma_2&\lesssim\int_{\xi,\eta}\sum_{k}\f{\langle \eta\rangle^s}{m(t,\eta)}(k^2+(\xi-kt)^2)^{-\f34}|\wh{X}_1(t,k,\xi)|(k^2+(\eta-\xi+kt)^2)^{\f14}\\
&\quad\times|\wh{X}_2(t,-k,\eta-\xi)|\langle \eta\rangle^s\f{|\wh{\th}_{02}(t,\eta)|}{m(t,\eta)}d\xi d\eta\\
&\quad+\int_{\xi,\eta}\sum_{k}\f{\langle \eta\rangle^s}{m(t,\eta)}(k^2+(\xi-kt)^2)^{-\f34}|\wh{X}_1(t,k,\xi)|(k^2+(\eta-\xi+kt)^2)^{\f14}\\
&\quad\times|\wh{X}_1(t,-k,\eta-\xi)|\langle \eta\rangle^s\f{|\wh{\th}_{02}(t,\eta)|}{m(t,\eta)}d\xi d\eta\\
&\eqdef K_1+K_2.
\end{align*}
In the following, we only need to estimate $K_1$, and $K_2$ can be similarly estimated. 

Due to 
we have
\begin{align*}
|K_1|&\lesssim\int_{\xi,\eta}\sum_k\mathds{1}_{|k|>|\eta|}\f{\langle \eta\rangle^s}{m(t,\eta)}(k^2+(\xi-kt)^2)^{-\f34}|\wh{X}_1(t,k,\xi)|\\
&\quad\times(k^2+(\eta-\xi+kt)^2)^{\f14}|\wh{X}_2(t,-k,\eta-\xi)|\langle \eta\rangle^s\f{|\wh{\th}_{02}(t,\eta)|}{m(t,\eta)}d\xi d\eta\\
&\quad+\int_{\xi,\eta}\sum_k\mathds{1}_{|k|\leq|\eta|}\f{\langle \eta\rangle^s}{m(t,\eta)}(k^2+(\xi-kt)^2)^{-\f34}|\wh{X}_1(t,k,\xi)|\\
&\quad\times(k^2+(\eta-\xi+kt)^2)^{\f14}|\wh{X}_2(t,-k,\eta-\xi)|\langle \eta\rangle^s\f{|\wh{\th}_{02}(t,\eta)|}{m(t,\eta)}d\xi d\eta\\
&\eqdef K_{11}+K_{12}.
\end{align*}
For $K_{11}$, by using the fact that 
\begin{align}\label{000}
(k^2+(\eta-\xi+kt)^2)^{\f14}\lesssim \langle t\rangle^{\f12}\langle k,\eta-\xi\rangle^{\f12},
\end{align}
and for $|k|>|\eta|$,
\begin{align*}
(k^2+(\xi-kt)^2)^{-\f34}&\lesssim(k^2+(\eta-kt)^2)^{-\f34}\f{\langle k,\eta-\xi\rangle^{\f32}}{|k|^{\f32}}\lesssim \langle t\rangle^{-\f32}\f{\langle k,\eta-\xi\rangle^{\f32}}{|k|^3},
\end{align*}
and 
by Young's inequality and the bootstrap hypotheses, we have that for $s\geq 6$,
\begin{align}\label{K11}
|K_{11}|&\lesssim\int_{\xi,\eta}\sum_k\mathds{1}_{|k|>|\eta|}\f{\langle k\rangle^s}{m(t,\eta)}|\wh{X}_1(t,k,\xi)|\f{\langle k,\eta-\xi\rangle^2}{|k|^3}\langle t\rangle^{-1}\nonumber\\
&\quad\times|\wh{X}_2(t,-k,\eta-\xi)|\langle \eta\rangle^s\f{|\wh{\th}_{02}(t,\eta)|}{m(t,\eta)}d\xi d\eta\nonumber\\
&\lesssim\langle t\rangle^{-1}\|X_1\|_{H^{\f{s}{2}+2}}\|X_2\|_{H^{\f{s}{2}+3}}\|\langle D_y\rangle^sm^{-1}\th_{02}\|_{L^2}\nonumber\\
&\lesssim\langle t\rangle^{-1}e^{-c\nu^{\f13}t}\|\cA X_1\|_{L^2}\|\cA X_2\|_{L^2}\|m^{-1}\th_{02}\|_{H^s}\nonumber\\
&\leq C\ep^3\nu^{\f74}\langle t\rangle^{-1}e^{-c\nu^{\f13}t}.
\end{align}

For $K_{12}$, we need to estimate it more carefully. By the high and low frequency decomposition, we have
\begin{align*}
|K_{12}|&\lesssim\int_{\xi,\eta}\mathds{1}_{|\xi|\leq|\eta-\xi|}\sum_k\mathds{1}_{|k|\leq|\eta|}\f{\langle \eta\rangle^s}{m(t,\eta)}(k^2+(\xi-kt)^2)^{-\f34}|\wh{X}_1(t,k,\xi)|\\
&\quad\times(k^2+(\eta-\xi+kt)^2)^{\f14}|\wh{X}_2(t,-k,\eta-\xi)|\langle \eta\rangle^s\f{|\wh{\th}_{02}(t,\eta)|}{m(t,\eta)}d\xi d\eta\\
&\quad+\int_{\xi,\eta}\mathds{1}_{|\xi|>|\eta-\xi|}\sum_k\mathds{1}_{|k|\leq|\eta|}\f{\langle \eta\rangle^s}{m(t,\eta)}(k^2+(\xi-kt)^2)^{-\f34}|\wh{X}_1(t,k,\xi)|\\
&\quad\times(k^2+(\eta-\xi+kt)^2)^{\f14}|\wh{X}_2(t,-k,\eta-\xi)|\langle \eta\rangle^s\f{|\wh{\th}_{02}(t,\eta)|}{m(t,\eta)}d\xi d\eta\\
&\eqdef K_{121}+K_{122}.
\end{align*}

For $K_{121}$, by using the fact that $\langle \eta\rangle^s\leq \langle k,\eta-\xi\rangle^s+\langle k,\xi\rangle^s\leq\langle k, \eta-\xi\rangle^s$, Young's inequality and 
\begin{align*}
\|(-\D_L)^{-\f34}X_1\|_{H^2}\lesssim\langle t\rangle^{-\f32}\|X_1\|_{H^{\f72}},
\end{align*}
we obtain that  by the interpolation inequality and the bootstrap hypotheses, 
\begin{align}\label{K121}
|K_{121}|
&\lesssim\|(-\D_L)^{-\f34}X_1\|_{H^2}\|(-\D_L)^{\f14}\cA X_2\|_{L^2}\|m^{-1}\th_{02}\|_{H^s}\nonumber\\
&\lesssim\langle t\rangle^{-\f32}\|X_1\|_{H^{\f72}}\|\na_L\cA X_2\|_{L^2}^{\f12}\|\cA X_2\|_{L^2}^{\f12}\|m^{-1}\th_{02}\|_{H^s}\nonumber\\
&\lesssim\langle t\rangle^{-\f32}\|\cA X_1\|_{L^2}^{\f32}\|\na_L\cA X_2\|_{L^2}^{\f12}\|m^{-1}\th_{02}\|_{H^s}\nonumber\\
&\leq C\ep^{\f52}\nu^{\f54}\langle t\rangle^{-\f32}\mathfrak{D}^{\f14}.
\end{align}

For $K_{122}$, by using the fact that 
\begin{align*}
(k^2+(\xi-kt)^2)^{-\f34}&\lesssim(k^2+(\eta-kt)^2)^{-\f34}\f{\langle k,\eta-\xi\rangle^{\f32}}{|k|^{\f32}},
\end{align*}
we get that
\begin{align*}
|K_{122}|&\lesssim\int_{\xi,\eta}\sum_{k}\mathds{1}_{t\geq 2|\eta|}\mathds{1}_{|\xi|>|\eta-\xi|}\mathds{1}_{|k|\leq|\eta|}\f{\langle \eta\rangle^s}{m(t,\eta)}(k^2+(\xi-kt)^2)^{-\f34}|\wh{X}_1(t,k,\xi)|\\
&\quad\times(k^2+(\eta-\xi+kt)^2)^{\f14}|\wh{X}_2(t,-k,\eta-\xi)|\langle \eta\rangle^s\f{|\wh{\th}_{02}(t,\eta)|}{m(t,\eta)}d\xi d\eta\\
&\quad+\int_{\xi,\eta}\sum_{k}\mathds{1}_{t(\eta)\leq t<2|\eta|}\mathds{1}_{|\xi|>|\eta-\xi|}\mathds{1}_{|k|\leq|\eta|}\f{\langle \eta\rangle^s}{m(t,\eta)}(k^2+(\xi-kt)^2)^{-\f34}|\wh{X}_1(t,k,\xi)|\\
&\quad\times(k^2+(\eta-\xi+kt)^2)^{\f14}|\wh{X}_2(t,-k,\eta-\xi)|\langle \eta\rangle^s\f{|\wh{\th}_{02}(t,\eta)|}{m(t,\eta)}d\xi d\eta\\
&\quad+\int_{\xi,\eta}\sum_{k}\mathds{1}_{t<t(\eta)}\mathds{1}_{|\xi|>|\eta-\xi|}\mathds{1}_{|k|\leq|\eta|}\f{\langle \eta\rangle^s}{m(t,\eta)}(k^2+(\xi-kt)^2)^{-\f34}|\wh{X}_1(t,k,\xi)|\\
&\quad\times(k^2+(\eta-\xi+kt)^2)^{\f14}|\wh{X}_2(t,-k,\eta-\xi)|\langle \eta\rangle^s\f{|\wh{\th}_{02}(t,\eta)|}{m(t,\eta)}d\xi d\eta\\
&\eqdef\cK_1+\cK_2+\cK_3.
\end{align*}

 For $\cK_1$, by using $\langle \eta\rangle^s\leq\langle k,\eta\rangle^s\leq\langle k,\eta-\xi\rangle^s+\langle k,\xi\rangle^s\leq\langle k, \xi\rangle^s$, \eqref{000} and
\begin{align*}
(k^2+(\xi-kt)^2)^{-\f34}&\lesssim(k^2+(\eta-kt)^2)^{-\f34}\f{(k^2+(\eta-kt)^2)^{\f34}}{(k^2+(\xi-kt)^2)^{\f34}}\\
&\lesssim(k^2+(\eta-kt)^2)^{-\f34}\f{\langle k,\eta-\xi\rangle^{\f32}}{|k|^{\f32}},
\end{align*}
we get that by the interpolation inequality and the bootstrap hypotheses,
\begin{align}\label{cK1}
|\cK_1|&\lesssim\int_{\xi,\eta}\sum_{k}\mathds{1}_{t\geq 2|\eta|}\mathds{1}_{|\xi|>|\eta-\xi|}\mathds{1}_{|k|\leq|\eta|}\f{\langle k,\xi\rangle^s}{m(t,\eta)}(k^2+(\eta-kt)^2)^{-\f34}\f{\langle k,\eta-\xi\rangle^{\f32}}{|k|^{\f32}}|\wh{X}_1(t,k,\xi)|\nonumber\\
&\quad\times(k^2+(\eta-\xi+kt)^2)^{\f14}|\wh{X}_2(t,-k,\eta-\xi)|\langle \eta\rangle^s\f{|\wh{\th}_{02}(t,\eta)|}{m(t,\eta)}d\xi d\eta\nonumber\\
&\lesssim\int_{\xi,\eta}\sum_{k}\mathds{1}_{t\geq 2|\eta|}\mathds{1}_{|\xi|>|\eta-\xi|}\mathds{1}_{|k|\leq|\eta|}\langle t\rangle^{-\f32}\langle k,\xi\rangle^s|\wh{X}_1(t,k,\xi)|\langle t\rangle^{\f12}\f{\langle k,\eta-\xi\rangle^{2}}{|k|^{\f32}}\nonumber\\
&\quad\times|\wh{X}_2(t,-k,\eta-\xi)|\langle \eta\rangle^s\f{|\wh{\th}_{02}(t,\eta)|}{m(t,\eta)}d\xi d\eta\nonumber\\
&\lesssim\langle t\rangle^{-1}\|\cA X_1\|_{H^s}\|X_2\|_{H^4}\|\langle D_y\rangle^sm^{-1}\th_{02}\|_{L^2}\nonumber\\
&\lesssim\langle t\rangle^{-1}e^{-c\nu^{\f13}t}\|\cA X_1\|_{L^2}\|\cA X_2\|_{L^2}\|\langle D_y\rangle^sm^{-1}\th_{02}\|_{L^2}\nonumber\\
&\leq C\ep^3\nu^{\f74}\langle t\rangle^{-1}e^{-c\nu^{\f13}t}.
\end{align}

For $\cK_3$, we have
\begin{align*}
|\cK_3|&\lesssim\int_{\xi,\eta}\sum_{k}\mathds{1}_{t< t(\eta)}\mathds{1}_{|\xi|>|\eta-\xi|}\mathds{1}_{1\leq|k|\leq\f{1}{10}\sqrt{|\eta|}}\f{\langle \eta\rangle^s}{m(t,\eta)}(k^2+(\xi-kt)^2)^{-\f34}\\
&\quad\times|\wh{X}_1(t,k,\xi)|(k^2+(\eta-\xi+kt)^2)^{\f14}|\wh{X}_2(t,-k,\eta-\xi)|\langle \eta\rangle^s\f{|\wh{\th}_{02}(t,\eta)|}{m(t,\eta)}d\xi d\eta\\
&\quad+\int_{\xi,\eta}\sum_{k}\mathds{1}_{t< t(\eta)}\mathds{1}_{|\xi|>|\eta-\xi|}\mathds{1}_{\f{1}{10}\sqrt{|\eta|}<|k|\leq|\eta|}\f{\langle \eta\rangle^s}{m(t,\eta)}(k^2+(\xi-kt)^2)^{-\f34}\\
&\quad\times|\wh{X}_1(t,k,\xi)|(k^2+(\eta-\xi+kt)^2)^{\f14}|\wh{X}_2(t,-k,\eta-\xi)|\langle \eta\rangle^s\f{|\wh{\th}_{02}(t,\eta)|}{m(t,\eta)}d\xi d\eta\\
&\eqdef \cK_{31}+\cK_{32}.
\end{align*}

For $1\leq|k|\leq\f{1}{10}\sqrt{|\eta|}$, and then by $|\eta-kt|\geq\f12|\eta|\gtrsim\langle t\rangle^2$, we have 
\begin{align*}
(k^2+(\xi-kt)^2)^{-\f34}&\lesssim(k^2+(\eta-kt)^2)^{-\f34}\f{\langle k,\eta-\xi\rangle^{\f32}}{|k|^{\f32}}\\
&\lesssim\f{\langle k,\eta-\xi\rangle^{\f32}}{|\eta-kt|^{\f32}}\lesssim\f{\langle k,\eta-\xi\rangle^{\f32}}{\langle t\rangle^{3}}.
\end{align*}
And then, by using $\langle \eta\rangle^s\lesssim \langle k,\xi\rangle^s$, \eqref{000} and  the bootstrap hypotheses, we obtain 
\begin{align*}
|\cK_{31}|
&\lesssim
\int_{\xi,\eta}\sum_{k}\mathds{1}_{t< t(\eta)}\mathds{1}_{|\xi|>|\eta-\xi|}\mathds{1}_{1\leq|k|\leq\f{1}{10}\sqrt{|\eta|}}\f{\langle k,\xi\rangle^s}{m(t,\eta)}\wh{X}_1(t,k,\xi)\\
&\quad\times \langle t\rangle^{-\f52}\langle k,\eta-\xi\rangle^2|\wh{X}_2(t,-k,\eta-\xi)|\langle \eta\rangle^s\f{|\wh{\th}_{02}(t,\eta)|}{m(t,\eta)}d\xi d\eta\\
&\lesssim \langle t\rangle^{-\f52}e^{-c\nu^{\f13}t}\|\cA X_1\|_{L^2}\|X_2\|_{H^4}\|\langle D_y\rangle^sm^{-1}\th_{02}\|_{L^2}\\
&\lesssim \langle t\rangle^{-\f52}e^{-c\nu^{\f13}t}\|\cA X_1\|_{L^2}\|\cA X_2\|_{L^2}\|\langle D_y\rangle^sm^{-1}\th_{02}\|_{L^2}\\
&\leq C\ep^3\nu^{\f74}\langle t\rangle^{-\f52}e^{-c\nu^{\f13}t}.
\end{align*} 

For $\f{1}{10}\sqrt{|\eta|}<|k|\leq|\eta|$, by using the fact that $\langle \eta\rangle^s\lesssim \langle k,\xi\rangle^s$, \eqref{000} and 
\begin{align*}
(k^2+(\xi-kt)^2)^{-\f34}\lesssim k^{-\f32}\lesssim \langle t\rangle^{-\f32},
\end{align*}
we obtain that  by Young's inequality and the bootstrap hypotheses, 
\begin{align*}
|\cK_{31}|
&\lesssim\int_{\xi,\eta}\sum_{k}\mathds{1}_{t< t(\eta)}\mathds{1}_{|\xi|>|\eta-\xi|}\mathds{1}_{\f{1}{10}\sqrt{|\eta|}<|k|<|\eta|}\f{\langle k,\xi\rangle^s}{m(t,\eta)}|\wh{X}_1(t,k,\xi)| \langle t\rangle^{-1}\\
&\quad\times\langle k,\eta-\xi\rangle^{\f12}|\wh{X}_2(t,-k,\eta-\xi)|\langle \eta\rangle^s\f{|\wh{\th}_{02}(t,\eta)|}{m(t,\eta)}d\xi d\eta\\
&\lesssim \langle t\rangle^{-1}e^{-c\nu^{\f13}t}\|\cA X_1\|_{L^2}\|X_2\|_{H^{\f52}}\|\langle D_y\rangle^sm^{-1}\th_{02}\|_{L^2}\\
&\lesssim \langle t\rangle^{-1}e^{-c\nu^{\f13}t}\|\cA X_1\|_{L^2}\|\cA X_2\|_{L^2}\|\langle D_y\rangle^sm^{-1}\th_{02}\|_{L^2}\\
&\leq C\ep^3\nu^{\f74}\langle t\rangle^{-1}e^{-c\nu^{\f13}t}.
\end{align*} 

Thus, we get that 
\begin{align}\label{cK3}
|\cK_3|\leq C\ep^3\nu^{\f74}\langle t\rangle^{-\f52}e^{-c\nu^{\f13}t}+C\ep^3\nu^{\f74}\langle t\rangle^{-1}e^{-c\nu^{\f13}t}\leq C\ep^3\nu^{\f74}\langle t\rangle^{-1}e^{-c\nu^{\f13}t}.
\end{align}

Finally, we estimate the term $\cK_2$. For $t(\eta)\leq t\leq 2|\eta|$, we have $t\in[\f{2|\eta|}{2j+1},\f{2|\eta|}{2j-1}]$ for some $j\in[1,E(\sqrt{|\eta|})]$.  And then, for $\cK_2$, we have
\begin{align*}
|\cK_2|&\lesssim\int_{\xi,\eta}\sum_{k\neq j}\mathds{1}_{t(\eta)\leq t\leq 2|\eta|}\mathds{1}_{t\in I_{j,\eta}}\mathds{1}_{|\xi|>|\eta-\xi|}\mathds{1}_{|k|\leq|\eta|}\f{\langle k,\xi\rangle^s}{m(t,\eta)}(k^2+(\eta-kt)^2)^{-\f34}\\
&\quad\times\langle k,\eta-\xi\rangle^{\f32}|\wh{X}_1(t,k,\xi)|(k^2+(\eta-\xi+kt)^2)^{\f14}|\wh{X}_2(t,-k,\eta-\xi)|\langle \eta\rangle^s\f{|\wh{\th}_{02}(t,\eta)|}{m(t,\eta)}d\xi d\eta\\
&\quad+\int_{\xi,\eta}\sum_{j=1}^{E(\sqrt{|\eta|})}\mathds{1}_{t\in I_{j,\eta}}\mathds{1}_{|\xi|>|\eta-\xi|}\mathds{1}_{|k|\leq|\eta|}\f{\langle k,\xi\rangle^s}{m(t,\eta)}(k^2+(\eta-kt)^2)^{-\f34}\\
&\quad\times\langle k,\eta-\xi\rangle^{\f32}|\wh{X}_1(t,k,\xi)|(k^2+(\eta-\xi+kt)^2)^{\f14}|\wh{X}_2(t,-k,\eta-\xi)|\langle \eta\rangle^s\f{|\wh{\th}_{02}(t,\eta)|}{m(t,\eta)}d\xi d\eta\\
&\eqdef \cK_{21}+\cK_{22}.
\end{align*}

When $j\neq k$, we have $|t-\f{\eta}{k}|\gtrsim \f{|\eta|}{j^2}$, and for $1\leq |j|\leq 4|k|$,
\begin{align*}
(k^2+(\eta-kt)^2)^{-\f34}\lesssim k^{-\f32}(1+\f{|\eta|^2}{j^4})^{-\f34}
\lesssim k^{-\f32}(1+\f{t^2}{k^2})^{-\f34}\lesssim\langle t\rangle^{-\f32},
\end{align*}
and for $|j|>4|k|$, by using $|\f{\eta}{k}-t|\geq \f{2|\eta|}{2j-1}\geq t$, we have
\begin{align*}
(k^2+(\eta-kt)^2)^{-\f34}&=k^{-\f32}(1+(t-\f{\eta}{k})^2)^{-\f34}\\&\lesssim k^{-\f32}(1+t^2)^{-\f34}\lesssim k^{-\f32}\langle t\rangle^{-\f32}.
\end{align*}
Thus, for $\cK_{21}$, we get that
\begin{align*}
|\cK_{21}|&\lesssim\int_{\xi,\eta}\sum_{k\neq j}\mathds{1}_{t(\eta)\leq t\leq 2|\eta|}\mathds{1}_{t\in I_{j,\eta}}\mathds{1}_{|\xi|>|\eta-\xi|}\mathds{1}_{|k|\leq|\eta|}\langle k,\xi\rangle^s|\wh{X}_1(t,k,\xi)|\langle t\rangle^{-1}\langle k,\eta-\xi\rangle^{2}\\
&\quad\times|\wh{X}_2(t,-k,\eta-\xi)|\langle \eta\rangle^s\f{|\wh{\th}_{02}(t,\eta)|}{m(t,\eta)}d\xi d\eta\\
&\lesssim\langle t\rangle^{-1}e^{-c\nu^{\f13}t}\|\cA X_1\|_{L^2}\|X_2\|_{H^4}\|\langle D_y\rangle^sm^{-1}\th_{02}\|_{L^2}\\
&\lesssim\langle t\rangle^{-1}e^{-c\nu^{\f13}t}\|\cA X_1\|_{L^2}\|\cA X_2\|_{L^2}\|\langle D_y\rangle^sm^{-1}\th_{02}\|_{L^2}\\
&\leq C\ep^3\nu^{\f74}\langle t\rangle^{-1}e^{-c\nu^{\f13}t}.
\end{align*}

When $j=k$, we have
\begin{align*}
|\cK_{22}|&\lesssim\int_{\xi,\eta}\sum_{j=1}^{E(\sqrt{|\eta|})}\mathds{1}_{t\in I_{j,\eta}}\mathds{1}_{|\xi|>|\eta-\xi|}\mathds{1}_{|j|\leq|\eta|}\f{\langle j,\xi\rangle^s}{m(t,\eta)}(j^2+(\xi-jt)^2)^{-\f38}|\wh{X}_1(t,j,\xi)|\f{\langle j,\eta-\xi\rangle^{\f34}}{|j|^{\f32}}\\
&\quad\times(j^2+(\eta-\xi+jt)^2)^{\f14}|\wh{X}_2(t,-j,\eta-\xi)|\big(1+(t-\f{\eta}{j})^2\big)^{-\f38}\langle \eta\rangle^s\f{|\wh{\th}_{02}(t,\eta)|}{m(t,\eta)}d\xi d\eta,
\end{align*}
and then, we get that by using the definition of the multiplier $m$ and the bootstrap hypotheses,
\begin{align*}
|\cK_{22}|&\lesssim\int_{\xi,\eta}\sum_{j=1}^{E(\sqrt{|\eta|})}\mathds{1}_{t\in I_{j,\eta}}\mathds{1}_{|\xi|>|\eta-\xi|}\mathds{1}_{|j|\leq|\eta|}\f{\langle j,\xi\rangle^s}{m(t,\eta)}(j^2+(\xi-jt)^2)^{-\f38}|\wh{X}_1(t,j,\xi)|\f{\langle j,\eta-\xi\rangle^{\f34}}{|j|^{\f12}}\\
&\quad\times(j^2+(\eta-\xi+jt)^2)^{\f14}|\wh{X}_2(t,-j,\eta-\xi)||j|^{-1}\big(1+(t-\f{\eta}{j})^2\big)^{-\f38}\langle \eta\rangle^s\f{|\wh{\th}_{02}(t,\eta)|}{m(t,\eta)}d\xi d\eta\\
&\lesssim\int_{\xi,\eta}\sum_{j=1}^{E(\sqrt{|\eta|})}\mathds{1}_{t\in I_{j,\eta}}\mathds{1}_{|\xi|>|\eta-\xi|}\mathds{1}_{|j|\leq|\eta|}\f{\langle j,\xi\rangle^s}{m(t,\eta)}(j^2+(\xi-jt)^2)^{-\f38}|\wh{X}_1(t,j,\xi)|\\
&\quad\times t^{\f12}\langle j,\eta-\xi\rangle^{\f54}|\wh{X}_2(t,-j,\eta-\xi)|\sqrt{\f{\pa_tm_j}{m_j}}(t,j,\eta)\langle \eta\rangle^s\f{|\wh{\th}_{02}(t,\eta)|}{m(t,\eta)}d\xi d\eta\\
&\lesssim t^{\f12}e^{-c\nu^{\f13}t}\Big\|\sqrt{\f{\pa_tm}{m}}m^{-1}\th_{02}\Big\|_{H^s}\|(-\D_L)^{-\f38}\cA X_2\|_{L^2}\|X_1\|_{H^4}\\
&\lesssim t^{\f12}e^{-c\nu^{\f13}t}\Big\|\sqrt{\f{\pa_tm}{m}}m^{-1}\th_{02}\Big\|_{H^s}\|(-\D_L)^{-\f38}\cA X_2\|_{L^2}\|\cA X_1\|_{L^2}\\
&\leq C\ep\nu^{\f13}(\mathfrak{CK}_3)^{\f12}\Big\|\sqrt{\f{\pa_tm}{m}}m^{-1}\th_{02}\Big\|_{H^s}.
\end{align*}
Thus, we get
\begin{align}\label{cK2}
|\cK_2|\leq C\ep^3\nu^{\f74}\langle t\rangle^{-1}e^{-c\nu^{\f13}t}+C\ep\nu^{\f13}(\mathfrak{CK}_3)^{\f12}\Big\|\sqrt{\f{\pa_tm}{m}}m^{-1}\th_{02}\Big\|_{H^s}
\end{align}

From  \eqref{cL1}, \eqref{K11} - 
\eqref{cK2}, 
we complete the proof of Lemma \ref{lem th02}.
\end{proof}

Finally, we conclude that Proposition \ref{prop: zero} follows directly from Lemma \ref{lem f0}, Lemma \ref{lem u0} and Lemma \ref{lem th02} with $\ep$ small enough.

\section*{Acknowledgements}
C. Zhai's work is supported by a grant from the China Scholarship Council and this work was done when C. Zhai was visiting the center SITE, NYU Abu Dhabi. She appreciates the hospitality of NYU.

\end{CJK*}
\end{document}